 
\documentclass[leqno,12pt]{amsart}

\usepackage{palatino}
\usepackage[mathcal]{euler}

\usepackage{amssymb,amsmath,amsthm,enumerate,slashed}
\usepackage[latin1]{inputenc}
\usepackage[T1]{fontenc}
\usepackage{mathrsfs}
\usepackage[dvipsnames,svgnames,table]{xcolor}
\usepackage{comment}

\definecolor{DarkBlue}{rgb}{0.2,0.2,0.4}
\usepackage[colorlinks=true, linkcolor=DarkBlue, urlcolor= DarkBlue, citecolor=DarkBlue]{hyperref}

\usepackage{graphicx}
\usepackage{epstopdf,epsfig}

\usepackage{xy}	
\xyoption{all}
\usepackage{marginnote}
\usepackage[mmddyyyy]{datetime}

\usepackage[normalem]{ulem}

\setcounter{tocdepth}{2}
\makeatletter
\def\l@subsection{\@tocline{2}{0pt}{2.5pc}{5pc}{}}
\makeatother

\newcommand{\lie}[1]{\mathfrak{#1}}
\newcommand{\compop}{\mathfrak{K}}

\newcommand{\Compacts}{\mathfrak{K}}

\newcommand{\R}{\mathbb{R}}
\newcommand{\C}{\mathbb{C}}

\DeclareMathOperator{\Ind}{Ind}

\DeclareMathOperator{\Ad}{Ad}

\newcommand{\InfCh}{\operatorname{Inf\,Ch}}

\newcommand{\ImInfCh}{\operatorname{Im\,Inf\,Ch}}

\newcommand{\Gtemp}{\widehat{G}_{\mathrm{tempered}}}
\newcommand{\bigG}{\pmb{G}}
\newcommand{\bigN}{\mathbf N}

\swapnumbers
\numberwithin{equation}{subsection}

\theoremstyle{plain}
\newtheorem*{theorem*}{Theorem}
\newtheorem*{lemma*}{Lemma}

\newtheorem{theorem}[equation]{Theorem}
\newtheorem{lemma}[equation]{Lemma}

\newtheorem{corollary}[equation]{Corollary}
\theoremstyle{definition}
\newtheorem{definition}[equation]{Definition}
\newtheorem*{definition*}{Definition}

\newtheorem*{notation*}{Notation}
\newtheorem{remark}[equation]{Remark}
\newtheorem{remarks}[equation]{Remarks}

\newtheorem*{remark*}{Remark}

\def\clap#1{\hbox to 0pt{\hss#1\hss}}


\usepackage{cancel} 

\setlength{\marginparwidth}{25mm}

\title[Mackey Embedding for Reduced Group C*-Algebras]{A Mackey Embedding for Reduced C*-Algebras of Real Reductive Groups}
\author{Pierre Clare \and Nigel Higson \and Angel
Rom\'an}
\address{P. Clare, College of William \& Mary,  Williamsburg, VA, USA}
\address{N. Higson, Penn State University, University Park, PA, USA}
\address{A. Rom\'an, Rochester Institute of Technology,  Rochester, NY, USA}

\date{\today}

\begin{document}

\begin{abstract}
The purpose of this paper is to construct an embedding of the $C^*$-algebra of the Cartan motion group of a real reductive group $G$ into the reduced C*-algebra of   $G$ itself.  The embedding has a number of applications: we shall use it to characterize the Mackey bijection from the tempered dual of $G$ into the unitary dual of the motion group; to characterize the continuous field of reduced group $C^*$-algebras arising from the contraction of $G$ to its Cartan motion group; and to characterize the Connes-Kasparov assembly map in operator $K$-theory. Our results continue and complete a project that was begun several years ago by the last two authors, who considered the case of complex groups.  In the real case, detailed information from the theory of $R$-groups is used in the construction. 
\end{abstract}

\maketitle
 
\section{Introduction}

Let $G$ be a real reductive group and let $K$ be a maximal compact subgroup of $G$.  The \emph{Cartan motion group} associated to $G$ and $K$ is the semidirect product Lie group $G_0{=}K{\ltimes }{\mathfrak{g}/\mathfrak{k}}$.   It is a  first-order approximation to $G$, near $K$, but although it is designed to resemble $G$, its structure is quite different.  Nevertheless,  George Mackey  proposed in \cite{Mackey75} that the  irreducible unitary representations of the  $G_0$   ought to  ``correspond'' to those of the group $G$.  Mackey's proposal initially failed to attract much attention, but thirty years later it was  explored and made precise by the second author for complex semisimple groups \cite{Higson08}, using  the  \emph{tempered} representations of $G$. Quite recently Alexandre Afgoustidis \cite{AfgoustidisMackeyBijection} constructed a  \emph{Mackey bijection} from the tempered dual of \emph{any} real reductive group to the unitary dual of its Cartan motion group.

The purpose of this paper is to construct a \emph{Mackey embedding} of group $C^*$-algebras
\[
\alpha \colon C^*(G_0)\longrightarrow C^*_r(G) ,
\]
that further develops Mackey's idea of a correspondence between the representation theories of the groups $G_0$ and $G$, and that generalizes the construction of a Mackey embedding for complex reductive groups  in \cite{HigsonRoman20}. We shall show  that every irreducible tempered unitary representation $\pi$ of a real reductive group $G$, when viewed as a representation of the reduced group $C^*$-algebra of $G$, pulls back \textit{via} $\alpha$ to a unitary representation of $G_0$ that includes, as a subrepresentation, the irreducible representation of $G_0$ which corresponds to $\pi$ under the Mackey  bijection of Afgoustidis. In fact we shall show that  the \emph{only} bijection with this property is the one that Afgoustidis constructed.

Our Mackey embedding is constructed as follows. First, the groups $G_0$ and $G$ fit into a smooth, one-parameter family of Lie groups $\{ G_t \}_{t\in \R}$ in which $G_t=G $ whenever $t\ne 0$.  The  family may be constructed using the deformation to the normal cone from algebraic geometry \cite{Fulton84}; see Section~\ref{sec-cts-fields}.
Next, we may form  from this smooth family of Lie groups a continuous field of group $C^*$-algebras $\{ C^*_r (G_t)\}_{t\in \R}$.

Now, there is  an interesting  connection between the parameter $t$ in the continuous field $\{ C^*_r (G_t)\}_{t\in \R}$  and a \emph{rescaling} action of the positive real numbers on the tempered dual of $G$. Roughly speaking, the latter is  just the operation 
\[
t\cdot \pi_{\sigma, \nu}= \pi_{\sigma, t^{-1}\nu},
\]
where $t$ is a positive real number, and where $\sigma$ and $\nu$ are the discrete and continuous parameters describing a tempered representation of $G$; see Section~\ref{sec-rescaling-maps-on-tempered-dual}.  We shall show that the rescaling action  on the tempered dual is related to the continuous field by the \emph{limit formula} 
\[
\lim_{t\to \infty} \pi_{\sigma, t^{-1}\nu} (f_t) = \rho_{\sigma,\nu}(f_0),
\]
in which $\{f_t\}_{t\in \R}$ is any continuous section of $\{ C^*_r (G_t)\}_{t\in \R}$ and $\rho_{\sigma,\nu}$ is the natural unitary (but not necessarily  irreducible) representation of the motion group $G_0$ that is constructed from the same parameters $\sigma$ and $\nu$. See Section~\ref{sec-limit-formula}
for details.

In addition to the limit formula, our construction of the Mackey embedding involves a lift of the re\-scaling action on the tempered dual of $G$ to an action by automorphisms on the reduced $C^*$-algebra of $G$,
\[
\alpha_t \colon C^*_r (G) \longrightarrow C^*_r (G) \qquad (t>0).
\]
This is the most difficult part of our construction, and it is here where the gap between the complex case, treated in \cite{HigsonRoman20}, and the real case is the widest.   The rescaling action on the tempered dual of a real reductive group is relatively easy to define, despite the  complicated form of the tempered dual in the real case.  But the existence of non-trivial self-intertwining operators on general cuspidal principal series representations in the real case makes the problem of lifting  the rescaling action to the reduced group $C^*$-algebra much more difficult than it is in the complex case.  Our construction involves a study of the $R$-group, and is carried out in Section~\ref{sec-rescaling-automorphisms}.

Having constructed the rescaling automorphisms, it is a simple matter to define the Mackey embedding by the formula\footnote{The formula presented here ignores a small issue involving choices of Haar measure on the copies $G_t$ of the group $G$.  See Section~\ref{sec-bijection-characterization} for details.}
\[
\alpha(f_0) = \lim_{t\to 0} \alpha_t (f_t),
\]
where $\{ f_t\}$ is any continuous section of the continuous field.  The limit exists thanks to the limit formula.

We shall explain in Section~\ref{sec-bijection-characterization} how to characterize  the continuous field $\{C^*_r (G_t)\}_{t\in \R}$ in terms of the Mackey embedding: it is the so-called \emph{mapping cone field} associated to the embedding. We shall also prove that the Connes-Kasparov isomorphism in operator $K$-theory is equivalent to the assertion that our Mackey embedding induces an isomorphism in $K$-theory.  Finally,   we shall  give the characterization of the Mackey bijection that we have already described above.

\smallskip

\noindent{\bf Acknowledgments.}
The authors thank the referee for many very helpful observations.  
Nigel Higson's research was supported by the NSF grant DMS-1952669, and Angel Rom\'an's research was partially supp\-orted by the NSF grant DMS-2213097. All the authors gratefully acknowledge support of the Institut Henri Poincar\'e (UAR 839 CNRS-Sorbonne Universit\'e), and LabEx CARMIN (ANR-10-LABX-59-01).

\section{Deformation spaces and continuous fields}
\label{sec-cts-fields}

In this section we shall record  some   facts about the deformation to the normal cone construction in geometry, and about the associated continuous field of $C^*$-algebras in the group case. Essentially the same background material appears in  \cite{HigsonRoman20}, so we shall be brief.

\subsection{Deformation to the normal cone}
\label{sec-dnc}
Let $M\longrightarrow V$ be a closed embedding   of  smooth manifolds. The associated \emph{deformation to the normal cone}  $\bigN_VM$  is a smooth manifold that is equipped with a submersion 
\begin{equation}
    \label{eq-submersion-from-def-to-normal-cone-to-r}
\bigN_VM \longrightarrow \R.
\end{equation}
The fiber of this submersion  over any  $t\ne 0$ is a copy of $V$, while the fiber over $t=0$ is  a copy of the normal bundle 
\[
N_V M = TV\vert _M \,\big / \,TM.
\]
So as a set, $\bigN _V M$ is the disjoint union  
\[
\bigN _VM =N_V M {\times} \{0\} \, \, \sqcup\,\,  \bigsqcup_{t\ne 0}  V{\times} \{ t\} .
\]
But we equip the disjoint union with the unique smooth manifold structure for which
\begin{enumerate}[\rm (i)]

\item the  map $\bigN _VM \to \R$ in \eqref{eq-submersion-from-def-to-normal-cone-to-r}  is a smooth submersion,

\item if $f$ is a smooth function  on $V$, then the function 
\[
\begin{cases} (v,t) \longmapsto f(v) & t \ne 0 \\
(X_m,0)\longmapsto f(m)
\end{cases}
\]
is smooth on $\bigN _V M$,   

\item if $f$ is a smooth function  on $V$,  and if $f$ vanishes on $M$, then the function 
\[
\begin{cases} (v,t) \longmapsto t^{-1} f(v) & t \ne 0 \\
(X_m,0)\longmapsto X_m(f)
\end{cases}
\]
is smooth on $\bigN _V M$, and 

\item at every point of $\bigN _VM$, local coordinates can be selected from functions of the above types.

\end{enumerate} 

The deformation to the normal cone construction is a functor from closed embeddings  $M \longrightarrow V$ to smooth manifolds over $\R$, and it follows from functoriality that in the case of a closed embedding of Lie groups  $K{\to}G$,  the space 
\begin{equation}
    \label{eq-def-of-big-g}
\bigG = \bigN _GK = \{ G_t \} _{t \in \R}
\end{equation}
is a smooth family of Lie groups over $\R$. If we  trivialize the tangent bundles on $G$ and $K$ by left translations, then there is an induced trivialization
\begin{equation*}
N_G K \cong K \times \lie{g}/ \lie {k} ,
\end{equation*}
and the fibers of $\bigG $  are the groups
\begin{equation}
    \label{eq-fiber-of-the-smooth-family}
G_t = \begin{cases} 
G & t\ne 0 
\\
 K \ltimes \lie{g}/ \lie {k} & t =0.
 \end{cases}
\end{equation}

\subsection{The associated continuous field of C*-algebras}
\label{sec-continuous-field-of-group-c-star-algebras}
From now on  we shall assume that  $G$ is an almost-connected Lie group (that is, a Lie group with finitely many path components) and that $K$ is a maximal compact subgroup.  
Fix a left Haar measure $\mu$  for $G$,  determined by some nonzero element of the highest exterior power $\Lambda^{\mathrm{top}} \mathfrak{g}^*$ and a compatible orientation of $G$.
 Then for $t{\neq} 0$ equip the fiber groups    $G_t$ in \eqref{eq-fiber-of-the-smooth-family} 
with the left Haar measures   
\begin{equation}
\label{eq-def-of-d-and-haar-measures}
 dg_t = |t|^{-\dim (G/K)}dg .
\end{equation}
These  Haar measures vary smoothly with $t$, and they extend smoothly to $t{=}0$, in the sense that if $\xi$ is a smooth and compactly supported function on $\bigG$, and if $\xi_t$ denotes the restriction of $\xi$ to $G_t$, then  
\[
t \longmapsto \int_{G_t} \xi_t(g_t) \; dg_t  
\]
is a smooth function of $t{\in} \R$.  The measure on $G_0$ is determined  by  the natural identification $\Lambda ^{\mathrm{top}} ( \mathfrak{k}^* {\times} (\mathfrak{g}/\mathfrak{k})^*) \cong  \Lambda ^{\mathrm{top}} \mathfrak{g}^*$.

 If  $\xi_1$ and $\xi_2$ are two smooth and compactly supported functions on $\bigG$, then their convolution product, defined fiberwise by 
\[
(\xi_{1,t}*\xi_{2,t}) (g_t) =  \int _{G_t } \xi_{1,t} (h_t ) \xi_{2,t}(h_t ^{-1}g_t)\; d  h_t \qquad (g_t\in G_t),
\]
is a smooth and compactly supported function $\bigG$, too (here $\xi_{1,t}$ and $\xi_{2,t}$ denote the restrictions of $\xi_1$ and $\xi_2$ to $G_t$).

Of course, the same convolution formula, when applied to functions on $G_t$ alone, defines a product on $C_c^\infty(G_t)$, and a representation of $C_c^\infty (G_t)$ as bounded operators on $L^2 (G_t,dg_t)$ by left-convolution.  As usual, we shall denote by $C^*_r(G_t)$ the   completion of $C_c^\infty (G_t)$ in the operator norm on $L^2 (G_t,dg_t)$.    Compare  \cite[Sec.\;7.2]{Pedersen79}.

\begin{lemma}[See {\cite[Lemma 6.13]{Higson08}}] 
If $\xi\in C_c^\infty (\bigG )$,   and if $\xi_t$ denotes the restriction of $\xi$ to $G_t$, then the norm
 $\|\xi_t\|_{C^*_r (G_t)}$ 
is a continuous function of $t\in \R$.
\end{lemma}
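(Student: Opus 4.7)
The plan is to prove upper and lower semicontinuity of $f(t) = \|\xi_t\|_{C^*_r(G_t)}$ separately at each $t_0 \in \R$. Throughout, the smooth dependence of the Haar measures $dg_t$ on $t$, together with the fact that convolution products and powers of sections in $C_c^\infty(\bigG)$ remain in $C_c^\infty(\bigG)$, ensures that the fiberwise $L^1$- and $L^2$-norms of such sections are continuous functions of $t$.

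For lower semicontinuity at $t_0$, fix $\epsilon > 0$ and choose $\psi_0 \in C_c^\infty(G_{t_0})$ with $\|\psi_0\|_{L^2(G_{t_0})} = 1$ and $\|\xi_{t_0} * \psi_0\|_{L^2(G_{t_0})} > f(t_0) - \epsilon$. Using the smooth manifold structure of $\bigG$ from Section~\ref{sec-dnc}, extend $\psi_0$ to $\psi \in C_c^\infty(\bigG)$. Since $\xi * \psi \in C_c^\infty(\bigG)$, both $\|\psi_t\|_{L^2(G_t)}$ and $\|\xi_t * \psi_t\|_{L^2(G_t)}$ are continuous in $t$, so
\[
\liminf_{t \to t_0} f(t) \;\geq\; \liminf_{t \to t_0} \frac{\|\xi_t * \psi_t\|_{L^2(G_t)}}{\|\psi_t\|_{L^2(G_t)}} \;=\; \|\xi_{t_0} * \psi_0\|_{L^2(G_{t_0})} \;>\; f(t_0) - \epsilon,
\]
and letting $\epsilon \to 0$ yields $\liminf_{t\to t_0} f(t) \geq f(t_0)$.

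For upper semicontinuity, it is convenient to replace $\xi$ by the self-adjoint section $\eta = \xi * \xi^* \in C_c^\infty(\bigG)$, so that $f(t)^2 = \|\eta_t\|_{C^*_r(G_t)}$. When $t_0 \neq 0$ the argument is routine: all $G_t$ with $t \neq 0$ equal $G$ as groups, and a rescaling computation gives $\|\eta_t\|_{C^*_r(G_t)} = \||t|^{-d}\eta_t\|_{C^*_r(G)}$, continuous on $\R \setminus \{0\}$ since $|t|^{-d}\eta_t$ is continuous in $L^1(G)$ and the map $L^1 \to C^*_r$ is contractive. The substantive case is $t_0 = 0$, where $G_0 = K \ltimes \lie{g}/\lie{k}$ is a genuinely different group from $G$.

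The main obstacle is thus to establish $\limsup_{t \to 0} \|\eta_t\|_{C^*_r(G_t)} \leq \|\eta_0\|_{C^*_r(G_0)}$, which is the heart of \cite[Lemma~6.13]{Higson08}. The idea is a geometric contraction: as $t \to 0$, the support of $\eta_t$ viewed in $G$ collapses onto $K$ at rate $O(|t|)$, and in a shrinking neighborhood of $K$ the group law on $G$ agrees to leading order with the law on the motion group $G_0$ via the exponential map. Because $G_0$ is amenable --- being a semidirect product of a compact group with a vector group --- its reduced and full $C^*$-norms coincide, so $\|\eta_0\|_{C^*_r(G_0)}$ is accessible via $L^1$-spectral-radius estimates of the form $\|\eta_0\|_{C^*_r(G_0)} = \inf_n \|\eta_0^{*n}\|_{L^1(G_0)}^{1/n}$. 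One then uses the contraction to transfer this estimate uniformly to the nearby fibers $G_t$, circumventing the fact that the same $L^1$-spectral-radius identity fails on the non-amenable group $G$ itself. This reductive-group-specific contraction argument is the key technical input, and combining it with the lower semicontinuity established above yields the continuity claimed.
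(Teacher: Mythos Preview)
The paper does not prove this lemma; it simply cites \cite[Lemma~6.13]{Higson08}. Your outline is essentially the standard argument and matches what is done there: lower semicontinuity by testing against compactly supported vectors extended to sections of $\bigG$, and upper semicontinuity at $t_0=0$ via an $L^1$-spectral-radius description of the $C^*$-norm on the fiber $G_0$.

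There is, however, a genuine gap in your justification of the spectral-radius formula. Amenability of $G_0$ gives $C^*_r(G_0)=C^*(G_0)$, but it does \emph{not} by itself imply
\[
\|\eta_0\|_{C^*_r(G_0)}=\inf_n\|\eta_0^{*n}\|_{L^1(G_0)}^{1/n}
\]
for self-adjoint $\eta_0$; there exist amenable groups whose $L^1$-algebra is not symmetric, and for those the identity fails. What you actually need is that $L^1(G_0)$ is a \emph{symmetric} (Hermitian) Banach $*$-algebra, so that the $L^1$-spectral radius of a self-adjoint element equals its enveloping $C^*$-norm. For the motion group $G_0=K\ltimes\mathfrak{s}$ this does hold, because $G_0$ has polynomial growth (it is a compact extension of the abelian group $\mathfrak{s}$), and groups of polynomial growth have symmetric $L^1$-algebras. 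You should state and use this, not amenability alone.

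Once that is in place, your final ``contraction'' paragraph can be made precise much more simply than you suggest. For each fixed $n$ and every $t$ one has
\[
\|\eta_t\|_{C^*_r(G_t)}=\|\eta_t^{*n}\|_{C^*_r(G_t)}^{1/n}\le\|\eta_t^{*n}\|_{L^1(G_t)}^{1/n},
\]
and since $\eta^{*n}\in C_c^\infty(\bigG)$ the right-hand side is continuous in $t$. Hence $\limsup_{t\to 0}\|\eta_t\|_{C^*_r(G_t)}\le\|\eta_0^{*n}\|_{L^1(G_0)}^{1/n}$ for every $n$, and taking the infimum over $n$ yields the bound. No separate ``collapse of supports'' or comparison of group laws is needed beyond what is already encoded in the smooth structure of $\bigG$; the argument is not specific to reductive groups.
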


The smooth and compactly supported functions on $\bigG$ therefore generate the continuous sections of a \emph{continuous field of $C^*$-algebras} over $\R$,  in the sense of \cite[Prop.\;10.2.3]{Dixmier77}, with fibers $C^*_r (G_t)$, in which  a section $\{\xi'_t\}$  is deemed to be continuous if and only if for every $t_0\in \R$ and every $\varepsilon {>} 0$, there is a neighborhood $U$ of $t_0\in \R$ and a smooth, compactly supported   $\xi\colon \bigG\to\C$ such that $\| \xi_t - \xi'_t \|_{C^*_r(G_t)} <\varepsilon$, for all $t\in U$.

\subsection{Mapping cones and rescaling automorphisms}
One of the goals of this paper is to determine the structure of the continuous field $\{ C_r^*(G_t)\}$ in the case of a real reductive group in terms of the following construction:
 
 \begin{definition}
 \label{def-mapping-cone-field}
 Let $\alpha:A_0 \longrightarrow A$ be an embedding of  $C^*$-algebras. The \emph{mapping cone continuous field} of $C^*$-algebras over $\R$ associated to $\alpha$ has fibers 
\[
\operatorname{Cone}(\alpha)_{t}=\begin{cases}
A & t\neq 0\\
A_0 & t=0.
\end{cases}
\]
Its continuous sections are all those set-theoretic sections $\{ a_t\}$ for which the function 
$$
t\longmapsto  \begin{cases}
a_t & t\neq 0\\
\alpha (a_0) & t=0 
\end{cases}
$$
from $\R$ to $A$ is norm-continuous. 
\end{definition}

As advertised in the introduction, for  a real reductive  group $G$ and maximal compact subgroup $K$, we shall eventually construct an embedding of $C^*$-algebras 
\[
\alpha \colon C^*_r (G_0) \longrightarrow C^*_r (G),
\]
which we shall call the \emph{Mackey embedding}, for which the associated mapping cone continuous field is isomorphic to the continuous field $\{ C^*_r (G_t)\}$.  The isomorphism of continuous fields will be obtained using the following simple observation, which we shall state for continuous fields over the half-line $[0,\infty)$, rather than over $\R$, in order to simplify matters later on. 

\begin{lemma}
\label{lem-embedding-from-limit-formula-and-mapping-cone-field}
Let $\{ A_t\}$ be a continuous field of $C^*$-algebras over the parameter space $[0,\infty)$.  Suppose that there is a continuous family of $C^*$-algebra isomorphisms
\[
\alpha_t \colon A_t \longrightarrow A \qquad (t > 0)
\]
with the property that for every continuous section $\{ a_t\}$ of $\{ A_t\}$ over $[0,\infty)$, the limit 
\[
\lim_{t\to 0} \alpha_t (a_t)
\]
exists in $A$.  Then the formula 
\[
\alpha (a_0) = \lim_{t\to 0} \alpha_t (a_t) ,
\]
where $\{ a_t\}$ is any continuous section extending $a_0\in A_0$, determines an embedding of $C^*$-algebras
\[
\alpha\colon A_0 \longrightarrow A.
\]
Moreover  the isomorphisms $\alpha_t$, along with $\alpha_0{=}\mathrm{id}$, determine an isomorphism from the continuous field $\{ A_t\}$ to the mapping cone field for the morphism $\alpha \colon A_0 \to A$. \qed
\end{lemma}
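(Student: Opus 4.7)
The plan is to establish the lemma in two stages: first, that $\alpha$ is a well-defined $C^*$-algebra embedding, and second, that the fiberwise maps $\alpha_t$ (together with $\alpha_0 := \mathrm{id}_{A_0}$) yield an isomorphism of continuous fields from $\{A_t\}$ onto the mapping cone field of $\alpha$.

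For well-definedness, I would start from the observation that if $\{a_t\}$ is any continuous section of $\{A_t\}$ with $a_0 = 0$, then the norm function $t \mapsto \|a_t\|_{A_t}$ is continuous and vanishes at the origin, so by the isometry of each $*$-isomorphism $\alpha_t$ we get $\alpha_t(a_t) \to 0$. Applied to the difference of two continuous extensions of a given $a_0 \in A_0$, this shows that the value $\alpha(a_0)$ is independent of the choice of extension; the existence of such extensions is a standard fullness property of continuous fields of $C^*$-algebras. Since the continuous sections of $\{A_t\}$ form a $*$-algebra on which each $\alpha_t$ acts as a $*$-homomorphism, and since the $C^*$-algebra operations on $A$ are norm-continuous, $\alpha$ is a $*$-homomorphism. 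Finally, $\alpha$ is isometric because $\|\alpha(a_0)\|_A = \lim_{t\to 0}\|\alpha_t(a_t)\|_A = \lim_{t\to 0}\|a_t\|_{A_t} = \|a_0\|_{A_0}$, using continuity of the norm along a continuous section together with the isometry of each $\alpha_t$; hence $\alpha$ is an embedding.

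For the isomorphism of continuous fields, define $\Phi$ fiberwise by $\Phi_t := \alpha_t$ for $t>0$ and $\Phi_0 := \mathrm{id}$; each $\Phi_t$ is a $*$-isomorphism onto the corresponding cone fiber, so the task reduces to checking that $\Phi$ intertwines continuous sections. If $\{a_t\}$ is a continuous section of $\{A_t\}$, then the hypothesis on the family $\alpha_t$ gives norm-continuity of $t \mapsto \alpha_t(a_t)$ on $(0,\infty)$, while the very definition of $\alpha$ gives $\lim_{t\to 0}\alpha_t(a_t) = \alpha(a_0)$, so the condition of Definition~\ref{def-mapping-cone-field} is met. The main obstacle is the converse direction: given a continuous section $\{b_t\}$ of $\operatorname{Cone}(\alpha)$, one must show that $a_t := \alpha_t^{-1}(b_t)$ for $t > 0$, with $a_0 := b_0$, is a continuous section of $\{A_t\}$. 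For this, at any point $t_0 \in [0,\infty)$ I would choose (by fullness) a continuous section $\{c_t\}$ of $\{A_t\}$ with $c_{t_0} = a_{t_0}$, and observe that $\{\alpha_t(c_t)\}$ is a continuous section of $\operatorname{Cone}(\alpha)$ agreeing with $\{b_t\}$ at $t_0$, so that $\|\alpha_t(c_t) - b_t\|_A \to 0$ as $t \to t_0$; isometry of $\alpha_t$ then gives $\|c_t - a_t\|_{A_t} \to 0$. Thus $\{a_t\}$ is locally uniformly approximated near every $t_0$ by a continuous section of $\{A_t\}$, which by the standard criterion of \cite[Prop.\;10.2.3]{Dixmier77} forces $\{a_t\}$ itself to be continuous, completing the identification of the two continuous fields.
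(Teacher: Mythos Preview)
Your argument is correct and complete. The paper itself offers no proof of this lemma (it is marked with a bare \qed), though fragments of your reasoning reappear later in the paper's Remarks following Definition~\ref{def-mackey-embedding} (well-definedness and isometry of $\alpha$) and in the one-line proof of the theorem in Section~\ref{sec-mapping-cone} (the forward direction of the continuous-field isomorphism); your treatment of the converse direction via local approximation and \cite[Prop.\;10.2.3]{Dixmier77} supplies exactly the detail the paper leaves implicit.
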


In Section~\ref{sec-rescaling-automorphisms}    we shall construct isomorphisms $ \alpha_t\colon C^*_r(G_t)\to C^*_r(G)$ to which the lemma will  apply (although we shall use a slightly different notation there). The existence of the limits  in the lemma will be proved in Section \ref{sec-limit-formula}. 

\section{Parabolic induction and intertwining operators}

In this section we shall recall some facts about parabolically induced representations and intertwining operators between them. In addition to fixing terminology and notation, we shall also prove one technical theorem about families of intertwining operators that is a consequence of the principle of induction in stages. 

\subsection{Real reductive groups and representations}

We shall be working throughout the paper  with  {real reductive groups} in the sense of David Vogan's definitions in \cite[Sec.\,0.1]{VoganGreenBook}. In particular our real reductive groups will be linear. Fraktur letters without subscript will denote the \emph{real} Lie algebras of the associated Lie groups.  

Let $G$ be a real reductive group. As part of the definition, $G$ is equipped with  a Cartan involution $\theta$, and we shall denote by $K$  the maximal compact subgroup of $G$ that is fixed by $\theta$. The corresponding Cartan decomposition of the Lie algebra $\mathfrak{g}$ will be written as \begin{equation}
\label{eq-Cartan-decomp}
\mathfrak{g}=\mathfrak{k}\oplus\mathfrak{s}.
\end{equation}  
By definition, the Lie algebra $\mathfrak{g}$ is also equipped  with a non-degen\-erate, $G$-invariant, symmetric bilinear form 
\begin{equation}\label{eq-Cartan-form}
\left\langle\:,\:\right\rangle:\mathfrak{g}\times\mathfrak{g}\longrightarrow \R
\end{equation} 
that is compatible with the Cartan decomposition \eqref{eq-Cartan-decomp} in the sense that it is positive-definite on $\mathfrak{s}$ and negative-definite on $\mathfrak{k}$. We shall also fix throughout the paper a maximal abelian subspace $\mathfrak{a}\subseteq\mathfrak{s}$ and a compatible Iwasawa decomposition 
\begin{equation}\label{eq-fixed-iwasawa-decomp}
G=KAN.  
\end{equation}
The group $N$ is a closed connected subgroup of $G$, and its Lie algebra has the form 
\begin{equation}
    \label{eq-lie-algebra-of-n}
\mathfrak{n} = \bigoplus _{\alpha \in \Delta^+(\mathfrak{g},\mathfrak{a})} \mathfrak{g}_\alpha,
\end{equation}
where the direct sum is over a system of positive restricted roots. See \cite[Sec. VI.4]{Knapp02}.

We shall work throughout with strongly continuous and unitary representations on Hilbert spaces. The notation $H_\pi$ will refer to the  carrying Hilbert space of the representation $\pi$.   With the obvious exception of the regular representation on $L^2(G)$, we shall also assume that our representations are \emph{admissible}, which in the context of unitary representations means that they are finite direct sums of irreducible unitary representations.

\subsection{Parabolic subgroups}
Given $G$ and the attendant choices made in the previous section, form the \emph{minimal parabolic subgroup} $P_{\min}=MAN$, where $M$ is the centralizer of $\mathfrak{a}$ in $K$. The \emph{standard parabolic subgroups} of $G$ are the closed subgroups $P\subseteq G$ that contain $P_{\min}$. There are finitely many of these, each of which may be written as
\[
P = L_P\cdot N_P=M_P\cdot A_P\cdot N_P,
\]
(these are direct product decompositions as manifolds) 
where: 
\begin{enumerate}[\rm (i)]

\item $L_P$ is the  \emph{Levi factor} of $P$, which is a real reductive group in its own right, and $N_P$ is the \emph{unipotent radical} of $P$ and a closed, connected subgroup of $N$. 

\item  $M_P$ is the smallest closed subgroup of $L_P$ that includes all the compact subgroups of $L_P$ (equivalently, $M_P$ is  the mutual kernel of all continuous morphisms from $L_P$ to $\R^\times _{+}$) and $A_P$ is the largest subgroup of  $A$ that centralizes $L_P$; one has  $L_P = M_P\cdot A_P$.

\end{enumerate}
Using the notation of \eqref{eq-lie-algebra-of-n}, the Lie algebra of $N_P$ is 
\begin{equation}
    \label{eq-lie-algebra-of-n-p}
\mathfrak{n}_P = \bigoplus_{\substack{\alpha \in \Delta^+(\mathfrak{g},\mathfrak{a}) \\ \alpha\vert_{\mathfrak{a}_P \ne 0}}} \mathfrak{g}_\alpha. 
\end{equation}
See \cite[Sec.~V.5]{KnappRepTheorySemisimpleGroups} or \cite[Sec.~VII.7]{Knapp02}. To be consistent throughout the paper we shall always work with standard parabolic subgroups (the other parabolic subgroups of $G$ are  conjugates of the standard parabolic subgroups).  
Note that the group $G$ itself is a standard parabolic subgroup, with $G= M_G A_G$ (the group $N_G$ is trivial). This is the \emph{split decomposition} of $G$ \cite[Prop.\,7.27]{Knapp02}.

\subsection{Parabolic induction}
\label{sec-parabolic-induction}
Let $P=L_PN_P$ be a standard parabolic subgroup of $G$, as above. Denote by $\delta_P$ the modular function of $P$, which is defined
by the formula
\begin{equation}\label{eq-def-of-delta-p}
\int_P \xi(pp') \, dp = \delta_P(p') \int_P \xi(p)\, dp,
\end{equation} 
where $dp$ denotes a left Haar measure on $P$, and let  $\pi$ be an  
admissible  unitary representation of $L_P$. Denote by  $H_\pi^\infty$  the space of smooth vectors for $\pi$, and then form the Fr\'echet space 

\begin{equation}
    \label{eq-smooth-parabolic-representation-space}
\Ind_P^{G,\infty} H^\infty_\pi
=
\Bigl \{ \, 
\varphi: G \stackrel{C^\infty}\to H^\infty _\pi \Big \vert  
\parbox{2.1 in}{\begin{center}$\varphi(g\ell n ) = \pi(\ell)^{-1}\delta_P(\ell)^{-\frac 1 2} \varphi(g)$
\\
$\forall g\in G, \,\, \forall \ell\in L_P,\,\,  \forall n\in N_P$\end{center} }
\,\Bigr\} .
\end{equation}
Define an inner product on this space by 
\begin{equation}
    \label{eq-unitary-inner-product}
\langle \varphi_1,\varphi_2 \rangle_{\Ind_P^G H_\pi} = \frac{1}{\operatorname{vol} (K)} \int _K \langle \varphi_1(k), \varphi_2(k)\rangle_{H_\pi}\, dk ,
\end{equation}
and write 
\begin{equation}
    \label{eq-parabolic-representation-hilbert-space}
 \Ind_P^G H_\pi = \parbox{2.8 in}{completion of  $\Ind_P^{G,\infty} H^\infty_\pi$ in the norm associated to the inner product \eqref{eq-unitary-inner-product}.}
\end{equation}
The group $G$ acts by left translation on $\Ind_P^{G,\infty} H^\infty_\pi$, and the action preserves the inner product \eqref{eq-unitary-inner-product}, thanks to the inclusion of the $\delta_P$-term in \eqref{eq-smooth-parabolic-representation-space}. So the action extends to a unitary action on $\Ind_P^G H_\pi$, which becomes a unitary representation of $G$, denoted by $\Ind_P^G\pi$. This is  the representation \emph{parabolically and unitarily induced from the representation $\pi$ of $L_P$ along $P$}, and the space $\Ind_P^{G,\infty} H^\infty_\pi$ is the space of smooth vectors in $\Ind_P^G H_\pi$. More details can be found for instance in \cite[\S VII.1]{KnappRepTheorySemisimpleGroups}.

The parabolic induction construction \begin{equation}\label{eq-parabolic-induction}
H_\pi \longmapsto \Ind_P^G H_\pi
\end{equation}  is a functor from admissible unitary representations of $L$ to admissible unitary representations of $G$.

\subsection{Compact model}\label{sec-compact-model}
The Fr\'echet space $\Ind_P^{G,\infty} H^\infty_\pi$ may be identified with the space of smooth functions 
\[
C^\infty (K, H_\pi^\infty )^{K \cap L_P} =  \Bigl\{\varphi: K \overset{C^\infty}{\longrightarrow} H^\infty _\pi\Big \vert \parbox{1.6in}{
\begin{center}
$\varphi(k\ell) = \pi(\ell)^{-1}\varphi(k)$
\\
$\forall k\in K\:,\:\forall \ell\in K\cap L_P$ 
\end{center}
}
\Bigr\} 
\]
\textit{via} restriction  of functions from $G$ to $K$. The restriction map is  an isomorphism of Fr\'echet spaces, and it
 extends to a unitary isomorphism of Hilbert spaces 
\begin{equation}
    \label{eq-compact-model-Hilbert-space-isomorphism}
\Ind_P^G H_\pi \stackrel \cong \longrightarrow  L^2 (K, H_\pi)^{K \cap L_P}
\end{equation}
after one completes on both sides with respect to the inner product in \eqref{eq-unitary-inner-product}.  

\begin{definition} 
The \emph{compact model} of the parabolically induced representation $\Ind_P^G\pi$ defined in \eqref{eq-parabolic-induction} is the unitary representation of $G$ on $L^2 (K, H_\pi)^{K \cap L_P}$ that is obtained
using the  isomorphism \eqref{eq-compact-model-Hilbert-space-isomorphism}.
\end{definition} 

The action of $G$ on the compact model Hilbert space is a bit complicated to describe, but its restriction to $K$ is simply left-translation. 

We shall use the following simple facts about the compact model and the split decomposition of $G$, which follow directly from the definitions: 

\begin{lemma} 
\label{lem-parabolically-induced-reps-on-the-split-component}
Let $G$ be a real reductive group, let $P=L_PN_P$ be a standard parabolic subgroup, and let $\pi$ be an 
admissible unitary representation of $L_P$ on a Hilbert space $H_\pi$.
Let  $G= M_GA_G$ be the split decomposition of $G$ .
\begin{enumerate}[\rm (i)]

\item  The compact model Hilbert space $L^2(K,H_\pi)^{K\cap L_P}$ depends only on the restriction of $\pi$ to $M_G\cap L_P$.
\item The restriction to $M_G$ of the representation $\Ind_P^G \pi$ on the compact model Hilbert space depends only on the restriction of $\pi$ to $M_G\cap L_P$. That is, if $\pi_1,\pi_2:L_P\longrightarrow U(H_\pi)$ have a common restriction to $M_G\cap L_P$, then the restriction to $M_G$ of the associated parabolically induced representations on the compact model Hilbert spaces are \emph{identical}.
\item The restriction to $A_G$ of the representation $\Ind_P^G \pi$ on the compact model Hilbert space is given by the formula
\[
\pushQED{\qed}
(a\cdot \varphi)(k) = \pi(a)\cdot \varphi(k)\qquad \forall a\in A_G,\,\, \forall k \in K .
\qedhere
\popQED
\]

\end{enumerate}
\end{lemma}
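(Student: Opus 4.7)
The plan is to write down an explicit formula for the $G$-action in the compact model and to reduce all three statements to two elementary containments: $K\subseteq M_G$ and $N_P\subseteq M_G$.

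Starting point. For $g\in G$, $k\in K$, and $\varphi$ in the compact model, extend $\varphi$ to $G$ via the equivariance relation in \eqref{eq-smooth-parabolic-representation-space}, translate by $g$, and restrict to $K$. Using the factorization $G = KL_PN_P$, one obtains
\[
(g\cdot \varphi)(k) \;=\; \pi(\ell)^{-1}\delta_P(\ell)^{-1/2}\varphi(k'),
\]
where $g^{-1}k = k'\ell n'$ is any decomposition with $k'\in K$, $\ell\in L_P$, $n'\in N_P$. The ambiguity in this decomposition lies in $K\cap L_P$, and a short check shows the right-hand side is independent of the choice.

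The two containments are the engine of the argument. The first, $K\subseteq M_G$, is immediate from the characterization of $M_G$ as the smallest closed subgroup of $G$ containing every compact subgroup. The second, $N_P\subseteq M_G$, needs a brief structural argument: the split decomposition $\lie{g} = \lie{m}_G\oplus \lie{a}_G$ with $\lie{a}_G$ central furnishes a Lie algebra map $\lie{g}\twoheadrightarrow \lie{a}_G$ onto an abelian algebra, so $[\lie{g},\lie{g}]\subseteq \lie{m}_G$; but every restricted root space in $\lie{n}_P$ lies in $[\lie{g},\lie{g}]$, and $N_P$ is connected. I see this as the only step in the plan that is not purely formal.

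Granted these, the three parts fall out. For (i), the condition defining $L^2(K, H_\pi)^{K\cap L_P}$ only uses $\pi|_{K\cap L_P}$, and $K\cap L_P\subseteq M_G\cap L_P$. For (ii), take $m\in M_G$ and $k\in K\subseteq M_G$; then $m^{-1}k\in M_G$, and in any decomposition $m^{-1}k = k'\ell n'$ both $k'\in K$ and $n'\in N_P$ sit in $M_G$, forcing $\ell\in L_P\cap M_G = M_G\cap L_P$, so the formula above depends only on $\pi|_{M_G\cap L_P}$. For (iii), take $a\in A_G$; centrality of $A_G$ gives $a^{-1}k = ka^{-1}$, so $k'=k$, $\ell=a^{-1}\in A_G\subseteq A_P\subseteq L_P$, $n'=e$; since $\Ad(a)$ acts trivially on all of $\lie{g}$, $\delta_P(a)=1$, and the formula collapses to $(a\cdot\varphi)(k) = \pi(a)\varphi(k)$.
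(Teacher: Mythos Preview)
Your proof is correct. The paper itself does not supply a proof: it simply asserts that these are ``simple facts about the compact model and the split decomposition of $G$, which follow directly from the definitions,'' and closes the statement with a \qed. Your argument is precisely the direct verification that the paper omits, and the only non-tautological ingredient you isolate---the containment $N_P\subseteq M_G$ via $\mathfrak{n}_P\subseteq[\mathfrak{g},\mathfrak{g}]\subseteq\mathfrak{m}_G$---is exactly what is needed.
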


\subsection{Induction in stages}\label{sec-induction-in-stages}

Suppose  that $P$ and $Q$ are standard  parabolic subgroups of $G$ with  
$P\subseteq Q \subseteq G $. 
There are  inclusions 
\[
L_P\subseteq L_Q\subseteq G,
\quad A_Q\subseteq A_P 
\quad\text{and} \quad 
M_P\subseteq M_Q,
\] 
and moreover  the subgroup $P{\cap}L_Q\subseteq L_Q$ is a parabolic subgroup of $L_Q$, with 
\[
P{\cap}L_Q = L_P\cdot (N_P{\cap} L_Q)= M_P\cdot A_P \cdot (N_P{\cap} L_Q).
\]
See for instance \cite[Lemma 4.1.17]{VoganGreenBook}. 
In addition,  $Q=L_QP$. Finally,  $N_P$ admits the semidirect product decomposition 
\[
N_P = (L_Q{\cap} N_P) \ltimes N_Q.
\]

Given an  
admissible unitary representation $\pi$ of $L_P$, we may form the Hilbert space $\Ind_{P{\cap}L_Q}^{L_Q,\infty} H^\infty_\pi$, which carries an  
admissible   representation  of $L_Q$, and then the space 
\[ 
\Ind_{Q}^{G,\infty} \Ind_{P{\cap}L_Q}^{L_Q,\infty} H^\infty_\pi ,
\]
which carries an  
admissible representation of $G$.
The  elements of the latter are functions $F$ on $G$ with values in the space $\Ind_{P{\cap}L_Q}^{L_Q,\infty} H^\infty_\pi$ of $H^\infty_\pi$-valued functions on $L_Q$.  Given such a function $F$, and given $g\in G$, we may evaluate $F(g)$ at $e\in L_Q$ to obtain an element 
$F(g)(e) \in H_\pi ^\infty$. 
This process creates from $F$ a smooth function on $G$ with values in $H^\infty _\pi$: 
\begin{equation}\label{eq-induction-in-stages-formula}
\Ind_{Q}^{G,\infty} \Ind_{P{\cap}L_Q}^{L_Q,\infty} H^\infty_\pi \ni F \longmapsto  \bigl [ g\mapsto F(g)(e)\bigr ] \in  \Ind_P^{G,\infty} H^\infty _\pi 
\end{equation}

\begin{lemma}[Induction in stages; see e.g.\ {\cite[Prop.\,4.1.18]{VoganGreenBook}}]
\label{lem-induction-in-stages}
The formula \eqref{eq-induction-in-stages-formula} defines a $G$-equivariant and isometric linear isomorphism that extends to a unitary equivalence of representations 
\[
\Ind_{Q}^G \Ind_{P{\cap}L_Q}^{L_Q} H_\pi \stackrel \cong\longrightarrow \Ind_P^G H_\pi
\]
and an equivalence of functors from admissible unitary representations of $L_P$ to  
admissible unitary representations of $G$.
\end{lemma}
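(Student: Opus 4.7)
The plan is to verify directly that the smooth map in \eqref{eq-induction-in-stages-formula} is a $G$-equivariant isometric linear isomorphism on the subspaces of smooth vectors, and then to extend it uniquely to a unitary equivalence of Hilbert space completions. Naturality in $\pi$ will be automatic because the construction is defined by pointwise evaluation of values of $F$.

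Well-definedness is the first step: given $F\in\Ind_{Q}^{G,\infty}\Ind_{P\cap L_Q}^{L_Q,\infty}H_\pi^\infty$, we must check that $\varphi(g):=F(g)(e)$ satisfies $\varphi(g\ell n)=\pi(\ell)^{-1}\delta_P(\ell)^{-1/2}\varphi(g)$ for $\ell\in L_P$ and $n\in N_P$. Using the decomposition $N_P=(L_Q\cap N_P)\ltimes N_Q$, write $n=n_1n_2$ with $n_1\in L_Q\cap N_P$ and $n_2\in N_Q$, and regroup as $\ell n=(\ell n_1)n_2$ with $\ell n_1\in P\cap L_Q\subseteq L_Q$ and $n_2\in N_Q$. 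The $Q$-equivariance of $F$ contributes a factor $\delta_Q(\ell n_1)^{-1/2}=\delta_Q(\ell)^{-1/2}$, since unipotent elements of $L_Q$ lie in the kernel of $\delta_Q$, and the $(P\cap L_Q)$-equivariance of $F(g)$ evaluated at $\ell n_1$ contributes $\pi(\ell)^{-1}\delta_{P\cap L_Q}(\ell)^{-1/2}$. The product recovers $\pi(\ell)^{-1}\delta_P(\ell)^{-1/2}$ precisely when one has the modular identity
\[
\delta_P(\ell)=\delta_Q(\ell)\,\delta_{P\cap L_Q}(\ell)\qquad (\ell\in L_P),
\]
which follows from the Lie algebra decomposition $\mathfrak{n}_P=(\mathfrak{n}_P\cap\mathfrak{l}_Q)\oplus\mathfrak{n}_Q$ together with $\delta_P(\ell)=|\det(\Ad(\ell)|_{\mathfrak{n}_P})|$. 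The $G$-equivariance and the smoothness of the image are immediate.

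For the isometry, expand the iterated inner product into a double integral over $K$ and $K\cap L_Q$ of $\langle F_1(k)(k'),F_2(k)(k')\rangle_{H_\pi}$. Since $\delta_Q$ is trivial on the compact group $K\cap L_Q$, the $Q$-equivariance of each $F_i$ simplifies to $F_i(kk')=\tau(k')^{-1}F_i(k)$, where $\tau=\Ind_{P\cap L_Q}^{L_Q}\pi$; evaluation at $e\in L_Q$ gives $F_i(kk')(e)=F_i(k)(k')$. A change of variables $k\mapsto k(k')^{-1}$ in the outer integral, using bi-invariance of Haar measure on $K$, collapses the double integral to the single integral defining the inner product on $\Ind_P^G H_\pi$, with the volume normalizations matching. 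To exhibit an inverse, set
\[
F(g)(x)=\delta_Q(x)^{1/2}\varphi(gx)\qquad (g\in G,\; x\in L_Q).
\]
Checking that $F(g)$ satisfies $(P\cap L_Q)$-equivariance, and that $F$ satisfies $Q$-equivariance, invokes the same modular identity together with the right $N_P$-invariance of $\varphi$, which allows $\varphi(gn_Qx)=\varphi(gx\cdot x^{-1}n_Qx)=\varphi(gx)$ because $x^{-1}n_Qx\in N_Q\subseteq N_P$ by normality of $N_Q$ in $Q$. A direct substitution confirms that the two constructions are mutual inverses on smooth vectors.

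The main obstacle is the careful bookkeeping of modular characters: inserting $\delta_Q^{1/2}$ rather than $\delta_P^{1/2}$ or $\delta_{P\cap L_Q}^{1/2}$ at precisely the right points, and exploiting the cancellation from $\delta_P=\delta_Q\cdot\delta_{P\cap L_Q}$ on $L_P$, is the delicate point; everything else reduces to direct manipulation of the definitions. Once the map is a $G$-equivariant isometric linear bijection on dense subspaces of smooth vectors, it extends uniquely to a unitary equivalence of Hilbert space completions, and naturality with respect to intertwiners of $L_P$-representations is visible from the pointwise formula, giving the claimed equivalence of functors.
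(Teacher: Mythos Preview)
Your argument is correct. The paper itself does not supply a proof of this lemma; it is stated with a reference to Vogan's book and then used as input. Your direct verification---checking the equivariance of $g\mapsto F(g)(e)$ via the modular identity $\delta_P=\delta_Q\cdot\delta_{P\cap L_Q}$ on $L_P$, establishing isometry through the double $K$-integral and the change of variables $k\mapsto k(k')^{-1}$, and exhibiting the explicit inverse $F(g)(x)=\delta_Q(x)^{1/2}\varphi(gx)$---is the standard proof and all steps check out. The only point one might expand slightly is the justification that $\Ad_\ell$ preserves both summands of $\mathfrak{n}_P=(\mathfrak{n}_P\cap\mathfrak{l}_Q)\oplus\mathfrak{n}_Q$ for $\ell\in L_P$, but this is immediate from $L_P\subseteq L_Q$ normalizing $\mathfrak{n}_Q$ and $L_P$ normalizing $\mathfrak{n}_P$.
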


The following lemma examines induction in stages from the point of view of the compact model. 

\begin{lemma}
\label{lem-induction-in-stages-in-compact-model}
    The  $G$-equivariant and isometric linear map \eqref{eq-induction-in-stages-formula} fits into a functorial \textup{(}in $\pi$\textup{)} commuting diagram 
\begin{equation*}
\xymatrix{
    \Ind_{Q}^{G,\infty} \Ind_{P{\cap}L_Q}^{L_Q,\infty} H^\infty_\pi\ar[d]_{\cong} \ar[r]^-{\eqref{eq-induction-in-stages-formula}} & \Ind_P^{G,\infty} H^\infty_\pi \ar[d]^{\cong}
    \\
    C^\infty\bigl (K, C^\infty(K{\cap}L_Q, H^\infty_\pi )^{K \cap L_P}\bigr)^{K\cap L_Q} \ar[r] & C^\infty \bigl (K, H^\infty _\pi\bigr )^{K\cap L_P}
}
\end{equation*}
in which the vertical maps are the restriction isomorphisms as in \eqref{eq-compact-model-Hilbert-space-isomorphism}, and the bottom morphism is defined using the  formula 
\begin{multline*} 
C^\infty\bigl (K, C^\infty(K{\cap}L_Q, H^\infty_\pi )^{K \cap L_P}\bigr)^{K\cap L_Q} \ni F 
\\
\longmapsto  \bigl [ k\mapsto F(k)(e)\bigr ] \in C^\infty \bigl (K, H^\infty _\pi\bigr )^{K\cap L_P} ,
\end{multline*}
as in  \eqref{eq-induction-in-stages-formula}.  The bottom morphism depends only on the restriction of $\pi$ to $K\cap L_P$, and, like the top morphism, it extends to an isometric isomorphism between Hilbert space completions. \qed
\end{lemma}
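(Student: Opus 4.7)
The plan is to obtain everything from the already-established Lemma~\ref{lem-induction-in-stages} by reducing all three assertions to a single diagram chase. Essentially the only real content is the commutativity of the square, which will fall out of unwinding what ``restriction to $K$'' means on both layers of the induced picture; once commutativity is in hand, the functoriality, the dependence on $\pi|_{K\cap L_P}$, and the isometric extension will all be formal.

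First I would verify commutativity on the dense subspace of smooth vectors by direct computation. Given $F\in \Ind_{Q}^{G,\infty} \Ind_{P\cap L_Q}^{L_Q,\infty} H^\infty_\pi$, going right and then down sends $F$ first to the function $g\mapsto F(g)(e)$ on $G$ and then restricts it to $K$, yielding $k\mapsto F(k)(e)$. Going down and then right first restricts $F$ to $K$ (producing a $K\cap L_Q$-equivariant function on $K$ whose values are themselves $K\cap L_P$-equivariant functions on $K\cap L_Q$) and then evaluates each such inner function at $e\in K\cap L_Q$. Because $e\in K\cap L_Q\subseteq L_Q$, evaluating the inner function at $e$ gives $F(k)(e)$ either before or after restricting the inner variable from $L_Q$ to $K\cap L_Q$, so the two paths literally coincide.

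With commutativity established, the remaining claims are immediate. The statement that the bottom map depends only on $\pi|_{K\cap L_P}$ is visible from its very formula: evaluation at $e$ does not invoke $\pi$ at all, and both source and target Fr\'echet spaces are built only from the equivariance under $K\cap L_P$ (inner) and $K\cap L_Q$ (outer), which makes sense whenever $\pi|_{K\cap L_P}$ is specified. Functoriality in $\pi$ is automatic for the same reason. For the isometric extension, observe that the two vertical restriction maps extend to unitary isomorphisms by the compact model identification \eqref{eq-compact-model-Hilbert-space-isomorphism} (applied once to $G\supseteq P$ and once to $L_Q\supseteq P\cap L_Q$, followed by the obvious identification of $L^2$-sections), and the top horizontal map is isometric by Lemma~\ref{lem-induction-in-stages}. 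Commutativity of the diagram on the smooth dense subspaces therefore forces the bottom map to be isometric on its (dense) smooth domain, and hence to extend uniquely to an isometric isomorphism between Hilbert space completions.

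The only point that requires any care is bookkeeping between the two Haar measures in play: the inner product on the compact model at the $G$-level uses $\operatorname{vol}(K)^{-1}\int_K$, while at the $L_Q$-level it uses $\operatorname{vol}(K\cap L_Q)^{-1}\int_{K\cap L_Q}$. One then has to check that Fubini against the normalized measure on $K/(K\cap L_Q)\times (K\cap L_Q)$ reproduces the normalized measure on $K$, which is the normalization already built into \eqref{eq-unitary-inner-product} and is the only nontrivial verification needed for the isometry. I anticipate this measure-theoretic normalization to be the one potentially fiddly step; everything else is formal consequence of the diagram chase above.
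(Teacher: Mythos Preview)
Your argument is correct; the paper itself provides no proof at all (the lemma is stated with a terminal \qed), treating the result as an immediate consequence of the definitions and of Lemma~\ref{lem-induction-in-stages}. Your diagram chase and the three-sides-unitary-implies-fourth-side-unitary argument are exactly the routine verification the paper leaves to the reader.
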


\subsection{Intertwining operators in the compact model}

We shall require   some parts  of the theory of intertwining operators between parabolically induced representations, as developed by Knapp and Stein \cite{KnappSteinI,KnappSteinII}. But we shall also need a more specialized theorem about intertwining operators that is  difficult to locate in the literature in the precise form that we require.  The purpose of this section is to formulate and prove that result.

Let $P=L_PN_P$ be a standard parabolic subgroup of $G$, and let $\pi$ be an 
admissible unitary representation of $L_P=M_PA_P$ on a Hilbert space $H_\pi$.  For $\nu\in \mathfrak{a}^*_P$,  
denote by 
\begin{equation}
\label{eq-notation-e-to-the-i-nu-times-tau-1}
e^{i\nu}\cdot \pi  \colon L_P \to U (H_\pi)
\end{equation}
the unitary representation that is defined by the formula 
\begin{equation*}
(e^{i\nu}\cdot \pi)(\ell) = e^{i\nu}(a)\cdot  \pi(\ell)\qquad \forall \ell=ma \in L_P
\end{equation*}
(this is not  standard notation, but it is convenient for our  purposes).

Note that  the carrying Hilbert space of the representation $e^{i\nu}\cdot \pi$ is $H_\pi$, and indeed the underlying $(K{\cap}L_P)$-representation is $H_\pi$, too. 
It follows that under the restriction isomorphism 
\[
\xymatrix{
\Ind_P^G  H_{e^{i\nu}\cdot\pi}\ar[r]^-{\eqref{eq-compact-model-Hilbert-space-isomorphism}}_-\cong &  L^2 (K, H_{e^{i\nu} \cdot\pi})^{K\cap L_P} ,
}
\]
the Hilbert space on the right-hand side does not depend on $\nu$ in any way.  To emphasize  this, we shall write 
\[
\xymatrix{
\Ind_P^G  H_{e^{i\nu}\cdot \pi}\ar[r]^-{\eqref{eq-compact-model-Hilbert-space-isomorphism}}_-\cong &  L^2 (K, H_{\pi})^{K\cap L_P} 
}
\]
from now on.

We shall construct  intertwining operators between parabolically induced representations using copies of  $\mathrm{SL}(2,\R)$ or $\mathrm{PSL}(2,\R)$ in  $G$. Let  $P$ be a standard parabolic subgroup,  and let $\varphi:\mathfrak{sl}(2,\R)\to \mathfrak{g}$ be a Lie algebra homomorphism such that 
\[
\varphi\left(\left[\begin{smallmatrix}
0&1\\-1&0\end{smallmatrix}\right]\right)\in\mathfrak{k}\quad,\quad \varphi\left(\left[\begin{smallmatrix}
1&0\\0&-1\end{smallmatrix}\right]\right)\in\mathfrak{a}_P\quad\text{and}\quad\varphi\left(\left[\begin{smallmatrix}
0&1\\0&0\end{smallmatrix}\right]\right)\in\mathfrak{n}_P
\]
(compare \cite[4.3.6]{VoganGreenBook}). The map $\varphi$ exponentiates to a Lie group morphism $\Phi\colon \mathrm{SL}(2,\R)\to G$, and we shall write 
\begin{equation}
\label{eq-definition-of-x-phi-and-w}
X_\varphi = \varphi \Bigl (
\left [
\begin{smallmatrix}
    1 & 0 \\ 0 & -1
\end{smallmatrix}
\right ]
\Bigr )
\quad \text{and}\quad 
w = \Phi\Bigl (
\left [
\begin{smallmatrix}
    0 & -1 \\ 1 & 0
\end{smallmatrix}
\right ]
\Bigr ).
\end{equation}
Assume the following:  
\begin{equation}
    \label{eq-conditions-on-x-phi}
X \in \mathfrak{a}_P \quad \text{and} \quad X \perp X_\varphi
\quad \Rightarrow \quad 
\text{$\Phi[\mathrm{SL}(2,\R)]$ centralizes $X$}.
\end{equation}
This assumption  will make the intertwining operators that we are about to construct as simple as possible, in the sense that their dependence  on the parameter $\nu\in \mathfrak{a}^*_P$ will be as simple as possible.

The adjoint action of $w$ gives linear automorphisms 
\[
\Ad_{w}\colon \mathfrak{a}_P \longrightarrow \mathfrak{a}_P\quad \text{and} \quad   \Ad^*_{w}\colon \mathfrak{a}_P^* \longrightarrow \mathfrak{a}_P^*, 
\]
the latter being defined by $\Ad^*_{w}(\nu)  = \nu \circ \Ad_{w^{-1}}$. For brevity we shall  write these  as 
\begin{equation}\label{eq-simple-reflection-from-sl2}
w\colon \mathfrak{a}_P \longrightarrow \mathfrak{a}_P\quad \text{and} \quad   w\colon \mathfrak{a}_P^* \longrightarrow \mathfrak{a}_P^* 
\end{equation}
from now on.  The former maps $X_\varphi$ to its negative and is the identity on the orthogonal complement of $X_\varphi$; that is, it is a \emph{simple reflection}.   

Since $w$ normalizes $\mathfrak{a}_P$, it also normalizes the Levi subgroup $L_P$. 
Notice  that if $\pi$ is a unitary representation of $L_P$ on a Hilbert space $H_\pi$, then the Hilbert space of the representaion $w(\pi){=}\Ad_w^*(\pi){=}\pi\circ \Ad_{w^{-1}}$ is also $H_\pi$.

\begin{theorem} 
\label{thm-intertwiners-constant-in-some-directions}
Let $\pi$ be an irreducible unitary representation of $L_P$ with real infinitesimal character \textup{(}see the remark below\textup{)}. With the notation and assumptions given above, there is a strongly continuous family of unitary operators 
\[
\mathscr{A}_\nu\colon  L^2 (K,H_{\pi})^{K\cap L_P}  \longrightarrow L^2 (K,H_{\pi})^{K\cap L_P}
\qquad (\nu \in \mathfrak{a}_P^*),
\]
such that: 
\begin{enumerate}[\rm (i)]
\item For every $\nu\in \mathfrak{a}_P^*$, the operator $\mathscr{A}_\nu$ intertwines the compact models of the representations $\Ind_P^G e^{i\nu}\cdot\pi$ and  $\Ind_P^G w(e^{i\nu}\cdot\pi)$:
\begin{equation*}
\mathscr{A}_\nu \cdot  \bigl (\Ind_P^G e^{i\nu}\cdot\pi\bigr )(g)= \bigl (\Ind_P^G w(e^{i\nu}\cdot \pi)\bigr )(g) \cdot \mathscr{A}_\nu
\qquad \forall g\in G.
\end{equation*}

\item If $\mu,\nu \in   \mathfrak{a}_P^*$, and if $w(\mu) = \mu$, then $\mathscr{A}_\nu = \mathscr{A}_{\nu+ \mu}$.  Thus the family $\{\, \mathscr{A}_\nu:\nu\in \mathfrak{a}_P^*\,\}$ is constant in all but one direction within $\mathfrak{a}_P^*$.

\end{enumerate}
\end{theorem}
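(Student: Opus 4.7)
The approach is to reduce the statement to a rank-one intertwining problem inside a larger Levi subgroup, and then lift the result back to $G$ via induction in stages. The centralizer condition \eqref{eq-conditions-on-x-phi} is precisely what makes such a reduction possible.

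First, locate a standard parabolic subgroup $Q$ with $P\subseteq Q\subseteq G$ whose split component satisfies $\mathfrak{a}_Q = X_\varphi^\perp \subseteq \mathfrak{a}_P$ (equivalently, $\mathfrak{a}_Q$ is the fixed subspace of $w$ in $\mathfrak{a}_P$). By \eqref{eq-conditions-on-x-phi}, the image $\Phi[\mathrm{SL}(2,\R)]$ centralizes every element of $\mathfrak{a}_Q$ and therefore lies in the centralizer of $A_Q$, which is $L_Q$; in particular the element $w$ from \eqref{eq-definition-of-x-phi-and-w} belongs to $L_Q$. Within $L_Q$, the parabolic subgroup $P\cap L_Q$ has split component of rank one relative to $A_Q$, spanned by $X_\varphi$, and $w$ represents the nontrivial element of the corresponding rank-one Weyl group.

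Next, decompose $\nu = \nu_Q + \nu_\varphi$ with $\nu_Q\in \mathfrak{a}_Q^*$ and $\nu_\varphi\in \R X_\varphi^*$. Because $A_Q$ is central in $L_Q$, the unitary character $e^{i\nu_Q}$ of $A_Q$ extends to a unitary character $\chi_{\nu_Q}$ of $L_Q$. A direct computation with \eqref{eq-smooth-parabolic-representation-space} yields the factorization
\[
\Ind_{P\cap L_Q}^{L_Q}(e^{i\nu}\cdot \pi)\;\cong\; \chi_{\nu_Q}\otimes \Ind_{P\cap L_Q}^{L_Q}(e^{i\nu_\varphi}\cdot \pi)
\]
of unitary representations of $L_Q$, and since $w\in L_Q$ centralizes $A_Q$ we have $w(\chi_{\nu_Q}) = \chi_{\nu_Q}$, so the same factorization holds with $w(e^{i\nu}\cdot \pi)$ on the left and $w(e^{i\nu_\varphi}\cdot \pi)$ on the right. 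By induction in stages (Lemmas~\ref{lem-induction-in-stages} and \ref{lem-induction-in-stages-in-compact-model}), the problem reduces to constructing a strongly continuous family of unitary intertwining operators
\[
\mathscr{A}'_{\nu_\varphi}\colon \Ind_{P\cap L_Q}^{L_Q}(e^{i\nu_\varphi}\cdot \pi)\longrightarrow \Ind_{P\cap L_Q}^{L_Q}(w(e^{i\nu_\varphi}\cdot \pi))
\]
depending only on $\nu_\varphi$; one then tensors with the identity on $\chi_{\nu_Q}$ and applies $\Ind_Q^G$ to obtain the desired $\mathscr{A}_\nu$. Condition (i) is automatic from this construction, and condition (ii) holds because $\mathscr{A}'_{\nu_\varphi}$ does not involve $\nu_Q$.

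The rank-one family $\mathscr{A}'_{\nu_\varphi}$ is produced by the classical Knapp--Stein theory \cite{KnappSteinI,KnappSteinII}, normalized by the Harish-Chandra $c$-function so as to be unitary for unitary parameters. The main obstacle is verifying that this normalized family is defined, unitary, and strongly continuous for \emph{all} $\nu_\varphi\in \R$: the defining integral diverges on the unitary axis and must be handled by analytic continuation, while one must also ensure that the normalizing factor has neither a pole nor a zero along $\R X_\varphi^*$. It is precisely here that the hypothesis that $\pi$ has real infinitesimal character is essential, as it rules out the reducibility points along the unitary axis that would otherwise obstruct unitarity of the normalized operator.
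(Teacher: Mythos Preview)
Your proposal is correct and follows essentially the same route as the paper's proof: both construct the intermediate parabolic $Q$ with $\mathfrak{a}_Q = X_\varphi^\perp$, invoke Knapp--Stein intertwiners for $L_Q$ (citing \cite[Prop.~8.6]{KnappSteinII}), and lift to $G$ via induction in stages in the compact picture. The only cosmetic difference is that where you factor $\Ind_{P\cap L_Q}^{L_Q}(e^{i\nu}\cdot\pi)$ as a tensor product with the central character $\chi_{\nu_Q}$, the paper instead appeals to Lemma~\ref{lem-parabolically-induced-reps-on-the-split-component} to see directly that the $L_Q$-intertwiner can be adjusted to depend only on $\nu(X_\varphi)$; these are equivalent observations.
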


\begin{remark} 
See Definition~\ref{def-real-infinitesimal-character} for a rapid review of the concept of real infinitesimal character.  Knapp and Stein developed their theory intertwining operators, which we use in the proof, under the assumption that the inducing representation $\pi$ has real infinitesimal character,\footnote{See for instance \cite[Lemma 5.5]{KnappSteinII}, which makes it clear that the \emph{Basic Assumption} of Knapp and Stein is the assumption of real infinitesimal character.} 
which is the reason we assume it in the statement of the theorem.
\end{remark}

\begin{proof}[Proof of Theorem~\ref{thm-intertwiners-constant-in-some-directions}]

To begin, we shall construct an intermediate standard parabolic subgroup $Q$:
\[
P\subseteq Q\subseteq G.
\]
Let $\mathfrak{a}_Q\subseteq \mathfrak{a}_P$ be the orthogonal complement in $\mathfrak{a}_P$ of the element $X_\varphi$ in \eqref{eq-definition-of-x-phi-and-w}, and let $L_Q$ be the centralizer of $\mathfrak{a}_Q$ in $G$.  Observe that $L_P\subseteq L_Q$.  Define $N_Q$ to be the closed, connected subgroup of  $N_P$ with Lie algebra

\begin{equation}
    \label{eq-lie-algebra-of-n-q}
\mathfrak{n}_Q = \bigoplus_{\substack{\alpha \in \Delta^+(\mathfrak{g},\mathfrak{a}) \\ \alpha\vert_{\mathfrak{a}_Q \ne 0}}} \mathfrak{g}_\alpha, 
\end{equation}
and set $Q = L_Q N_Q=M_QA_QN_Q$.  This is the intermediate standard parabolic subgroup that we require.

The group 
\[
P{\cap} L_Q = L_P \cdot  (N_P{\cap} L_Q)
\]
is a parabolic subgroup of $L_Q$ with Levi factor $L_P$, and we can therefore form the parabolically induced representations 
\[
\Ind_{P{\cap}L_Q}^{L_Q} H_{e^{i\nu}\cdot \pi}\quad \text{and} \quad\Ind_{P{\cap}L_Q}^{L_Q} H_{w(e^{i\nu}\cdot \pi)}
\qquad (\nu \in \mathfrak{a}_P^*) ,
\]
which are unitary representations of $L_Q$.

The subgroup $\Phi[\mathrm{SL}(2,\R)]$  of $G$ is included in $L_Q$, thanks to the assumption \eqref{eq-conditions-on-x-phi}, and therefore  $w\in K\cap L_Q$.  Since $w$ normalizes $\mathfrak{a}_P$, it follows from the work of Knapp and Stein  \cite[Prop.\,8.6]{KnappSteinII} that there exists a strongly continuous family of unitary intertwining operators of $L_Q$-representations
\begin{equation}\label{eq-knapp-stein-intertwiners-for-l-prime-reps}
\mathscr{U}'_\nu\colon \Ind_{P{\cap}L_Q}^{L_Q}H_{e^{i\nu}\cdot \pi} \longrightarrow \Ind_{P{\cap}L_Q}^{L_Q}H_{w(e^{i\nu}\cdot \pi)},\end{equation} where $\nu \in \mathfrak{a}_P^*$.

Now by Lemma~\ref{lem-parabolically-induced-reps-on-the-split-component} the two  unitary representations of $L_Q$ that appear in \eqref{eq-knapp-stein-intertwiners-for-l-prime-reps}, when restricted to $M_Q$, depend only on $\pi$ and the value of the linear functional $\nu$ on $X_\varphi$, and when restricted to the subgroup $A_Q\subseteq L_Q$ these representations take the forms
\[
a \cdot f = e^{i\nu (a)}\cdot f \qquad \forall a\in A_Q,\,\,\forall f \in \Ind_{P{\cap}L_Q}^{L_Q}H_{e^{i\nu}\cdot \pi} 
\]
and 
\[
a \cdot f = e^{iw(\nu) (a)}\cdot f \qquad \forall a\in A_Q,\,\,\forall f \in \Ind_{P{\cap}L_Q}^{L_Q}H_{w(e^{i\nu}\cdot \pi)}.
\]
So the group $A_Q$ acts on both representation spaces through scalar multiplication. And in fact since $w(\nu)$ and $\nu$ are equal on $\mathfrak{a}_Q\subseteq \mathfrak{a}_P$, the action of $A_Q$ occurs through scalar multiplication by the \emph{same} character on the two representations.  It follows that for $\mu\in \mathfrak{a}_P^*$,  if $\mu(X_\varphi)=0$, or equivalently if $w(\mu) = \mu$, then the intertwining operator $\mathscr{U}'_\nu$ in \eqref{eq-knapp-stein-intertwiners-for-l-prime-reps} is also an intertwining operator 
\[
\mathscr{U}'_\nu\colon \Ind_{P{\cap}L_Q}^{L_Q}H_{e^{i(\nu+\mu)}\cdot \pi} \longrightarrow \Ind_{P{\cap}L_Q}^{L_Q}H_{w(e^{i(\nu+\mu)}\cdot \pi)}.
\]
 So we may now adjust the definition of the intertwiners $\mathscr{U}'_\nu$, if necessary, to ensure that 
 \begin{equation}
     \label{eq-constancy-condition-for-a-prime-intertwiners}
\mu\in  \mathfrak{a}_P \,\,\,\text{and}\,\,\,  \mu(X_\varphi) = 0 
\\
\quad \Rightarrow \quad \mathscr{U}'_\nu = \mathscr{U}'_{\nu+\mu}\quad \forall \nu\in  \mathfrak{a}_P^*.
 \end{equation}
If $L_Q$ was equal to $G$, then at this point the proof of the theorem would be complete. In any case, we shall assume \eqref{eq-constancy-condition-for-a-prime-intertwiners} from now on, and proceed to the general case.

We may use the functoriality of parabolic induction to construct from the $\mathscr{U}'_\nu$ a family of intertwining operators
\[
\mathscr{U}''_\nu \colon \Ind_{Q}^G\Ind_{P{\cap}L_Q}^{L_Q}H_{e^{i\nu}\cdot \pi}
\longrightarrow  \Ind_{Q}^G\Ind_{P{\cap}L_Q}^{L_Q}H_{w(e^{i\nu}\cdot \pi)}
\]
depending on $\nu \in \mathfrak{a}_P^*$ (thus $\mathscr{U}''_\nu$ is by definition obtained by applying the parabolic induction functor   to $\mathscr{U}'_\nu$). In the diagram 
\begin{equation}
    \label{eq-commuting-diagram-of-intertwiners}
{\footnotesize
\xymatrix@C=10pt{
\Ind_{Q}^G\Ind_{P{\cap}L_Q}^{L_Q}H_{e^{i\nu}\cdot \pi}\ar[d]_{\cong}\ar[r]^-{\mathscr{U}''_\nu}  & \Ind_{Q}^G\Ind_{P{\cap}L_Q}^{L_Q}H_{w(e^{i\nu}\cdot \pi)}\ar[d]^{\cong}
 \\
  L^2(  K,L^2(K {\cap} L_Q, H_\pi)^{K {\cap} L_P})^{K\cap L_Q} \ar[r]_-{\mathscr{A}''_\nu}& 
    L^2(  K,L^2(K {\cap} L_Q, H_{w(\pi)})^{K {\cap} L_P})^{K{\cap} L_Q}
}}
\end{equation}
that arises from functoriality of the induction in stages isomorphism in  Lemma~\ref{lem-induction-in-stages-in-compact-model}, the unitary isomorphism $\mathscr{A}''_\nu$ satisfies $\mathscr{A}_{\nu}'' = \mathscr{A} ''_{\nu+\mu}$ if $\mu(X_\varphi)=0$, because the operators $\mathscr{U}'_\nu$, and hence the operators $\mathscr{U}''_\nu$, have this property.
Consider now the diagram of unitary isomorphisms
\[
{\footnotesize
\xymatrix@C=-15pt@R=20pt{
& \Ind_{Q}^G\Ind_{P{\cap}L_Q}^{L_Q}H_{w(e^{i\nu}\cdot\pi)} \ar[rr]^{\cong} \ar'[d][dd]
& &  \Ind_P^G H_{w(e^{i\nu}\cdot \pi)}  \ar[dd]
\\ \Ind_{Q}^G\Ind_{P{\cap}L_Q}^{L_Q}H_{e^{i\nu}\cdot \pi} \ar[ur]^-{\mathscr{U}''_\nu}\ar[rr]^{\cong}\ar[dd]
& & \Ind_P^G H_{e^{i\nu}\cdot \pi} \ar[ur]^-{\mathscr{U}_\nu}\ar[dd]
\\
&L^2 (K,L^2(K \cap L_Q, H_\pi)^{K \cap L_P})^{K\cap L_Q}  \ar'[r]_-{\cong}[rr]
& &  L^2 (K,  H_\pi)^{K \cap L_P}
\\
L^2 (K,L^2(K \cap L_Q, H_\pi)^{K \cap L_P})^{K\cap L_Q} \ar[rr]_\cong  \ar[ur] _-{\mathscr{A}''_\nu}
& & L^2 (K,  H_\pi)^{K \cap L_P} \ar[ur]_{\mathscr{A}_\nu}
}
}
\]
 in which 
\begin{enumerate}[\rm (i)]
    \item the horizontal morphisms are the induction in stages isomorphisms from Lemmas~\ref{lem-induction-in-stages} and \ref{lem-induction-in-stages-in-compact-model}, and the front and rear faces are as described in Lemma~\ref{lem-induction-in-stages-in-compact-model},
    \item the downward maps are restriction isomorphisms  in \eqref{eq-compact-model-Hilbert-space-isomorphism}, 
    \item the left face  is \eqref{eq-commuting-diagram-of-intertwiners}, 
    \item $\mathscr{U}_\nu$ is chosen to make to top face commute, and $\mathscr{A}_\nu$ is chosen to make the bottom face commute.
\end{enumerate}
All faces other than the right-hand face commute  by construction, so the right-hand face commutes, too. Moreover, if $s(\mu)=\mu$, then $\mathscr{A}_{\nu} = \mathscr{A}_{\nu+\mu}$ by construction of $\mathscr{A}_\nu$.
\end{proof}

\subsection{\texorpdfstring{Cuspidal principal series and intertwining groups}{Cuspidal principal series and intertwining groups}}
\label{sec-intertwining-groups}

Let $P$ be a \emph{cuspidal} standard parabolic subgroup, that is,  assume that the  group $M_P$ in the  decomposition $P=M_P\cdot A_P\cdot N_P$ admits irreducible square-integrable   representations. Let $\sigma$ be one of these \emph{discrete series} representations and let $\nu\in\mathfrak{a}^*_P$. The pair $(\sigma,\nu)$ determines  a representation of $L_P=M_PA_P$: 
\[
\begin{array}{rccl}
\sigma\otimes e^{i\nu}:&L_P&\longrightarrow&U(H_\sigma)
\\
&ma&\longmapsto&e^{i\nu}(a)\sigma(m) .
\end{array}
\] 

\begin{remark} 
Earlier, in \eqref{eq-notation-e-to-the-i-nu-times-tau-1}, we used the notation $e^{i\nu} \cdot \pi$ in a similar context.  But there $\pi$ was a representation of $L_P$, while here $\sigma$ is a representation of $M_P$, and indeed a square-integrable representation of $M_P$.  Throughout the paper we shall reserve the notation $\sigma\otimes e^{i \nu}$ for this particular case.
\end{remark}

\begin{definition}
\label{def-cuspidal_PS}
The \emph{cuspidal principal series representation} with parameters $(\sigma,\nu)$ is the parabolically induced representation 
\[
\pi_{P,\sigma,\nu}:=\Ind_P^G \sigma\otimes e^{i\nu}.
\]
\end{definition}

From now on we shall work with the compact models of the representations $\pi_{P,\sigma,\nu}$, and in an effort to streamline our notation, we shall write 
\begin{equation}
\label{eq-def-Ind-H_sigma-compact}
\Ind H_\sigma = L^2 (K, H_\sigma)^{K \cap L_P}
\end{equation} 
from here onwards. The Hilbert space $\Ind H_\sigma$ depends only on the restriction of the representation $\sigma$ to the compact group $K\cap L_P$, and not on $P$, although the representation $\pi_{P,\sigma,\nu}$ on it \emph{does} depend on
$P$, as well as  on $\sigma$ and $\nu$, of course.

The Knapp-Stein theory of intertwining operators \cite{KnappSteinI,KnappSteinII} associates to each $w\in N_K(L_P)$, each discrete series representation $\sigma$, and each $\nu \in \mathfrak{a}_P^*$, a unitary operator
\begin{equation}
\label{def-A_w_sigma_nu}\mathscr{A}_{w,\sigma,\nu}: \Ind  H_{\sigma }\longrightarrow\Ind H_{w(\sigma) }
\end{equation}
that is an equivalence between the principal series representations $\pi_{P,\sigma,\nu}$ and $\pi_{P,w(\sigma), w(\nu)}$, in that  
\begin{equation*}
\mathscr{A}_{w,\sigma,\nu}\cdot\pi_{P,\sigma,\nu}(g)  = \pi_{P,w(\sigma), w(\nu)}(g)\cdot \mathscr{A}_{w,\sigma,\nu}\qquad \forall g \in G.
\end{equation*}
The operator $\mathscr{A}_{w,\sigma,\nu}$ varies strongly-continuously with $\nu\in \mathfrak{a}^*$. Moreover if $\sigma$ is any discrete series representation, then 
\begin{multline}
\label{eq-cocycle-relation}
\mathscr{A}_{w_1,w_2(\sigma),w_2(\nu)}\mathscr{A}_{w_2,\sigma,\nu} =  \mathscr{A}_{w_1w_2,\sigma, \nu}
\\
\forall \nu \in \mathfrak{a}_P^*,\,\, \forall w_1,w_2\in N_K(L_P).
\end{multline}

Now if $w\in N_K(L_P)$ not only normalizes $L_P$, but actually centralizes $L_P$, then $\mathscr{A}_{w,\sigma,\nu} = \mathrm{id}$.  
This prompts the following definition.

\begin{definition}\label{def-int-group_W_sigma} 
The \emph{intertwining  group} associated with the pair $(P,\sigma)$ is the finite group 
\[
\begin{aligned}
W_\sigma
     & = \bigl \{w \in N_K(L_P) : \operatorname{Ad}_w ^* \sigma \simeq \sigma\, \bigr \} \big /  Z_K(L_P)
     \\
     & = \bigl \{w \in N_K(\mathfrak{a}_P) : \operatorname{Ad}_w ^* \sigma \simeq \sigma\, \bigr \} \big /  Z_K(\mathfrak{a}_P)
\end{aligned} 
\]
\end{definition}

The intertwining group acts faithfully as orthogonal transformations of $\mathfrak{a}_P$, and we shall typically view it as a group of automorphisms of $\mathfrak{a}_P$.  But since the Knapp-Stein intertwiners associated to elements of  $Z_K(L_P)$ are trivial, it follows from 
\eqref{eq-cocycle-relation} that we may associate to any $w\in W_\sigma$ a well-defined intertwining operator 
\[
\mathscr{A}_{w,\sigma,\nu} \colon \Ind H_\sigma \longrightarrow \Ind H_{w(\sigma)}
\]
by choosing any representative of $w$ in $N_K(L)$.

\subsection{R-groups}
\label{sec-r-groups}

For  $w\in W_\sigma$, choose a unitary equivalence 
\begin{equation*}
H_{w(\sigma)} \stackrel \simeq \longrightarrow H_{ \sigma}
\end{equation*}
of representations of $M_P$, and note that the same operator is also a unitary equivalence
\[
  H_{\sigma} \stackrel \simeq \longrightarrow H_{w^{-1}( \sigma)}.
\]
We shall denote  the induced unitary equivalences by 
\begin{equation}
    \label{eq-equivalence-of-reps-e}
E \colon \Ind H_{w(\sigma)} \stackrel \simeq \longrightarrow \Ind H_{\sigma}
\quad \text{and} \quad 
E \colon \Ind H_{\sigma} \stackrel \simeq \longrightarrow \Ind H_{w^{-1}(\sigma)} 
\end{equation}
(they are one and the same linear map). Note that by Schur's lemma, these operators are unique up to multiplication by a complex scalar of modulus $1$. 

It is proved in \cite[Prop.\,8.6(ii)]{KnappSteinII}  that 
\begin{equation}
    \label{eq-knapp-stein-a-and-e-relation}
 E   \circ\mathscr{A}_{w,\sigma,\nu}  = \mathscr{A}_{w,w^{-1}(\sigma),\nu}\circ  E 
\colon \Ind H_\sigma \longrightarrow \Ind H_\sigma ,
\end{equation}
and it follows from this that the adjusted Knapp-Stein operators

\begin{equation}
    \label{eq-adjusted-knapp-stein-ops}
 \mathscr{A}'_{w,\sigma,\nu} = E   \circ\mathscr{A}_{w,\sigma,\nu}\colon 
  \Ind H_{\sigma }\longrightarrow\Ind   H_{\sigma},
\end{equation}
which are well-defined up to multiplication by a complex scalar of modulus $1$, satisfy the projective cocycle relation 
\begin{multline}
\label{eq-porjective-cocycle-relation}
\mathscr{A}'_{w_1,w_2(\sigma),w_2(\nu)}\mathscr{A}'_{w_2,\sigma,\nu} =  \text{scalar}\cdot \mathscr{A}'_{w_1w_2,\sigma, \nu}
\\
\forall \nu \in \mathfrak{a}_P^*,\,\, \forall w_1,w_2\in W_\sigma,
\end{multline} 
for a scalar of modulus $1$.

\begin{definition}\label{def-W_0sigma}
We shall denote by  $W_{0,\sigma} \subseteq W_\sigma$ 
the normal subgroup of $W_\sigma$ consisting of those elements $w$ for which  the adjusted Knapp-Stein operator 
\[
\mathscr{A}'_{\sigma, w,0} \colon \Ind H_{\sigma }\longrightarrow\Ind   H_{\sigma}
\]
(that is, the adjusted Knapp-Stein operator for $\nu {=} 0$) is a scalar multiple of the identity operator. 
\end{definition}

Knapp and Stein gave  a complete description of the intertwining group $W_\sigma$ in terms  $W_{0,\sigma}$   and their so-called $R$-group  \cite{KnappSteinI,KnappSteinII}.
But rather than present the Knapp-Stein $R$-group, we shall follow a hybrid approach   that combines the Knapp-Stein theory with Vogan's theory of the $R$-group  \cite[Ch.\,4]{VoganGreenBook}.  The following theorem summarizes what we shall need for this paper.

\begin{theorem} 
\label{thm-Rgroup-sl2s}
Let $P=M_PA_PN_P$ be a standard parabolic subgroup of $G$ and let $\sigma$ be a discrete series representation of $M_P$.
There exists a reduced root system $\overline{\Delta}_0$ on  $\mathfrak{a}^*_P$ with Weyl group $W(\overline{\Delta}_0)$ such that 
\begin{enumerate}[\rm (i)]
\item 
$W_{0,\sigma}=W(\overline{\Delta}_0)$ 
as groups of automorphisms of $\mathfrak{a}^*_P$. 
\end{enumerate}
Moreover there is  a system of positive roots $\overline \Delta_0^+\subseteq  \overline\Delta_0$ and 
 a morphism of Lie algebras
\[
\varphi \colon \mathfrak{sl}(2,\R)\times\cdots\times\mathfrak{sl}(2,\R)\longrightarrow\mathfrak{g}
\] and 
such that, if $\varphi_j$ is the restriction of $\varphi$ to the $j$\textsuperscript{th} factor, and if 
\[
X_j = \varphi_j \left ( \left [
\begin{smallmatrix}
    1 & 0 \\ 0 & -1
\end{smallmatrix}
\right ]\right ) 
\quad \text{and} \quad
s_j = \Phi_j \left ( \left [
\begin{smallmatrix}
    0 & -1 \\ 1 & 0
\end{smallmatrix}
\right ]\right )
 ,
\]
\textup{(}where $\Phi_j$ is the Lie group morphism induced from $\varphi_j$\textup{)}, then
\begin{enumerate}[\rm (i)]
\setcounter{enumi}{1}
    \item  $X_j$ is an element of $\mathfrak{a}_P$, for every $j$, and the image $\varphi_j [\mathfrak{sl}(2,\R)]\subseteq \mathfrak{g}$ commutes with the orthogonal complement of $X_j$ in $\mathfrak{a}_P$,  

    \item $s_j\in N_K(\mathfrak{a}_P)$ for all $j$, 
    
    \item if $S_\sigma$ is the  abelian group of   orthogonal transformations of $\mathfrak{a}^*_P$ generated by   the $\Ad^*_{s_j} $, then   the group
   \[
   R_\sigma=\bigl \{\, w\in W_\sigma \,\big \vert \,  w(\overline{\Delta}_0^+)\subseteq \overline{\Delta}_0^+\bigr \} 
   \]
   is a subgroup of  $S_\sigma$, and 

   \item the intertwining group $W_\sigma $ decomposes as a semidirect product 
    \[
    W_\sigma = W_{0,\sigma}\rtimes R_\sigma.
    \]
\end{enumerate}
\end{theorem}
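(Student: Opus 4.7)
The plan is to synthesize the classical Knapp–Stein $R$-group theorem with Vogan's $\mathfrak{sl}(2)$-refinement from \cite[Ch.\,4]{VoganGreenBook}. For (i), I would define $\overline{\Delta}_0$ to be the set of reduced restricted roots $\alpha \in \Delta(\mathfrak{g}, \mathfrak{a}_P)$ with $s_\alpha \in W_{0,\sigma}$. Knapp–Stein \cite{KnappSteinI,KnappSteinII} show that $\overline{\Delta}_0$ is a reduced root system and that $W_{0,\sigma}$ is generated by the reflections in its elements; combined with the inclusion $W(\overline{\Delta}_0) \subseteq W_{0,\sigma}$ that is immediate from the definition, this gives $W_{0,\sigma} = W(\overline{\Delta}_0)$ as subgroups of the orthogonal group of $\mathfrak{a}_P$.

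Next, I would fix a positive system $\overline{\Delta}_0^+$ and let $\alpha_1, \ldots, \alpha_n$ denote the ``bad'' reduced roots, namely those $\alpha$ with $s_\alpha \in W_\sigma \setminus W_{0,\sigma}$ that generate the $R$-group in Knapp–Stein's sense. The crucial structural input is that the $\alpha_j$'s are mutually \emph{strongly orthogonal} (so that $\alpha_j \pm \alpha_k$ is never a root for $j \neq k$). For each $\alpha_j$, standard real Lie theory produces a Cartan-involution-compatible $\mathfrak{sl}(2,\mathbb{R})$-triple $\{X_{\alpha_j}, H_{\alpha_j}, Y_{\alpha_j}\}$ in $\mathfrak{g}$ with $X_{\alpha_j} \in \mathfrak{g}_{\alpha_j} \cap \mathfrak{n}_P$, $Y_{\alpha_j} = -\theta X_{\alpha_j}$, and $H_{\alpha_j} = [X_{\alpha_j}, Y_{\alpha_j}] \in \mathfrak{a}_P$; this furnishes $\varphi_j : \mathfrak{sl}(2,\mathbb{R}) \to \mathfrak{g}$ with $X_j = \varphi_j(\mathrm{diag}(1,-1)) = H_{\alpha_j}$. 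Since commutation of $\mathfrak{sl}(2)$-triples attached to roots is equivalent to strong orthogonality of the underlying roots, the $\varphi_j$'s assemble into a single $\varphi : \mathfrak{sl}(2,\mathbb{R})^n \to \mathfrak{g}$. The image $\varphi_j[\mathfrak{sl}(2,\mathbb{R})] \subseteq \mathfrak{g}_{\alpha_j} \oplus \mathbb{R} H_{\alpha_j} \oplus \mathfrak{g}_{-\alpha_j}$ commutes with $X_j^\perp \subseteq \mathfrak{a}_P$, because $\alpha_j$ vanishes there; this is (ii). The element $s_j = \Phi_j\bigl(\left[\begin{smallmatrix}0&-1\\1&0\end{smallmatrix}\right]\bigr)$ lies in $K$ (since the rotation lies in $\mathrm{SO}(2)$ and $\varphi_j$ is compatible with the Cartan involution) and acts on $\mathfrak{a}_P$ by $-1$ on $X_j$ and $+1$ on $X_j^\perp$, so $s_j \in N_K(\mathfrak{a}_P)$ and realizes the reflection $s_{\alpha_j}$; this is (iii). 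Strong orthogonality also ensures the $s_j$'s commute, so $S_\sigma$ is abelian.

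For (v), the set $\overline{\Delta}_0$ is $W_\sigma$-stable since $W_{0,\sigma}$ is normal in $W_\sigma$ and the defining condition $s_\alpha \in W_{0,\sigma}$ transforms covariantly under conjugation. Because $W_{0,\sigma} = W(\overline{\Delta}_0)$ acts simply transitively on the positive systems in $\overline{\Delta}_0$, every $w \in W_\sigma$ factors uniquely as $w = w_0 \cdot r$ with $w_0 \in W_{0,\sigma}$ and $r$ stabilizing $\overline{\Delta}_0^+$, yielding $W_\sigma = W_{0,\sigma} \rtimes R_\sigma$. For (iv), Knapp–Stein show that $R_\sigma$ is generated by the reflections $s_{\alpha_j}$ in the bad roots, and each such reflection is represented by $s_j \in S_\sigma$, so $R_\sigma \subseteq S_\sigma$. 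The main obstacle is the strong orthogonality of the bad roots: this is a nontrivial structural theorem in real reductive Lie theory that I would import from \cite[Ch.\,4]{VoganGreenBook} rather than reprove. It is precisely this phenomenon that distinguishes the real case from the complex case treated in \cite{HigsonRoman20}, where $R_\sigma$ is trivial and the entire decomposition collapses to $W_\sigma = W_{0,\sigma}$.
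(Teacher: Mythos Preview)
Your outline is sound and arrives at the same conclusions, but the route differs from the paper's. The paper does \emph{not} work directly with the Knapp--Stein analytic objects. Instead it passes to Vogan's cohomological-induction framework: one associates to $(P,\sigma)$ a set of discrete $\theta$-stable data $(\mathfrak{q},H,\delta)$, takes $\overline{\Delta}_0$ to be Vogan's root system $\overline{\Delta}_\delta$ and $S_\sigma$ to be Vogan's $W(\overline{\Delta}_S)$, and then reads off (i)--(v) from specific results in \cite[Ch.\,4]{VoganGreenBook}. The link back to the paper's analytically defined $W_\sigma$ and $W_{0,\sigma}$ is supplied by two external bridge theorems: $W_\delta=W_\sigma$ from \cite[Lemma\,5.2]{CHS}, and $W_\delta^0=W_{0,\sigma}$ from \cite[Thm.\,1]{Delorme84}. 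Your approach stays in the Knapp--Stein picture throughout, defining $\overline{\Delta}_0$ directly via the condition $s_\alpha\in W_{0,\sigma}$ and appealing to \cite{KnappSteinI,KnappSteinII} for (i) and (v); this is more economical in that it avoids the cohomological-induction detour.

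There is one point you should tighten. You import the strong orthogonality of the ``bad'' roots from \cite[Ch.\,4]{VoganGreenBook}, but Vogan proves that statement for \emph{his} set $\overline{\Delta}_S$, which is defined through the $\theta$-stable data $(\mathfrak{q},H,\delta)$, not through the Knapp--Stein condition $s_\alpha\in W_\sigma\setminus W_{0,\sigma}$. To transport his conclusion to your roots you need to know the two descriptions coincide---and that identification is exactly what the CHS/Delorme bridge provides. So either cite those bridge results (at which point your argument and the paper's converge), or replace the Vogan citation by a direct strong-orthogonality argument in the Knapp--Stein setting (such an argument exists, essentially in \cite{KnappSteinII} or in Knapp's book, but you should locate and cite it precisely rather than leave it implicit). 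Without one of these two fixes, the appeal to Vogan for strong orthogonality is not yet justified on your chosen route.
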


\begin{proof}
Vogan constructs the representations $\pi_{P,\sigma, \nu}$ not by parabolic induction, but by cohomological induction, starting from what he terms a set of \emph{discrete $\theta$-stable data} $(\mathfrak{q},H,\delta)$; see \cite[Defn.\,6.5.1]{VoganGreenBook}, as well as \cite[Sec.\,3]{CHS} for a survey that is suited to the perspective of this paper.  He defines an intertwining group $W_\delta$ \cite[Thm\,4.4.8]{VoganGreenBook} that turns out to be equal to the Knapp-Stein group $W_\sigma$ as a group of automorphisms of $\mathfrak{a}_P$ \cite[Lemma\,5.2]{CHS}, and he defines a subgroup $W_\delta^0\subseteq W_\delta$ in \cite[Defn\,4.3.13]{VoganGreenBook}. Delorme proves that $W_\delta^0$ is equal to the Knapp-Stein group that we are denoting $W_{0,\sigma}$ \cite[Thm\,1]{Delorme84}.

By definition, Vogan's $W^0_\delta$ is the Weyl group of a root system $\overline\Delta_\delta$, described in \cite[Defn\,4.3.11]{VoganGreenBook}, and we take this to be our $\overline \Delta_0$. This settles item (i) in the statement of the theorem.

Vogan also defines a set $\overline{\Delta_S}\subseteq  \mathfrak{a}_P^*$ in \cite[(4.3.26)]{VoganGreenBook}. He proves in \cite[Lemma\,4.3.27]{VoganGreenBook} that the simple reflections in these elements  pairwise commute. We take these simple reflections to be our $s_j$.  Vogan calls the group that they generate  $W(\overline{\Delta}_S)$; this is our  group  $S_\sigma$.

The fact that the reflections $s_j$ arise from a Lie algebra morphism $\varphi$, as in items (ii) and (iii) in the statement of the theorem, is a consequence of the strong orthogonality property of $\overline \Delta_S$ proved in \cite[Lemma\,4.3.7]{VoganGreenBook} and the construction in \cite[4.3.6]{VoganGreenBook}.

Finally, we choose any system of positive roots in $\overline \Delta_0$ to be our $\overline \Delta_0^+$.    Items (iv) and (v) in the statement of the theorem are proved   \cite[Lemma\,4.3.29]{VoganGreenBook}.
\end{proof}

\begin{remark}
It follows from (ii) that the Lie group morphisms $\Phi_j$  that are integrated from the Lie algebra morphisms $\varphi_j$ satisfy Condition \eqref{eq-conditions-on-x-phi}. This will allow us to apply Theorem \ref{thm-intertwiners-constant-in-some-directions} at a  critical point in Section~\ref{sec-rescaling-automorphisms}  (see Lemma \ref{lem-u-mu-constant-in-some-directions}).
\end{remark}

\section{The Mackey bijection}
\label{sec-mackey-bijection}

Afgoustidis \cite{AfgoustidisMackeyBijection} used Vogan's theory of minimal $K$-types, as presented in   \cite{VoganGreenBook}, to 
construct a 
\emph{Mackey bijection} between the tempered dual of a real reductive group and the unitary dual of the associated Cartan motion group.  We shall review Afgoustidis's construction in this section.

\subsection{Imaginary part of the infinitesimal character}
Let $G$ be a real reductive group, equipped with a  Cartan decomposition $  \mathfrak{g}  =  \mathfrak{k}\oplus \mathfrak{s}$.
If $\mathfrak{h}_{\C}$ is any Cartan subalgebra of the complexification of the Lie algebra $G$, then it is well known that every irreducible unitary representation $\pi$ of $G$ has an \emph{infinitesimal character} 
\[
\InfCh (\pi) \in \mathfrak{h}^* / W(\mathfrak{g}_{\C},\mathfrak{h}_{\C})
\]
Vogan explains in \cite{Vogan00} how to define a continuous map 
\[
\mathfrak{h}^* / W(\mathfrak{g}_{\C},\mathfrak{h}_{\C}) \longrightarrow 
\mathfrak{a}^*/  W(\mathfrak{g},\mathfrak{a})
\]
(involving the maximal abelian subspace $\mathfrak{a}$ from the Iwasawa decomposition \eqref{eq-fixed-iwasawa-decomp}) that extracts from $\InfCh(\pi)$  what may be called the \emph{imaginary part of the infinitesimal character},
\[
\ImInfCh(\pi) \in \mathfrak{a}^*/  W(\mathfrak{g},\mathfrak{a}).
\]
See for instance \cite{BraddHigsonYuncken24} for an exposition of this concept. 
 
Theorem~\ref{thm-afgoustidis-characterization-of-tempered-reps-with-fixed-im-inf-ch} below  is the first fundamental result  required for the Mackey bijection. It is a version for tempered representations of a more general result, due to Knapp \cite{KnappRepTheorySemisimpleGroups} and Vogan \cite{Vogan00}, about general irreducible unitary representations, beyond tempered.   Again, see \cite{BraddHigsonYuncken24} for an exposition. 
 
\begin{definition}
\label{def-real-infinitesimal-character}
An irreducible unitary representation  $\tau$ of a real reductive group is said to have \emph{real infinitesimal character} if
\[
\ImInfCh (\tau) =0\in \mathfrak{a}^*/W(\mathfrak{g}, \mathfrak{a} ) .
\]
\end{definition}

\begin{definition}
A unitary representation of a real reductive group is \emph{tempiric}   if it is tempered, irreducible, and has real infinitesimal character in the sense of Definition~\ref{def-real-infinitesimal-character}.
\end{definition}

\begin{definition}
\label{def-equivalence-class-of-nu}
    Let $\nu \in \mathfrak{a}^*$. (i) We shall denote by $[\nu]$ the image of $\nu$ in the quotient  $\mathfrak{a}^*/W(\mathfrak{g}, \mathfrak{a} )$. (ii) By the \emph{centralizer of $\nu$ in $G$} we shall mean the centralizer in $G$ of the vector in $\mathfrak{a}$ that corresponds to $\nu$ under the isomorphism $\mathfrak{a}\cong \mathfrak{a}^*$   given by the inner product on  $\mathfrak{a}$ from  \eqref{eq-Cartan-form}. (iii) If $\mathfrak{a}_P\subseteq \mathfrak{a}$ and $\nu\in \mathfrak{a}_P^*$, then we  extend $\nu$ to a linear functional $\tilde \nu \in \mathfrak{a}^*$ that is zero on the orthogonal complement of $\mathfrak{a}_P$ in $\mathfrak{a}$, and set $[\nu]=[\tilde \nu] $. We also define the centralizer of $\nu$ in $G$ to be the centralizer of $\tilde \nu$. 
\end{definition}

\begin{theorem}[c.f.~{\cite[Proof of Thm.~3.5(c)]{AfgoustidisMackeyBijection}}]
    \label{thm-afgoustidis-characterization-of-tempered-reps-with-fixed-im-inf-ch}
Let  $\nu\in  \mathfrak{a}^*$, and let $L $ be the centralizer of $\nu$ in $G$. Let  $P{=}L_PN_P$ be the standard parabolic subgroup of $G$ whose Levi factor is $L_P=L$. The correspondence 
\[
\tau \longmapsto \Ind_P^G e^{i\nu}\cdot \tau  
\]
\textup{(}see \eqref{eq-notation-e-to-the-i-nu-times-tau-1} for the notation $ e^{i\nu}\cdot \tau$\textup{)} determines a bijection from the set of unitary equivalence classes of tempered irreducible unitary representations of $L_P$ with real infinitesimal character to the set of unitary  equivalence classes of tempered irreducible  unitary representations $\pi$ of $G$ for which $\ImInfCh(\pi) = [\nu]$. 
\end{theorem}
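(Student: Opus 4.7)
My plan is to follow a Harish-Chandra / Knapp-Stein three-step argument: verifying (i) well-definedness (the induced representation $\pi := \Ind_P^G(e^{i\nu}\cdot\tau)$ is tempered, irreducible, and has $\ImInfCh(\pi)=[\nu]$), then (ii) injectivity and (iii) surjectivity. The heart of the matter is the irreducibility statement in (i); the workhorse is induction in stages (Lemma~\ref{lem-induction-in-stages}) combined with the R-group structure of Theorem~\ref{thm-Rgroup-sl2s}.

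For (i), temperedness is standard: unitary parabolic induction with a real continuous parameter preserves temperedness. The equality $\ImInfCh(\pi)=[\nu]$ follows from the compatibility of parabolic induction with the Harish-Chandra isomorphism, which shows $\InfCh(\pi)$ is the image of $\InfCh(\tau)+i\nu$; since $\tau$ has real infinitesimal character, the imaginary part is $[\nu]$. For irreducibility, I would first realize $\tau$ as an irreducible summand of $\Ind_{P_0}^{L_P}\sigma_0$ for some cuspidal parabolic $P_0=L_{P_0}N_{P_0}\subseteq L_P$ and discrete series $\sigma_0$ of $M_{P_0}$ (no continuous twist appears, because $\tau$ is tempiric). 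Induction in stages then yields a containment
\[
\pi \;\subseteq\; \Ind_P^G\bigl(e^{i\nu}\cdot\Ind_{P_0}^{L_P}\sigma_0\bigr) \;\cong\; \Ind_Q^G(\sigma_0\otimes e^{i\nu}),
\]
where $Q=P_0 N_P$ is cuspidal in $G$ with Levi $L_Q=L_{P_0}$, and $\nu$ is viewed in $\mathfrak{a}_{P_0}^*\supseteq\mathfrak{a}_P^*$ by extension by zero.

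The decomposition of $\Ind_Q^G(\sigma_0\otimes e^{i\nu})$ is controlled, via Knapp-Stein theory, by the stabilizer $(W_{\sigma_0})_\nu$ of $\nu$ in the intertwining group $W_{\sigma_0}$ and its R-group quotient. Because $L_P$ is the centralizer of $\nu$ in $G$, the reflections in the restricted Weyl group $W(G,\mathfrak{a}_{P_0})$ that fix $\nu$ are precisely the reflections in restricted roots that vanish on $\nu$, i.e.\ in roots of $L_P$. It follows that $(W_{\sigma_0})_\nu = W_{\sigma_0}\cap W(L_P,\mathfrak{a}_{P_0})$, which is exactly the intertwining group of $\sigma_0$ considered inside $L_P$ and evaluated at parameter zero. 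Invoking the decomposition $W_{\sigma_0}=W_{0,\sigma_0}\rtimes R_{\sigma_0}$ from Theorem~\ref{thm-Rgroup-sl2s}, the (adjusted) Knapp-Stein intertwiner algebras for the upstairs data $(\sigma_0,\nu)$ and the downstairs data $(\sigma_0,0)$ become canonically identified, and this sets up a natural bijection between the irreducible summands of $\Ind_Q^G(\sigma_0\otimes e^{i\nu})$ and those of $\Ind_{P_0}^{L_P}\sigma_0$. Consequently each tempiric $\tau$ downstairs yields a single irreducible summand $\pi$ upstairs, proving irreducibility. The same bijection immediately yields injectivity (ii): distinct summands $\tau_1\not\simeq\tau_2$ yield distinct summands $\pi_1\not\simeq\pi_2$.

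For surjectivity (iii), let $\pi$ be tempered irreducible with $\ImInfCh(\pi)=[\nu]$. Harish-Chandra's subrepresentation theorem embeds $\pi$ in some cuspidal principal series $\Ind_{Q'}^G(\sigma_0\otimes e^{i\nu'})$, and the imaginary-part-of-infinitesimal-character hypothesis forces $\nu'$ to be Weyl-conjugate to $\nu$; after conjugating $(Q',\sigma_0)$ we may take $\nu'=\nu$. Since $L_P$ centralizes $\nu$ we have $L_{Q'}\subseteq L_P$ and $Q'\subseteq P$, and induction in stages rewrites the principal series as $\Ind_P^G\bigl(e^{i\nu}\cdot\Ind_{Q'\cap L_P}^{L_P}\sigma_0\bigr)$; the tempiric summand $\tau$ of the inner induction whose image is $\pi$ is the required preimage. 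The main obstacle throughout is the R-group identification in (i): establishing rigorously not just that the stabilizers $(W_{\sigma_0})_\nu$ and $W_{\sigma_0}^{L_P}$ coincide, but that the projective R-group representations classifying summands at $(\sigma_0,\nu)$ in $G$ match those at $(\sigma_0,0)$ in $L_P$. This is where the explicit realization of the generators $s_j$ of $S_{\sigma_0}$ via the $\mathfrak{sl}(2,\R)$-triples of Theorem~\ref{thm-Rgroup-sl2s} is decisive: those $s_j$ that come from roots \emph{outside} the root system of $L_P$ act non-trivially on $\nu$ and are therefore ruled out of the stabilizer, leaving only the inner R-group to govern reducibility.
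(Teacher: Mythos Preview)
The paper does not supply its own proof of this theorem; it is stated as a known result, with attribution to Afgoustidis, Knapp, and Vogan, and a pointer to \cite{BraddHigsonYuncken24} for an exposition. So there is nothing in the paper to compare your argument against directly.

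Your outline is the standard Knapp--Stein/Harish-Chandra route and is correct in shape: realize $\tau$ inside a cuspidal principal series for $L_P$, induce in stages to a cuspidal principal series for $G$, and match the decompositions via the intertwining algebras. The step that needs more than you have written is the identification you yourself flag as the ``main obstacle''. Two points deserve care. First, the equality $(W_{\sigma_0})_\nu = W_{\sigma_0}^{L_P}$: the inclusion $W_{\sigma_0}^{L_P}\subseteq (W_{\sigma_0})_\nu$ is immediate since $L_P$ centralizes $\nu$, but the reverse needs the fact that the stabilizer of $\nu$ in $N_K(\mathfrak{a}_{P_0})/Z_K(\mathfrak{a}_{P_0})$ is exactly $N_{K\cap L_P}(\mathfrak{a}_{P_0})/Z_{K\cap L_P}(\mathfrak{a}_{P_0})$. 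For the minimal parabolic this is Chevalley's theorem on stabilizers in finite reflection groups; for a general cuspidal $P_0$ the group $W(G,\mathfrak{a}_{P_0})$ is not itself a Coxeter group on $\mathfrak{a}_{P_0}$, so one must lift to the minimal level or quote the structure theory of $W(G,A_{P_0})$.

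Second, and more substantively, matching the stabilizers is not the same as matching the $R$-groups and their projective representations on the intertwining algebras. One must check that the root system $\overline{\Delta}_0$ and the set $\overline{\Delta}_S$ of Theorem~\ref{thm-Rgroup-sl2s}, when intersected with the roots of $L_P$, reproduce the corresponding objects computed for $\sigma_0$ inside $L_P$, and that the $2$-cocycle governing the projective action agrees. Your final sentence about the generators $s_j$ from roots outside $L_P$ moving $\nu$ points in the right direction, but note that the $s_j$ generate $S_{\sigma_0}$, which in general strictly contains $R_{\sigma_0}$; the argument has to be made for $R_{\sigma_0}$ (equivalently, for the positive system $\overline{\Delta}_0^+$) directly. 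This compatibility is precisely the technical content of the Knapp--Zuckerman results underlying Afgoustidis's argument, so you have correctly located the crux without quite dispatching it.
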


\begin{remark} 
In the theorem, the restriction of $\nu$ to the orthogonal complement of $\mathfrak{a}_P\subseteq \mathfrak{a}$ is zero, so we may equally well view $\nu$ as an element of $\mathfrak{a}_P^*$. 
\end{remark}

\subsection{Minimal K-types of tempiric representations}
The second  fundamental result that is required for the Mackey bijection---and that is the most difficult---is Theorem~\ref{thm-vogan-classification-of-tempiric-reps} below, due to Vogan. All of the ingredients for the proof can be found in Vogan's monograph \cite{VoganGreenBook}. For the actual statement, see for instance  \cite[Thm.~1.2]{Vogan07}.

\begin{definition} 
\label{def-norm-of-a-rep-of-k}
Let $K$ be a compact group (not necessarily connected) and   let $T$ be a  maximal torus in the identity component of $K$.  Fix a system of positive roots for $(\mathfrak{k}, \mathfrak{t})$, and let $\rho_K$ be the associated half-sum of positive roots.  Fix a $K$-invariant inner product on $\mathfrak{k}$.  If $\theta$ is any irreducible representation of $K$, then we define 
\[
\| \theta\| = \| \mu {+} 2 \rho_K\|,
\]
where $\mu$ is any highest weight of $\theta$. 
\end{definition}

\begin{remark}
Because $K$ need not be connected in Definition~\ref{def-norm-of-a-rep-of-k}, the highest weight chosen there   need not be unique. But the norm $\| \theta\|$ is independent of the choice. Nor does the norm depend on any of the other choices made in Definition~\ref{def-norm-of-a-rep-of-k}, except for the choice of inner product.  The notion of minimality in the theorem below is independent even of the choice of norm.
\end{remark}

\begin{theorem}[Vogan]
\label{thm-vogan-classification-of-tempiric-reps}
Let $G$ be a real reductive group with maximal compact subgroup $K$.  Every tempiric representation of $G$ has a unique $K$-type $\theta$ for which $\| \theta \|$ is minimal among all $K$-types in the representation.  This $K$-type has multiplicity $1$.  Every irreducible representation of $K$ is the \emph{minimal $K$-type}, in this sense, of a unique tempiric representation of $G$, up to unitary equivalence.
\end{theorem}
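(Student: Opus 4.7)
The plan is to follow Vogan's approach in \cite{VoganGreenBook}, whose main ingredients are the classification of tempiric representations by discrete data and a careful norm computation on irreducible representations of $K$. By the Knapp--Zuckerman classification, together with the triviality of the $R$-group of Section~\ref{sec-r-groups} forced by the tempiric condition, every tempiric representation of $G$ arises as $\Ind_P^G \tau$, where $P$ is a cuspidal standard parabolic and $\tau$ is a limit of discrete series of $L_P$ with real infinitesimal character (no continuous $A_P$-parameter appears). The reduction to a compact Cartan in $L_P$ means that $\tau$ is itself determined by a discrete dataset, namely a Harish-Chandra-type parameter together with choices of positive root systems.

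The proof of existence and uniqueness of the minimal $K$-type proceeds in two steps. First, for the inducing representation $\tau$, the $(K \cap L_P)$-types are computed by the Blattner formula, proven in general by Hecht--Schmid. An analysis of highest weights identifies a unique $(K \cap L_P)$-type of minimal norm $\|\cdot\|$, occurring with multiplicity one, whose highest weight is obtained from the Harish-Chandra parameter of $\tau$ by standard $\rho_n - \rho_c$ shifts. Second, Frobenius reciprocity describes the $K$-types of $\Ind_P^G \tau$ as those $\theta \in \widehat{K}$ whose restriction to $K \cap L_P$ contains a $(K \cap L_P)$-type of $\tau$. Choosing a positive system for $\mathfrak{k}$ compatible with the one on $\mathfrak{k} \cap \mathfrak{l}_P$, one extends the distinguished $(K \cap L_P)$-highest-weight to a $K$-highest-weight and verifies that the resulting $K$-type occurs with multiplicity one and has strictly smaller norm than any other $K$-type of $\Ind_P^G \tau$.

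For the bijection with $\widehat{K}$, the construction is reversed: given $\theta \in \widehat{K}$, subtract the appropriate $\rho$-shift from its highest weight to produce discrete data for some cuspidal parabolic, and check that the resulting tempiric recovers $\theta$ as minimal $K$-type and that distinct data yield inequivalent tempirics. The main obstacle is the norm inequality in the second step above: while the \emph{existence} of a $K$-type containing the prescribed $(K \cap L_P)$-highest-weight is automatic, showing that it \emph{strictly} minimizes $\|\cdot\|$ among all $K$-types of the induced representation requires controlling both the other $(K \cap L_P)$-types contributed by Blattner's formula and the decomposition of $K$-restrictions to $K \cap L_P$. This rests on strong orthogonality properties of the relevant real root systems (compare Theorem~\ref{thm-Rgroup-sl2s}) and the precise combinatorics of the compact and noncompact $\rho$-shifts.
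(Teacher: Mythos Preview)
The paper does not give a proof of this theorem: it is stated as a result of Vogan, with a pointer to \cite{VoganGreenBook} for the ingredients and to \cite[Thm.~1.2]{Vogan07} for the precise statement. So there is no ``paper's own proof'' to compare against; your outline is in effect a sketch of the argument that the paper is content to cite.

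As a sketch of Vogan's route it is broadly accurate, but one point deserves correction. The phrase ``triviality of the $R$-group \ldots\ forced by the tempiric condition'' is misleading. The tempiric condition (real infinitesimal character, hence continuous parameter zero) does not make the $R$-group of a cuspidal principal series trivial; on the contrary, it is exactly at $\nu=0$ that the $R$-group is largest, and a tempiric representation is in general a proper summand of $\pi_{P,\sigma,0}$. What the Knapp--Zuckerman classification gives you is that each such summand can be rewritten as a \emph{fully} induced representation $\Ind_{P'}^G \tau$ from a nondegenerate limit of discrete series $\tau$ on a (possibly different) Levi $L_{P'}$, with the $R$-group for \emph{that} induction trivial. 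So the irreducibility of $\Ind_{P'}^G \tau$ is a conclusion of Knapp--Zuckerman, not a consequence of the tempiric hypothesis alone. This does not derail your outline, but the logical order should be fixed.

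Beyond that, your description is a fair summary of the machinery involved (Blattner, Frobenius reciprocity, the $\rho_n - \rho_c$ shift), though as you acknowledge the hard part---the strict norm minimality of the distinguished $K$-type---is only gestured at. That is appropriate here, since the paper itself defers the entire argument to the literature.
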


\subsection{The Cartan motion group}
In view of the Cartan decomposition  of $\mathfrak{g} = \mathfrak{k} \oplus \mathfrak{s}$, the group $G_0=K \ltimes (\mathfrak{g}/\mathfrak{k})$ in \eqref{eq-fiber-of-the-smooth-family} may be written as 
\[
G_0 \cong K \ltimes \mathfrak{s}.
\]
This is the \emph{Cartan motion group} associated to $G$ (and to the given Cartan decomposition).

Mackey proved that the irreducible unitary representations of the Cartan motion group $G_0$ (or indeed of any semidirect product of a compact group acting on a vector group) all arise from the following construction.

\begin{definition} 
    \label{def-rho-theta-nu}
    Let  $\nu\in \mathfrak{s}^*$, and let $\theta$ be an irreducible unitary representation of the isotropy group $K_\nu$ in $K$. We shall denote by $\rho_{\theta, \nu}$ the  following  unitary representation of the group $G_0$:
\begin{equation*}
\rho_{\theta, \nu}  = \Ind_{K_\nu\ltimes \lie{s}} ^{K\ltimes \lie{s}} \theta \otimes e^{ i  \nu} \colon K\ltimes \mathfrak{s} \longrightarrow U \bigl (L^2 (K, H_\theta)^{K_{\nu}}\bigr ).
\end{equation*}
Here, $\theta\otimes e^{i \nu} \colon (k,X)\longmapsto e^{i \nu (X)}\theta(k)$.\end{definition} 

\begin{remark}
\label{rem-extended-rho-notation}
Later we shall use the same notation $\rho_{\theta,\nu}$ in cases where  $\theta$ is \emph{any} unitary representation of $K_\nu$, irreducible or not.
\end{remark}

\begin{theorem}[{\cite[Sec.\,7]{Mackey49}}]
The representation $\rho_{\theta,\nu}$ above is   {irreducible}. Every irreducible unitary representation of $G_0$ is equivalent to some $\rho_{\theta,\nu}$, and  $\rho_{\theta,\nu}$ is unitarily equivalent to $\rho_{\theta',\nu'}$ if and only if the  pairs $(\theta,\nu)$ and $(\theta',\nu')$  are $K$-conjugate.
\end{theorem}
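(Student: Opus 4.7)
The plan is to apply Mackey's analysis of the unitary dual of a semidirect product of a compact group acting on an abelian one, exploiting the fact that compactness of $K$ forces the orbits on $\mathfrak{s}^*$ to be closed, so that the regularity hypotheses of the Mackey machine are automatic. The essential tool is the Stone--Naimark--Ambrose--Godement (SNAG) theorem, which decomposes any unitary representation of the abelian group $\mathfrak{s}$ into a projection-valued measure on $\widehat{\mathfrak{s}} \cong \mathfrak{s}^*$, combined with the covariance relation $\pi(k) E(B) \pi(k)^{-1} = E(k \cdot B)$ that comes from the semidirect product structure.

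To establish irreducibility of $\rho_{\theta,\nu}$, I would work in the compact model $L^2(K, H_\theta)^{K_\nu}$. A short Fourier computation shows that the SNAG measure of the restriction $\rho_{\theta,\nu}|_{\mathfrak{s}}$ is supported on the orbit $K \cdot \nu$, with the fibre over $k \cdot \nu$ canonically isomorphic to $H_\theta$. Any bounded operator $T$ in the commutant of $\rho_{\theta,\nu}$ commutes with $\mathfrak{s}$, hence is decomposable with respect to this measure, and commutes with $K$, hence is $K$-equivariant. These two conditions together force $T$ to be determined by its action on the fibre above $\nu$, where it must intertwine $\theta$ with itself as a $K_\nu$-representation, so by Schur's lemma $T$ is a scalar multiple of the identity.

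For the second assertion, I would start with an arbitrary irreducible $\pi$ on a Hilbert space $H$, apply SNAG to $\pi|_{\mathfrak{s}}$ to obtain a projection-valued measure $E$, and use covariance to see that $\operatorname{supp}(E)$ is $K$-invariant. If $\operatorname{supp}(E)$ met more than one orbit, closedness of orbits would yield a non-trivial $K$-invariant Borel partition and hence a decomposition of $H$, contradicting irreducibility; so $\operatorname{supp}(E) = K \cdot \nu$ for some $\nu \in \mathfrak{s}^*$. Identifying $H$ with an $L^2$-space of sections of the associated Hilbert bundle over the orbit and using $K$-equivariance then yields $H \cong L^2(K, H_\nu)^{K_\nu}$ with $K_\nu$ acting on $H_\nu$ by some representation $\theta$, which is necessarily irreducible; unwinding the identifications gives $\pi \simeq \rho_{\theta,\nu}$. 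The classification of equivalences then follows from a change-of-variables argument: $K$-conjugate pairs produce equivalent representations by translation of functions on $K$, and conversely any unitary equivalence matches the SNAG orbit-supports, so the orbits of $\nu$ and $\nu'$ coincide, after which translation reduces the problem to an equivalence of $\theta$ with $\theta'$ as $K_\nu$-representations.

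The one delicate step is the orbit-support reduction, in which one must show that an irreducible representation's SNAG measure cannot spread across several orbits. This is precisely where compactness of $K$ is indispensable, since it ensures closed orbits and a well-separated orbit space, bypassing the pathologies that attend the general Mackey machine when orbits fail to be locally closed.
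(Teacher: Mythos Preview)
The paper does not supply its own proof of this theorem; it simply cites Mackey's 1949 paper as the source. Your outline is a correct reconstruction of the standard Mackey-machine argument for a semidirect product of a compact group by a vector group (SNAG decomposition of the abelian part, covariance forcing the spectral measure onto a single orbit by regularity, and the imprimitivity theorem identifying the result with an induced representation), and this is in essence the argument of the cited reference.
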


Now, the given inner product on $\mathfrak{s}$ determines an inner product on $\mathfrak{s}^*$ and then a bijection
\[
\mathfrak{a}^*/W(\mathfrak{g}, \mathfrak{a})
\stackrel \cong \longrightarrow \mathfrak{s}^* / K 
\]
in which a linear functional $\mathfrak{a}$  is extended to $\mathfrak{s}$ by requiring it to be zero on the orthogonal complement of $\mathfrak{a}\subseteq \mathfrak{s}$. Using the bijection  we obtain:

\begin{theorem}
\label{thm-final-mackey-description-of-motion-group-dual}
Let $G$ be a real reductive group with Lie-algebraic Cartan decomposition $\mathfrak{g} = \mathfrak{k}\oplus \mathfrak{s}$, and let $\mathfrak{a}$ be a maximal abelian subspace of $\mathfrak{s}$.  There is a unique bijection
\[
\widehat {K\ltimes \mathfrak{s}} \cong  \Bigl ( \bigsqcup _{\nu \in \mathfrak{a}^*} \widehat K_\nu \Bigr ) \Big / W(\mathfrak{g},\mathfrak{a})
\]
that corresponds the equivalence class of the representation $\rho_{\theta,\nu}$ in Definition~\ref{def-rho-theta-nu}    to the class of $(\theta, \nu)$.
\qed
\end{theorem}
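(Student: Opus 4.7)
The plan is to deduce the statement directly from Mackey's classification, recalled just above, together with the orbit-space identification $\mathfrak{a}^*/W(\mathfrak{g},\mathfrak{a}) \cong \mathfrak{s}^*/K$ that is invoked before the statement. Mackey's theorem gives a bijection
\[
\Bigl(\bigsqcup_{\nu\in\mathfrak{s}^*}\widehat{K_\nu}\Bigr)\Big/K \;\stackrel{\sim}{\longrightarrow}\; \widehat{K\ltimes\mathfrak{s}}, \qquad (\theta,\nu)\longmapsto [\rho_{\theta,\nu}],
\]
where $K$ acts on pairs by $k\cdot(\theta,\nu) = (\theta\circ\mathrm{Ad}(k^{-1}),\mathrm{Ad}^*(k)\nu)$. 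So it suffices to exhibit a natural bijection between the $K$-orbit space on the full disjoint union and the $W(\mathfrak{g},\mathfrak{a})$-orbit space on the slice indexed by $\nu\in\mathfrak{a}^*$, and then to check that $(\theta,\nu)\mapsto [\rho_{\theta,\nu}]$ is compatible with this identification.

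First I would show surjectivity of the natural map from the slice to the full orbit space. Because $\mathfrak{a}$ is a maximal abelian subspace of $\mathfrak{s}$, one has the classical decomposition $\mathfrak{s} = \bigcup_{k\in K}\mathrm{Ad}(k)\mathfrak{a}$, which transports via the $K$-invariant inner product to the analogous statement for $\mathfrak{s}^*$. Thus every pair $(\theta,\nu)$ with $\nu\in\mathfrak{s}^*$ is $K$-conjugate to a pair whose second component lies in $\mathfrak{a}^*$. Next I would verify injectivity. Suppose $(\theta_1,\nu_1)$ and $(\theta_2,\nu_2)$ both lie in the slice and satisfy $(\theta_2,\nu_2)=k\cdot(\theta_1,\nu_1)$ for some $k\in K$. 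Then $\mathrm{Ad}^*(k)$ carries $\nu_1\in\mathfrak{a}^*$ to $\nu_2\in\mathfrak{a}^*$; by the classical fact $N_K(\mathfrak{a})/Z_K(\mathfrak{a}) = W(\mathfrak{g},\mathfrak{a})$, the element $k$ may be modified by a factor from $K_{\nu_1}$ so as to lie in $N_K(\mathfrak{a})$, yielding a genuine Weyl-group element carrying $(\theta_1,\nu_1)$ to $(\theta_2,\nu_2)$.

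The remaining point is that the $W(\mathfrak{g},\mathfrak{a})$-action on the slice is well defined on equivalence classes. A Weyl element is represented by $k\in N_K(\mathfrak{a})$ modulo $Z_K(\mathfrak{a})$; since $Z_K(\mathfrak{a})$ centralizes $\mathfrak{a}$ it is contained in $K_\nu$ for every $\nu\in\mathfrak{a}^*$, and its action on $\widehat{K_\nu}$ by $\mathrm{Ad}$ is by inner automorphism of $K_\nu$, hence trivial on unitary equivalence classes. This gives the desired bijection on orbit spaces, and composing with Mackey's classification yields the bijection in the statement. Uniqueness is immediate from the requirement that $[(\theta,\nu)]\mapsto [\rho_{\theta,\nu}]$, since these classes exhaust the right-hand side.

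The only delicate bookkeeping is the well-definedness of the $W(\mathfrak{g},\mathfrak{a})$-action on the disjoint union — where the fibres $\widehat{K_\nu}$ vary with $\nu$ — and this is precisely what the inclusion $Z_K(\mathfrak{a})\subseteq K_\nu$ takes care of. Everything else reduces to standard facts about symmetric pairs and to the Mackey machine already cited.
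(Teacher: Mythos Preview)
Your argument is correct and follows exactly the route the paper intends: the theorem is stated with a bare \qed, since it is meant as an immediate consequence of Mackey's classification together with the orbit-space identification $\mathfrak{a}^*/W(\mathfrak{g},\mathfrak{a})\cong\mathfrak{s}^*/K$ recorded just before the statement; you have simply unpacked that identification at the level of pairs $(\theta,\nu)$. One small remark: in the injectivity step, the fact you actually need is that two elements of $\mathfrak{a}^*$ which are $K$-conjugate are already $W(\mathfrak{g},\mathfrak{a})$-conjugate, which is precisely the injectivity half of $\mathfrak{a}^*/W\cong\mathfrak{s}^*/K$ rather than merely the identity $N_K(\mathfrak{a})/Z_K(\mathfrak{a})=W(\mathfrak{g},\mathfrak{a})$; but this is only a matter of citing the right classical fact, and the rest of your bookkeeping (in particular the $Z_K(\mathfrak{a})\subseteq K_\nu$ observation) is exactly what is required.
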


\subsection{Afgoustidis's Mackey bijection}

The Mackey bijection of Afgoustidis \cite{AfgoustidisMackeyBijection} is  easily constructed from the ingredients that we have reviewed above.  

\begin{theorem}[{\cite[Thm.~3.5]{AfgoustidisMackeyBijection}}]
There is a unique bijection 
\begin{equation*}
\widehat G_0  \stackrel \cong \longrightarrow  \Gtemp
\end{equation*}
that maps the equivalence class of the representation $\rho_{\theta,\nu}$ in Definition~\textup{\ref{def-rho-theta-nu}} to the equivalence class of the irreducible tempered unitary representation 
\[
\Ind_P^G e^{i\nu}\cdot \tau ,
\]
where $P$ is any parabolic subgroup whose Levi factor $L$  is the centralizer of $\nu$, and $\tau$ is a tempiric representation of $L$ whose minimal $K$-type is $\theta$.  
\end{theorem}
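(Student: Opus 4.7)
The proof assembles three previously stated structural theorems into a single correspondence: Mackey's parametrization of $\widehat{G_0}$ (Theorem~\ref{thm-final-mackey-description-of-motion-group-dual}), Vogan's classification of tempiric representations (Theorem~\ref{thm-vogan-classification-of-tempiric-reps}), and Afgoustidis's description of tempered irreducibles with fixed imaginary infinitesimal character (Theorem~\ref{thm-afgoustidis-characterization-of-tempered-reps-with-fixed-im-inf-ch}). The plan is to define the map by a construction that proceeds through the Levi $L = Z_G(\nu)$, and then to invoke the three theorems to check bijectivity.

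Concretely, starting from a pair $(\theta,\nu)$ with $\nu \in \mathfrak{a}^*$ and $\theta \in \widehat{K_\nu}$, I would first replace $\nu$ by a $W(\mathfrak{g},\mathfrak{a})$-conjugate lying in the closed positive Weyl chamber; this does not alter the equivalence class of $\rho_{\theta,\nu}$ by Theorem~\ref{thm-final-mackey-description-of-motion-group-dual}. Then $L = Z_G(\nu)$ is the Levi factor of a unique standard parabolic $P$, with maximal compact subgroup $K \cap L = K_\nu$. Vogan's theorem, applied to the real reductive group $L$, produces a unique tempiric representation $\tau$ of $L$ whose minimal $K_\nu$-type is $\theta$, and Theorem~\ref{thm-afgoustidis-characterization-of-tempered-reps-with-fixed-im-inf-ch} then identifies $\Ind_P^G(e^{i\nu}\cdot\tau)$ as an irreducible tempered representation of $G$ with $\ImInfCh = [\nu]$. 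This defines the map $\widehat{G_0}\to \Gtemp$, and running each ingredient in reverse on the right-hand side gives an inverse: every tempered irreducible has a well-defined imaginary infinitesimal character $[\nu]$, determining $L$ and thence, via Theorems~\ref{thm-afgoustidis-characterization-of-tempered-reps-with-fixed-im-inf-ch} and \ref{thm-vogan-classification-of-tempiric-reps}, a unique pair $(\theta,\nu)$ modulo $W(\mathfrak{g},\mathfrak{a})$.

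The main technical obstacle is to verify that the construction descends to $W(\mathfrak{g},\mathfrak{a})$-orbits, i.e.\ that $(\theta,\nu)$ and $(w\cdot\theta, w\cdot\nu)$ give unitarily equivalent representations of $G$. For this I would use the Knapp--Stein intertwining theory reviewed in Section~\ref{sec-intertwining-groups}: any representative of $w$ in $N_K(\mathfrak{a})$ conjugates the Levi-parabolic pair associated to $\nu$ to the one associated to $w\nu$, and conjugation by this element carries the tempiric datum $\tau$ (with minimal $K_\nu$-type $\theta$) to one with minimal $K_{w\nu}$-type $w\theta$; the Knapp--Stein operators then supply an explicit unitary equivalence between the two induced representations. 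Uniqueness of the bijection is immediate, since by Theorem~\ref{thm-final-mackey-description-of-motion-group-dual} the classes of the $\rho_{\theta,\nu}$ exhaust $\widehat{G_0}$ and the prescribed formula fixes the image of each.
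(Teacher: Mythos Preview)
Your proposal is correct and matches the paper's approach. The paper does not write out an explicit proof of this theorem; it cites \cite[Thm.~3.5]{AfgoustidisMackeyBijection} and simply states the result after having laid out, in the preceding subsections, precisely the three ingredients you invoke (Theorems~\ref{thm-final-mackey-description-of-motion-group-dual}, \ref{thm-vogan-classification-of-tempiric-reps}, and \ref{thm-afgoustidis-characterization-of-tempered-reps-with-fixed-im-inf-ch}). Your assembly of those ingredients into a bijection, together with the Weyl-group well-definedness check, is exactly the construction the paper is reviewing.
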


\section{Rescaling automorphisms of the group C*-algebra}
\label{sec-rescaling-automorphisms}

\subsection{Rescaling automorphisms of  the tempered dual} 
\label{sec-rescaling-maps-on-tempered-dual}
The following remarks are meant to serve as an informal motivation for the construction that we shall carry out in the rest of Section~\ref{sec-rescaling-automorphisms}.

Theorem~\ref{thm-afgoustidis-characterization-of-tempered-reps-with-fixed-im-inf-ch} shows that every tempered irreducible unitary representation of $G$ is equivalent to   an essentially unique  $\Ind_P^G e^{i\nu}\cdot \tau$,  where  $P=L_PN_P=M_PA_PN_P$ is a standard parabolic subgroup whose Levi factor is the centralizer of $\nu$, and where $\tau$ is a tempiric representation of $L_P$.  With this, we may define \emph{rescaling automorphisms} 
\begin{equation*}
    \label{eq-rescaling-maps-on-tempered-dual}
\begin{gathered}
\alpha_t \colon \Gtemp
\longrightarrow \Gtemp 
\\
 \alpha_t \colon \bigl [\Ind_P^G e^{i\nu}\cdot\tau\bigr ] \longmapsto  \bigl [\Ind_P^G e^{it^{-1}\nu}\cdot\tau\bigr ]
\end{gathered}
\qquad \forall t>0 .
\end{equation*}
These are bijections from the tempered dual to itself. In fact they are homeomorphisms.\footnote{We shall not make use of the fact that the rescaling maps are continuous, but the tempered dual, as a topological space, is the spectrum of the reduced group $C^*$-algebra, and continuity of the rescaling map can be deduced from what follows.} 

In effect, our  goal in this section will be to lift the rescaling automorphisms   above  from the tempered dual, which may be identified with the spectrum of the reduced group $C^*$-algebra $C^*_r(G)$ (see below), to automorphisms of the $C^*$-algebra itself.  But our initial focus will be on cuspidal parabolic induction, rather than parabolic induction from more general tempiric representations: we shall construct a   one-parameter group of $C^*$-algebra automorphisms 
\[
\alpha_t \colon C^*_r (G) \longrightarrow C^*_r (G) \qquad (t>0)
\]
such that 
\[
\pi_{P,\sigma,\nu}\circ \alpha_t \simeq \pi_{P,\sigma,t^{-1}\nu}
\]
for every $t>0$, and every $\pi_{P,\sigma,\nu}$.  We shall eventually return to parabolic induction from tempiric representations, but only in Section~\ref{sec-bijection-characterization}; see Lemma~\ref{lem-composition-of-pi-with-alpha-1}.

\subsection{Structure of the reduced group C*-algebra} 
\label{sec-structure-of-the-c-star-algebra}
Let $G$ be  a real reductive group.  In this section we shall quickly review some results from \cite{CCH16} and \cite{ClareHigsonSongTang24} about the structure of the reduced $C^*$-algebra of $G$.

Let $\pi$ be a tempered and admissible unitary representation of $G$. The formula
\begin{equation}
    \label{eq-integration-of-a-rep}
\pi (\xi)\phi=\int_G \xi(g) \pi (g)\phi \qquad \forall \xi\in C_c^\infty (G),\,\,\forall \phi \in H_\pi.
\end{equation}
defines an associated representation of the associative convolution algebra $C_c^\infty (G)$ as bounded, and in fact compact, operators on the Hilbert space $H_\pi$.  Now, as usual, denote by $C^*_r(G)$ the norm-comp\-l\-etion of $C_c^\infty (G)$ as a convolution algebra of operators on $L^2 (G)$; this is the reduced group $C^*$-algebra of $G$ \cite[Sec.\,7.2]{Pedersen79}. The  representation \eqref{eq-integration-of-a-rep} extends to a representation of $C^*_r(G)$ as compact operators on $H_\pi$,
\begin{equation}
    \label{eq-integration-to-a-c-star-rep}
\pi  \colon C^*_r (G) \longrightarrow \mathfrak{K} (H_\pi ),
\end{equation}
and in this way a bijection is determined from unitary equivalence classes of tempered irreducible unitary representations of $G$ to irreducible representations of $C^*_r (G)$ \cite{CowlingHaagerupHowe}. In other words Harish-Chandra's tempered dual is precisely the same as the $C^*$-algebraic reduced dual.

The occurrence of tempered representations in families leads to more elaborate representations than \eqref{eq-integration-to-a-c-star-rep}:

\begin{theorem}[See for example {\cite[Cor.\;4.12]{CCH16}}]
\label{thm-reductive-riemann-lebesgue}
Let $G$ be  a real reductive group, let $P=M_PA_PN_P$ be a parabolic subgroup, and let $\sigma $ be a discrete series representation of $M_P$.   There is a  $C^*$-algebra homomorphism
\[
  \pi_{P,\sigma} \colon C_r^*(G) \longrightarrow C_0(\mathfrak{a}_P^*,\compop(\Ind H_\sigma )  )
\]
\textup{(}the target is the $C^*$-algebra of norm-continuous functions on $\mathfrak{a}_P^*$, vanishing at infinity,  with values in the compact operators on $\Ind H_\sigma$\textup{)} such that $\pi_{P,\sigma}(\xi)(\nu) = \pi_{P,\sigma,\nu}(\xi)$  for every $\nu\in \lie{a}^*_P$.
\end{theorem}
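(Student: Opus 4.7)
The plan is to construct $\pi_{P,\sigma}$ first on the dense $*$-subalgebra $C_c^\infty(G) \subseteq C_r^*(G)$ via the formula $\pi_{P,\sigma}(\xi)(\nu) = \pi_{P,\sigma,\nu}(\xi)$, verify that the image lies in $C_0(\mathfrak{a}_P^*,\compop(\Ind H_\sigma))$, and then extend by continuity. Three things must be checked: (a) for each $\nu$ and each $\xi \in C_c^\infty(G)$, the operator $\pi_{P,\sigma,\nu}(\xi)$ is compact on $\Ind H_\sigma$; (b) the map $\nu \mapsto \pi_{P,\sigma,\nu}(\xi)$ is norm-continuous on $\mathfrak{a}_P^*$; and (c) it vanishes at infinity. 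Granted these, together with the uniform bound $\|\pi_{P,\sigma,\nu}(\xi)\| \le \|\xi\|_{C_r^*(G)}$---which holds because $\sigma$ is tempered and parabolic induction from a tempered representation is tempered, so $\pi_{P,\sigma,\nu}$ is weakly contained in the regular representation of $G$---the map $\pi_{P,\sigma}$ is a bounded $*$-homomorphism on $C_c^\infty(G)$ into $C_0(\mathfrak{a}_P^*,\compop(\Ind H_\sigma))$, hence extends uniquely to all of $C_r^*(G)$.

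The heart of the argument is an analysis of the integral kernel of $\pi_{P,\sigma,\nu}(\xi)$ in the compact model. Using the Iwasawa-style decomposition $G = K L_P N_P$ and the explicit formula for the $G$-action on $L^2(K, H_\sigma)^{K \cap L_P}$, I would represent $\pi_{P,\sigma,\nu}(\xi)$ as an integral operator with an operator-valued kernel $\kappa_\xi(\nu; k, k')$ whose dependence on $\nu$ enters only through an exponential factor of the form $e^{-i\nu(H_P(\cdot))}$, where $H_P$ denotes the Iwasawa projection onto $\mathfrak{a}_P$. For $\xi \in C_c^\infty(G)$ the integrand defining $\kappa_\xi$ is smooth and compactly supported, so the kernel is jointly smooth in $(\nu, k, k')$. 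A Hilbert--Schmidt estimate on this kernel gives compactness of $\pi_{P,\sigma,\nu}(\xi)$, while smooth dependence of the kernel on $\nu$ gives operator-norm continuity of $\nu \mapsto \pi_{P,\sigma,\nu}(\xi)$.

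Vanishing at infinity is the step I expect to be the main obstacle, and is a Riemann--Lebesgue phenomenon. Since the $\nu$-dependence of $\kappa_\xi$ enters only through the exponential $e^{-i\nu(H_P(\cdot))}$, repeated integration by parts in the $\mathfrak{a}_P$-directions of $G$ (justified by smoothness and compact support of $\xi$) should yield an estimate
\[
\|\kappa_\xi(\nu; k, k')\|_{\operatorname{HS}(H_\sigma)} \le C_N (1+\|\nu\|)^{-N}
\]
uniformly in $(k, k')$ for every $N \ge 0$. Integrating the squared Hilbert--Schmidt norm of the kernel over $K \times K$ then forces $\|\pi_{P,\sigma,\nu}(\xi)\|_{\operatorname{HS}} \to 0$ as $\|\nu\| \to \infty$, hence operator-norm decay. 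Once (a), (b), (c) are in place on $C_c^\infty(G)$, the $C^*$-norm bound permits the unique extension to a $C^*$-algebra homomorphism $\pi_{P,\sigma}\colon C_r^*(G) \to C_0(\mathfrak{a}_P^*,\compop(\Ind H_\sigma))$ with the required property.
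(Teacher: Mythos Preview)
The paper does not prove this theorem; it is quoted from \cite[Cor.~4.12]{CCH16}, where the result is obtained by realizing parabolic induction as a functor at the $C^*$-algebra level via a Hilbert $C^*_r(L_P)$-module $C^*_r(G/N_P)$, so that the map $\pi_{P,\sigma}$ arises from the structure of $C^*_r(L_P)\cong C^*_r(M_P)\otimes C_0(\mathfrak{a}_P^*)$. Your direct kernel-based argument is more elementary and is largely correct, but one step needs attention.

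The Hilbert--Schmidt kernel argument tacitly assumes that the values $\kappa_\xi(\nu;k,k')$ are themselves Hilbert--Schmidt (or at least compact) operators on $H_\sigma$. When $P$ is minimal this is harmless, since $M_P$ is compact and $H_\sigma$ is finite-dimensional. For a general cuspidal parabolic, however, $H_\sigma$ is infinite-dimensional: after integrating out the $A_P$ and $N_P$ variables, the kernel takes the form $\sigma(\phi_{\nu,k,k'})$ for some $\phi_{\nu,k,k'}\in C_c^\infty(M_P)$, and the assertion that this operator is compact is not a consequence of the smoothness and compact support of $\xi$ alone. You need to invoke either Harish-Chandra's theorem that irreducible admissible representations send $C_c^\infty$ to trace-class operators, or the weaker fact that the type~I group $M_P$ is CCR so that $\sigma(C^*_r(M_P))\subseteq\mathfrak{K}(H_\sigma)$. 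With that in hand your argument goes through: norm-continuity of $(k,k')\mapsto\kappa_\xi(\nu;k,k')$ into $\mathfrak{K}(H_\sigma)$ gives compactness of $\pi_{P,\sigma,\nu}(\xi)$, and your Riemann--Lebesgue integration-by-parts in the $A_P$-variable yields uniform decay of $\sup_{k,k'}\|\kappa_\xi(\nu;k,k')\|$, from which a Schur-type bound (rather than a Hilbert--Schmidt estimate) already gives $\|\pi_{P,\sigma,\nu}(\xi)\|\to 0$.
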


The image of the $C^*$-algebra homomorphism in the theorem may be described using the Knapp-Stein theory of intertwining operators.  For $w\in N_K(L_P)$, define a homomorphism of $C^*$-algebras 
\[
a_{w,\sigma}\colon C_0\bigl (\mathfrak{a}_P^*, \Compacts(\Ind H_\sigma)\bigr )
\longrightarrow 
C_0\bigl (\mathfrak{a}_P^*, \Compacts(\Ind H_{w(\sigma)})\bigr ),
\]
by
\begin{equation}
    \label{eq-c-star-action-of-intertwiners}
a_{w,\sigma}(f)(w(\nu)) = \mathscr{A}_{w,\sigma,\nu} f(\nu) \mathscr{A}_{w,\sigma,\nu}^*
\qquad   \forall \nu \in \mathfrak{a}^*_P
\end{equation}
The cocycle relation \eqref{eq-cocycle-relation} for the Knapp-Stein operators implies that 
\begin{equation}
\label{eq-c-star-action-of-intertwiners-2}
a_{w_1,w_2(\sigma)}\circ a_{w_2,\sigma}= a_{w_1w_2,\sigma}
\end{equation}
In addition,  if $w\in W_\sigma$, then the formula \begin{equation}
    \label{eq-w-sigma-action-on-c-star-algebra-1}
w(f)(w(\nu)) = \mathscr{A}'_{w,\sigma,\nu} f(\nu) \mathscr{A}'{}^*_{\!\!\!w,\sigma,\nu}
\qquad   \forall \nu \in \mathfrak{a}^*_P,
\end{equation}
involving the adjusted Knapp-Stein operators in \eqref{eq-adjusted-knapp-stein-ops}, gives a well-defined\footnote{Recall that the adjusted Knapp-Stein operator $\mathscr{A}'_{w,\sigma,\nu}$ itself is well-defined only up to multiplication by a complex scalar of modulus $1$.} automorphism 
\begin{equation}
    \label{eq-w-sigma-action-on-c-star-algebra-2}
w\colon  C_0\bigl (\mathfrak{a}_P^*, \Compacts(\Ind H_\sigma)\bigr ) \longrightarrow  C_0\bigl (\mathfrak{a}_P^*, \Compacts(\Ind H_\sigma)\bigr ).
\end{equation}
We obtain in this way an action of $W_\sigma$ by automorphisms on the $C^*$-algebra $C_0 (\mathfrak{a}_P^*, \Compacts(\Ind H_\sigma) )$.

\begin{theorem}[See {\cite[Thm.\,6.8]{CCH16}}] 
\label{thm-structure-of-reduced-c=star-algebra-0}
The image of the $C^*$-algebra homomorphism in Theorem~\textup{\ref{thm-reductive-riemann-lebesgue}} is 
\[
  \pi_{P,\sigma}\bigl [C_r^*(G)\bigr ]   =   C_0  (\mathfrak{a}^*_P ,\compop(\Ind H_\sigma )  )^{W_\sigma} .
\]
\end{theorem}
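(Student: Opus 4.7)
The plan is to prove the two inclusions separately. The easier direction, $\pi_{P,\sigma}[C^*_r(G)] \subseteq C_0(\mathfrak{a}^*_P, \mathfrak{K}(\Ind H_\sigma))^{W_\sigma}$, reduces given Theorem~\ref{thm-reductive-riemann-lebesgue} to checking $W_\sigma$-invariance. For $w \in W_\sigma$ and $\xi \in C^*_r(G)$, the defining intertwining property of the Knapp--Stein operators gives
\[
\mathscr{A}_{w,\sigma,\nu}\,\pi_{P,\sigma,\nu}(\xi) = \pi_{P,w(\sigma),w(\nu)}(\xi)\,\mathscr{A}_{w,\sigma,\nu}.
\]
The unitary $E \colon H_{w(\sigma)} \to H_\sigma$ from \eqref{eq-equivalence-of-reps-e} is $M_P$-equivariant, and when applied fiberwise in the compact model it intertwines $\pi_{P,w(\sigma),w(\nu)}$ with $\pi_{P,\sigma,w(\nu)}$ (the $A_P$-factor $e^{iw(\nu)}$ is untouched by $E$). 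Composing with $E$ therefore yields
\[
\mathscr{A}'_{w,\sigma,\nu}\,\pi_{P,\sigma,\nu}(\xi)\,(\mathscr{A}'_{w,\sigma,\nu})^{*} = \pi_{P,\sigma,w(\nu)}(\xi),
\]
which is precisely the $W_\sigma$-invariance condition \eqref{eq-w-sigma-action-on-c-star-algebra-1}.

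For the reverse inclusion I would invoke Harish-Chandra's Plancherel theorem. The Plancherel decomposition of $L^2(G)$ presents $C^*_r(G)$ as an internal direct sum, indexed by association classes of cuspidal data $[P,\sigma]$, of mutually orthogonal $C^*$-subalgebras, with the summand corresponding to $[P,\sigma]$ equal to the image of $\pi_{P,\sigma}$. Harish-Chandra's wave packet construction shows that every smooth, compactly supported, $W_\sigma$-invariant function $F \colon \mathfrak{a}^*_P \to \mathfrak{K}(\Ind H_\sigma)$ occurs as $\pi_{P,\sigma}(\xi)$ for some $\xi \in C^*_r(G)$, realized essentially as an inverse Fourier transform along $\mathfrak{a}^*_P$ of Eisenstein integrals with coefficients $F$. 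Since such $F$ are norm-dense in $C_0(\mathfrak{a}^*_P, \mathfrak{K}(\Ind H_\sigma))^{W_\sigma}$ and the image of $\pi_{P,\sigma}$ is closed, the two coincide.

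The main obstacle is exactly this surjectivity onto the $W_\sigma$-invariants: Knapp--Stein theory establishes only the containment, and the reverse inclusion genuinely requires Plancherel-theoretic input. A Stone--Weierstrass-style alternative is available and avoids explicit wave packets: the image is a closed $C^*$-subalgebra of $C_0(\mathfrak{a}^*_P, \mathfrak{K}(\Ind H_\sigma))^{W_\sigma}$ that separates points of $\mathfrak{a}^*_P / W_\sigma$ (since the principal series representations attached to distinct generic $W_\sigma$-orbits are inequivalent) and acts irreducibly on each fiber (by irreducibility of $\pi_{P,\sigma,\nu}$ at generic $\nu$); combined with the standard description of the spectrum of $C_0(X, \mathfrak{K})^{W_\sigma}$-algebras, this forces the equality. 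Either route ultimately appeals to classical results that lie outside pure Knapp--Stein theory, and that is what makes this the substantive step; the clean execution pursued in \cite{CCH16} combines these ingredients with Arthur's form of the Plancherel formula.
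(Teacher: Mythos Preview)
The paper does not give its own proof of this theorem: it is stated with a citation to \cite[Thm.\,6.8]{CCH16} and no argument is supplied. So there is no proof in the paper to compare your proposal against.

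That said, your sketch is a faithful outline of how the result is established in \cite{CCH16}. The containment $\pi_{P,\sigma}[C^*_r(G)]\subseteq C_0(\mathfrak{a}^*_P,\mathfrak{K}(\Ind H_\sigma))^{W_\sigma}$ is exactly the Knapp--Stein intertwining computation you wrote down, and you have correctly identified that the reverse inclusion is the substantive step, requiring Plancherel-theoretic input (wave packets, or the Stone--Weierstrass variant you describe). Your own closing remark that ``the clean execution pursued in \cite{CCH16} combines these ingredients with Arthur's form of the Plancherel formula'' is accurate and is precisely why the present paper simply cites the result rather than reproving it.
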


\begin{definition}\label{def-associate}
Two pairs $(P_1,\sigma_1)$ and $(P_2,\sigma_2)$, each consisting of a parabolic subgroup of $G$ and a discrete series representation of the $M$-part of the parabolic subgroup, are said to be \emph{associate} if there exists an element of $K$ that conjugates the Levi factor of $P_1$ to the Levi factor of $P_2$, and conjugates the equivalence class of $\sigma_1$ to the equivalence class of $\sigma_2$.  
An equivalence class   under the relation in Definition~\ref{def-associate} is called an \emph{associate class}. 
\end{definition}

The morphisms in Theorem~\ref{thm-structure-of-reduced-c=star-algebra-0} may be combined to give a complete description of the reduced $C^*$-algebra of $G$, as follows.

\begin{theorem}[See {\cite[Thm.\,6.8]{CCH16}} again] 
\label{thm-structure-of-reduced-c=star-algebra-1}
The $C^*$-algebra homomorphisms in Theorem~\textup{\ref{thm-structure-of-reduced-c=star-algebra-0}} assemble into a $C^*$-algebra isomorphism
\[
 \bigoplus _{[P,\sigma]} \pi_{P,\sigma} \colon C_r^*(G)   \stackrel \cong \longrightarrow \bigoplus _{[P,\sigma]}   C_0  (\mathfrak{a}^*_{P},\compop(\Ind H_\sigma )  )^{W_\sigma}.
\]
The $c_0$-direct sum is taken over representatives of all the associate classes of pairs $(P,\sigma)$ for the group $G$.
\end{theorem}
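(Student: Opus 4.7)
The plan is to bootstrap from Theorem~\ref{thm-structure-of-reduced-c=star-algebra-0}, which identifies the image of each individual $\pi_{P,\sigma}$ with the $W_\sigma$-invariants in $C_0(\mathfrak{a}_P^*, \compop(\Ind H_\sigma))$, to the global isomorphism by exploiting Harish-Chandra's Plancherel decomposition of $L^2(G)$ into components indexed by associate classes of cuspidal data. First I would check that $\bigoplus \pi_{P,\sigma}$ takes values in the $c_0$-direct sum, meaning that $\|\pi_{P,\sigma}(\xi)\| \to 0$ across associate classes for every $\xi \in C^*_r(G)$; this reduces by density to elements of $C_c^\infty(G)$, where asymptotic estimates for matrix coefficients of cuspidal principal series supply the decay. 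Injectivity then follows from the subrepresentation theorem: every tempered irreducible embeds in some $\pi_{P,\sigma,\nu}$, so vanishing on all the $\pi_{P,\sigma,\nu}$ forces vanishing on every tempered irreducible representation, hence on $C^*_r(G)$.

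The main obstacle is surjectivity. Given a $W_\sigma$-invariant section $f \in C_0(\mathfrak{a}_P^*, \compop(\Ind H_\sigma))^{W_\sigma}$ for a single class $[P,\sigma]$, I must construct $\xi \in C^*_r(G)$ satisfying $\pi_{P,\sigma}(\xi) = f$ and $\pi_{P',\sigma'}(\xi) = 0$ for every other class $[P',\sigma']$. The natural construction, via Plancherel inversion, is to write $\xi$ as a wave packet obtained by integrating matrix coefficients of $\pi_{P,\sigma,\nu}$ against $f(\nu)$ with respect to the Plancherel density on $\mathfrak{a}_P^*$. The $W_\sigma$-invariance of $f$, combined with the equivariance \eqref{eq-c-star-action-of-intertwiners} of Knapp-Stein intertwiners and the compatible transformation of the Plancherel measure under $W_\sigma$, is precisely what is needed for the wave packet to be well-defined modulo the Weyl group action; disjointness of Plancherel supports for non-associate cuspidal pairs then gives $\pi_{P',\sigma'}(\xi) = 0$ on the other summands automatically.

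The most delicate point is to show that this wave packet actually lies in $C^*_r(G)$ and not merely in the weak closure of its image inside the bounded operators on $L^2(G)$. The standard remedy is to handle first a dense subalgebra of Schwartz or compactly supported sections, for which the wave packet manifestly lies inside Harish-Chandra's Schwartz algebra of $G$ (hence in $C^*_r(G)$), and then extend to general $c_0$-sections by continuity using the per-class norm estimates from Theorem~\ref{thm-structure-of-reduced-c=star-algebra-0}. The $R$-group structure of Theorem~\ref{thm-Rgroup-sl2s} plays an essential role throughout in tracking how the Plancherel density transforms under $W_\sigma$, so that $W_\sigma$-invariant sections correspond to bona fide wave packets rather than to distributional objects.
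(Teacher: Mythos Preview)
The paper does not give its own proof of this theorem; it simply cites \cite[Thm.\,6.8]{CCH16} (the theorem even carries the attribution ``See \cite[Thm.\,6.8]{CCH16} again'').  So there is no in-paper argument to compare your proposal against.

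That said, your outline is a reasonable high-level sketch of how this result is in fact established: the $c_0$-decay across associate classes, injectivity from the subrepresentation theorem, and surjectivity by wave-packet/Plancherel inversion are exactly the three ingredients.  Two comments on the details.  First, for the $c_0$-decay across classes one does not really need ``asymptotic estimates for matrix coefficients'': the cleaner route (and the one implicit later in the paper, see Theorem~\ref{thm-generalized-uniform-admissability}) is to observe that any fixed $K$-type occurs in only finitely many $\pi_{P,\sigma,0}$, so for $K$-finite $\xi$ only finitely many summands are nonzero.  Second, your final paragraph overstates the role of the $R$-group.  The theorem you are proving concerns $W_\sigma$-invariants, and the Plancherel density is $W_\sigma$-invariant; the semidirect decomposition $W_\sigma = W_{0,\sigma}\rtimes R_\sigma$ of Theorem~\ref{thm-Rgroup-sl2s} is not needed here.  (It becomes crucial only later, in Section~\ref{sec-rescaling-automorphisms}, for constructing the rescaling automorphisms---a different matter entirely.)
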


As we noted in Section~\ref{sec-r-groups}, the group $W_\sigma$ is a semidirect product $W_{0,\sigma}$ and $R_\sigma$ where $W_{0,\sigma}$ is a Weyl group. We can divide $\lie{a}^*_P$ into closed Weyl chambers for the action of $W_{0,\sigma}$, and choose one  (closed)  \emph{positive  chamber} 
\begin{equation}
\label{eq-positive-sigma-weyl-chamber}
\lie{a}^*_{P,+}\subseteq \lie{a}^*_{P} ,
\end{equation}
which is a fundamental domain for the action of $W_{0,\sigma}$ on $\lie{a}_P^*$.

\begin{remark}
The division of $\mathfrak{a}^*_P$ into  chambers depends on $\sigma$ \textit{via} $W_{0,\sigma}$, although this is not reflected in the notation \eqref{eq-positive-sigma-weyl-chamber}. 
\end{remark}

The  morphisms $\pi_{P,\sigma}$ in Theorem~\ref{thm-reductive-riemann-lebesgue} determine morphisms
\begin{equation}
    \label{eq-restricted-pi-sigma-morphisms}
  \pi_{P,\sigma} \colon C_r^*(G)  \longrightarrow   C_0  (\mathfrak{a}^*_{P,+},\compop(\Ind H_\sigma )  )^{R_\sigma}
\end{equation}
 by restriction to the positive  chamber. These also assemble into a $C^*$-algebra isomorphism, which is a variation on the isomorphism in Theorem~\ref{thm-structure-of-reduced-c=star-algebra-1}, as follows:

\begin{theorem}[See for instance {\cite[Sec.\,2]{ClareHigsonSongTang24}}]
\label{thm-structure-of-reduced-c=star-algebra-2}
Let $G$ be  a real reductive group.  The morphisms \eqref{eq-restricted-pi-sigma-morphisms}  determine a $C^*$-algebra isomorphism
\[
\bigoplus_{[P,\sigma]} \pi_{P,\sigma} \colon C_r^*(G)\stackrel \cong \longrightarrow  \bigoplus_{[P,\sigma]} C_0\bigl (\mathfrak{a}^*_{P,+},\compop(\Ind H_\sigma )\bigr )^{R_\sigma} .
\]
\end{theorem}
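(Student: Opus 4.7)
The plan is to deduce this statement from Theorem \ref{thm-structure-of-reduced-c=star-algebra-1} by showing that, for each associate class $[P,\sigma]$, the restriction of functions from $\mathfrak{a}^*_P$ to the positive chamber $\mathfrak{a}^*_{P,+}$ induces a $C^*$-algebra isomorphism
\[
C_0\bigl(\mathfrak{a}^*_P, \compop(\Ind H_\sigma)\bigr)^{W_\sigma}
\stackrel{\cong}{\longrightarrow}
C_0\bigl(\mathfrak{a}^*_{P,+}, \compop(\Ind H_\sigma)\bigr)^{R_\sigma}.
\]
That the restriction map is well-defined and $R_\sigma$-equivariant is immediate: by the defining condition $R_\sigma = \{w\in W_\sigma : w(\overline{\Delta}_0^+)\subseteq \overline{\Delta}_0^+\}$ of Theorem \ref{thm-Rgroup-sl2s}(iv), the subgroup $R_\sigma$ preserves $\mathfrak{a}^*_{P,+}$. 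Injectivity is equally straightforward: since $\mathfrak{a}^*_{P,+}$ is a fundamental domain for the reflection group $W_{0,\sigma}$, and $W_\sigma = W_{0,\sigma}\rtimes R_\sigma$, every $W_\sigma$-invariant section is determined by its values on $\mathfrak{a}^*_{P,+}$.

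To prove surjectivity, given $g$ on $\mathfrak{a}^*_{P,+}$, I would define a tentative extension $\tilde g$ on $\mathfrak{a}^*_P$ by
\[
\tilde g(\nu) = \mathscr{A}'^{\,*}_{w,\sigma,\nu}\, g(w\nu)\, \mathscr{A}'_{w,\sigma,\nu}
\]
for any $w\in W_{0,\sigma}$ with $w\nu\in \mathfrak{a}^*_{P,+}$. The scalar ambiguity in the adjusted intertwiners \eqref{eq-adjusted-knapp-stein-ops} is harmless since they enter only by conjugation. The projective cocycle \eqref{eq-porjective-cocycle-relation} and unitarity of the $\mathscr{A}'$ show that independence of the choice of $w$ reduces to the claim that for any $\mu\in \mathfrak{a}^*_{P,+}$ and any element $u$ of the stabilizer of $\mu$ in $W_{0,\sigma}$, one has $\mathscr{A}'_{u,\sigma,\mu}\, g(\mu)\,\mathscr{A}'^{\,*}_{u,\sigma,\mu} = g(\mu)$.

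This vanishing of the stabilizer action on $g(\mu)$ is the main obstacle and the technical heart of the argument. By standard reflection group theory the stabilizer of $\mu$ in $W_{0,\sigma}=W(\overline{\Delta}_0)$ is generated by those simple reflections $s_j$ (from Theorem \ref{thm-Rgroup-sl2s}) that fix $\mu$, equivalently by those $s_j$ whose associated element $X_j$ satisfies $\mu(X_j)=0$. For such an $s_j$, I would argue that $\mathscr{A}'_{s_j,\sigma,\mu}$ is a scalar multiple of the identity, hence acts trivially by conjugation. This will be proved by invoking Theorem \ref{thm-intertwiners-constant-in-some-directions} applied to $\varphi_j$ — whose hypothesis \eqref{eq-conditions-on-x-phi} holds by Theorem \ref{thm-Rgroup-sl2s}(ii) — to produce a strongly continuous intertwining family that is constant in the $\mu(X_j)=0$ directions. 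Comparing with the Knapp-Stein family and using its cocycle property, this family coincides with $\mathscr{A}'_{s_j,\sigma,\nu}$ up to a continuous scalar, so for $\mu$ on the wall $\mathscr{A}'_{s_j,\sigma,\mu}$ is a scalar times $\mathscr{A}'_{s_j,\sigma,0}$, and the latter is a scalar by the very definition of $W_{0,\sigma}$ (Definition \ref{def-W_0sigma}).

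With well-definedness in hand, $\tilde g$ is $W_{0,\sigma}$-equivariant by construction, and $R_\sigma$-equivariance extends from $\mathfrak{a}^*_{P,+}$ to all of $\mathfrak{a}^*_P$ via the cocycle \eqref{eq-porjective-cocycle-relation}, so $\tilde g$ is $W_\sigma$-invariant. Norm-continuity of $\tilde g$ at points in the interior of a chamber is immediate from the strong continuity of the adjusted Knapp-Stein operators in $\nu$ and continuity of $g$; at a boundary point, continuity reduces exactly to the compatibility condition established above. Finally $\tilde g$ vanishes at infinity because $g$ does and $W_{0,\sigma}$ is finite. This produces the required inverse to the restriction map and completes the isomorphism at the level of each summand; assembling over the associate classes and invoking Theorem \ref{thm-structure-of-reduced-c=star-algebra-1} yields the stated isomorphism.
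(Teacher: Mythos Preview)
Your overall strategy matches the paper's: reduce to showing that restriction from $\mathfrak{a}^*_P$ to $\mathfrak{a}^*_{P,+}$ gives an isomorphism from $W_\sigma$-invariants to $R_\sigma$-invariants, with injectivity obvious and surjectivity obtained by extending via conjugation with the adjusted Knapp--Stein operators. The difference lies in how the key technical input is obtained.

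The paper does not attempt to derive the needed scalar property from Theorem~\ref{thm-intertwiners-constant-in-some-directions}. Instead it cites directly a stronger fact from the literature (Delorme \cite{Delorme84}, Knapp--Stein \cite[Lemma~14.1]{KnappSteinII}): for \emph{every} $w\in W_{0,\sigma}$ and \emph{every} $\nu\in\mathfrak{a}^*_P$, the adjusted intertwiner $\mathscr{A}'_{w,\sigma,\nu}$ is a scalar multiple of the identity. With this in hand, well-definedness, continuity across walls, and $W_{0,\sigma}$-equivariance of the extension are all immediate, and no stabilizer analysis is required.

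Your attempt to deduce a weaker version of this scalar property from the paper's own tools contains a genuine error. The reflections $s_j$ of Theorem~\ref{thm-Rgroup-sl2s} generate the abelian group $S_\sigma$, which contains $R_\sigma$; they are \emph{not} simple reflections for the root system $\overline{\Delta}_0$ whose Weyl group is $W_{0,\sigma}$. Indeed $R_\sigma\cap W_{0,\sigma}=\{e\}$ by the semidirect product decomposition, so the generators $s_j$ typically lie outside $W_{0,\sigma}$ altogether. Consequently your key step fails on both ends: the stabilizer of $\mu$ in $W_{0,\sigma}$ is generated by simple reflections of $\overline{\Delta}_0$, not by the $s_j$, so Theorem~\ref{thm-intertwiners-constant-in-some-directions} (which the paper only sets up for the $\varphi_j$ associated with $S_\sigma$) does not apply to them; and even when an $s_j$ does fix $\mu$, Definition~\ref{def-W_0sigma} gives no information about $\mathscr{A}'_{s_j,\sigma,0}$ unless $s_j\in W_{0,\sigma}$. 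To repair your argument you would need either to establish $\mathfrak{sl}(2,\R)$-embeddings satisfying \eqref{eq-conditions-on-x-phi} for the simple roots of $\overline{\Delta}_0$, or simply invoke the Delorme/Knapp--Stein result as the paper does.
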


\begin{proof}
It suffices to show that  the $C^*$-algebra morphism 
\[
 C_0  (\mathfrak{a}^*_{P},\compop(\Ind H_\sigma )  )^{W_\sigma}
 \longrightarrow C_0  (\mathfrak{a}^*_{P,+},\compop(\Ind H_\sigma )  )^{R_\sigma}
\]
given by restriction to the positive  chamber is an isomorphism.  The map is certainly injective, because if a $W_{0,\sigma}$-invariant function vanishes on the positive chamber $\mathfrak{a}^*_{P,+}$, then it must vanish on all chambers, and hence on $\mathfrak{a}^*_{P}$, since $W_{0,\sigma}$ acts transitively on the set of Weyl chambers.  So the issue is surjectivity.

Surjectivity follows from the fact that if $\nu \in \mathfrak{a}_P^*$,  if $w\in W_{0,\sigma}$, and if $w (\nu) = \nu$, then the  adjusted intertwining operator
\begin{equation*}
\mathscr{A}'_{w,\sigma, \nu}: \Ind H_{\sigma }\longrightarrow\Ind   H_{\sigma }
\end{equation*}
is a multiple of the identity operator; see for instance \cite[Theorem 1 (v)]{Delorme84}, or   the proof of Lemma 14.1 in \cite{KnappSteinII}. With this, we can extend any function $f\in C_0(\mathfrak{a}^*_{P,+},\compop(\Ind H_\sigma )  )$ to a $W_{0,\sigma}$-equivariant $C_0$-function on $\mathfrak{a}_P^*$ using  the formula
\[
f(w(\nu)) = \mathscr{A}'_{w,\sigma, \nu} f(\nu) \mathscr{A}'{}^*_{\!\!\!w,\sigma,\nu} \qquad  \forall w \in W_{0,\sigma} \,\, \forall \nu\in \mathfrak{a}^*_{P,+}.
\]

Because $R_\sigma$ normalizes $W_{0,\sigma}$, if the original function $f$ is $R_\sigma$-equi\-variant, then so is its extension.
\end{proof}

\subsection{Rescaling automorphisms and the R-group}
\label{sec-rescaling-without-w-prime-groups}
We are now ready to define our 
rescaling automorphisms 
\[
\alpha _t \colon  C_r^*(G) \longrightarrow  C_r^*(G) \qquad (t >0).
\]
We shall use the $C^*$-algebra isomorphism given in  Theorem~\ref{thm-structure-of-reduced-c=star-algebra-2}, and first define a family of rescaling automorphisms of each  
  summand in the $c_0$-direct sum that appears there.
Then we shall   transfer the automorphisms to $C^*_r(G)$ using the isomorphism in the theorem.

To begin, we shall disregard the $R$-groups that appear in Theorem~\ref{thm-structure-of-reduced-c=star-algebra-2} and construct, for any representative $(P,\sigma)$ of any associate class, a one-parameter group of automorphisms
\begin{equation}
    \label{eq-rescaling-without-r-group-invariants}
\alpha_t \colon C_0\bigl (\mathfrak{a}^*_{P},\compop(\Ind H_\sigma )\bigr ) 
\to C_0\bigl (\mathfrak{a}^*_{P},\compop(\Ind H_\sigma )\bigr ) 
\quad (t> 0)
\end{equation}
 with the property that 
\begin{equation}
    \label{eq-rescaling-representations-without-r-group-worries}
\pi_{P, \sigma, \nu}\circ \alpha_t \simeq \pi_{P, \sigma, t^{-1}\nu} \qquad \forall \nu \in \mathfrak{a}^*_P,\,\,\forall t>0
\end{equation}
(in the present context, we are using $\pi_{P, \sigma,\nu}$ to denote the representation of $C_0(\mathfrak{a}^*_{P},\compop(\Ind H_\sigma ) ) $ as compact operators on the Hilbert space $\Ind H_\sigma $ that is given by evaluation of functions at $\nu\in \mathfrak{a}^*_{P}$).  
Then we shall adjust this initial construction to account for the action of $R_\sigma$, and finally we shall show that we may restrict to $\mathfrak{a}^*_{P,+}\subseteq \mathfrak{a}^*_P$ so as to obtain a  one-parameter group of automorphisms
\begin{equation}
    \label{eq-rescaling-with-r-group-invariants}
\alpha_t \colon C_0\bigl (\mathfrak{a}^*_{P,+},\compop(\Ind H_\sigma )\bigr )^{R_\sigma} 
\to C_0\bigl (\mathfrak{a}^*_{P,+},\compop(\Ind H_\sigma )\bigr )^{R_\sigma}
\quad (t> 0)
\end{equation}
for which \eqref{eq-rescaling-representations-without-r-group-worries} still holds (for $\nu\in \mathfrak{a}^*_{P,+}$).

The initial construction is easy:    we  simply define 
\[
\alpha_t (f)(\nu) = f(t^{-1} \nu)\qquad \forall t > 0,\,\,\, \forall \nu \in \mathfrak{a}^*_{P}.
\]
But when $R_\sigma$ is nontrivial, the adjustment that must be made so as to preserve $R_\sigma$-equivariance is a quite bit more involved.
 
Recall the group $S_\sigma\subseteq N_K(\mathfrak{a}_P)/ Z_K(\mathfrak{a}_P)$ from Theorem~\ref{thm-Rgroup-sl2s}, which includes $R_\sigma$ as a subgroup.   It is generated by a finite commuting family  of simple reflections on $\mathfrak{a}_P^*$, and therefore we can make the following choice:

\begin{definition}\label{def-fundamental-chamber-F}
    Throughout this section we shall denote by   $F\subseteq \mathfrak{a}^*_{P}$   a fixed choice of  closed,  convex fundamental domain (an intersection of half-spaces) for the action of $S_\sigma $ on the Euclidean space $\mathfrak{a}^*_{P}$.
\end{definition}

Of course, the group  $S_\sigma$ acts simply-transitively on the set of all possible fundamental chambers, and the union of all possible fundamental chambers is the entirety of $\mathfrak{a}^*_{P}$.

\begin{definition}
\label{def-b-operator-and-alpha-w}
For  $w\in S_\sigma$, $\nu \in F$ and $t{>}0$, define a unitary operator 
\[
\mathscr{B}_{w,\nu,t} \colon \Ind H_\sigma \longrightarrow  \Ind H_\sigma
\]
by 
\[
\mathscr{B}_{w,\nu,t} =  \mathscr{A}^{\phantom{*}}_{w,w^{-1}(\sigma),\nu}
     \mathscr{A}^*_{w,w^{-1}(\sigma),t^{-1}\nu} .
\]
In addition, given $f\in C_0\bigl (\mathfrak{a}^*_{P},\compop(\Ind H_\sigma ) \bigr)$,  define a continuous function
    \[
    \alpha_t^w (f) \colon w (F)\longrightarrow \compop(\Ind H_\sigma ) ,
    \]
    vanishing at infinity, by means of the formula
    \[
     \alpha_t^w (f)\bigl (w(\nu)\bigr ) 
     = \mathscr{B}^{\phantom{*}}_{w,\nu,t}
     f\bigl (t^{-1} w(\nu)\bigr)
\mathscr{B}^{*}_{w,\nu,t}
    \]
    for all $\nu\in F$.
\end{definition}

We are going to show that as $w\in S_\sigma$ varies, the functions $\alpha^w_t(f)$ just defined agree on the pairwise intersections of  the closed subspaces $w(F)\subseteq \mathfrak{a}^*_{P}$, and so define, piecewise, a single continuous function
\[
    \alpha_t (f) \colon \mathfrak{a}^*_{P} \longrightarrow \compop(\Ind H_\sigma ) .
\]
This requires the following fact: 

\begin{lemma} 
\label{lem-u-mu-constant-in-some-directions}
Let $w\in S_\sigma$, and let $s\in S_\sigma$ be one of the generating simple reflections in part \textup{(ii)} of Theorem~\textup{\ref{thm-Rgroup-sl2s}}. If   $\nu \in \mathfrak{a}^*_{P}$, and if $\nu $ is fixed by $s$, then for every $t>0$  the unitary operator\footnote{Of course,  $s{=}s^{-1}$ since $S$ is a reflection, but we have included inverse signs in this and subsequent formulas nonetheless.} 
\[
\mathscr{A}^{\phantom{*}}_{s,s^{-1}w^{-1}(\sigma),\nu}
      \mathscr{A}^*_{s,s^{-1}w^{-1}(\sigma), t^{-1}\nu} \colon \Ind H_{w^{-1}(\sigma)} \longrightarrow \Ind H_{w^{-1}(\sigma)}
\]
is a scalar multiple of the identity.
\end{lemma}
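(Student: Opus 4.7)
The plan is to invoke Theorem~\ref{thm-intertwiners-constant-in-some-directions} with the inducing representation $\tau := s^{-1}w^{-1}(\sigma)$ of $L_P$ playing the role of $\pi$, and with the Lie algebra morphism $\varphi = \varphi_j$ from Theorem~\ref{thm-Rgroup-sl2s} whose associated simple reflection is $s = s_j$. The hypotheses are satisfied: $\tau$ is (equivalent to) a discrete series of $M_P$ extended trivially across $A_P$, so it has real infinitesimal character as a representation of $L_P$; and the remark following Theorem~\ref{thm-Rgroup-sl2s} ensures that $\varphi_j$ satisfies condition~\eqref{eq-conditions-on-x-phi}. The theorem then produces a strongly continuous family of unitary operators
\[
\mathscr{A}^\flat_\nu \colon \Ind H_\tau \longrightarrow \Ind H_\tau \qquad (\nu \in \mathfrak{a}_P^*)
\]
intertwining the compact models of $\pi_{P,\tau,\nu}$ and $\pi_{P,s(\tau),s(\nu)}$, and satisfying $\mathscr{A}^\flat_\nu = \mathscr{A}^\flat_{\nu+\mu}$ whenever $s(\mu) = \mu$.

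The technical core of the argument is to identify $\mathscr{A}^\flat_\nu$ with the Knapp-Stein operator $\mathscr{A}_{s,\tau,\nu}$ up to a continuous unimodular scalar. Inspection of the proof of Theorem~\ref{thm-intertwiners-constant-in-some-directions} shows that $\mathscr{A}^\flat_\nu$ is assembled from the Knapp-Stein operator $\mathscr{U}'_\nu$ on the intermediate Levi subgroup $L_Q$ by rescaling through a $\nu$-dependent phase (to enforce the constancy condition~\eqref{eq-constancy-condition-for-a-prime-intertwiners}), parabolically inducing from $Q$ to $G$, and invoking induction in stages. Combining this with the standard compatibility of Knapp-Stein intertwiners for a simple reflection with induction in stages---which identifies the Knapp-Stein operator for $s$ on $G$ with the parabolic induction from $Q$ of the corresponding operator on $L_Q$, modulo normalization---and using the tautological identification $\Ind H_{s(\tau)} = \Ind H_\tau$ coming from $H_{s(\tau)} = H_\tau$, one obtains
\[
\mathscr{A}^\flat_\nu = c(\nu)\, \mathscr{A}_{s,\tau,\nu}
\]
for some continuous function $c \colon \mathfrak{a}_P^* \to \C$ with $|c(\nu)| = 1$.

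With this identification in hand the lemma follows at once. If $s(\nu) = \nu$, then $(t^{-1} - 1)\nu$ is also $s$-fixed, so the constancy property yields $\mathscr{A}^\flat_\nu = \mathscr{A}^\flat_{t^{-1}\nu}$; using that $\mathscr{A}^\flat_\nu$ is unitary and that $\overline{c(t^{-1}\nu)^{-1}} = c(t^{-1}\nu)$,
\[
\mathscr{A}_{s,\tau,\nu}\, \mathscr{A}^{*}_{s,\tau,t^{-1}\nu}
= \frac{c(t^{-1}\nu)}{c(\nu)}\; \mathscr{A}^\flat_\nu\, (\mathscr{A}^\flat_\nu)^{*}
= \frac{c(t^{-1}\nu)}{c(\nu)}\cdot I,
\]
a scalar multiple of the identity.

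The principal obstacle is the identification step in the second paragraph: pinning down precisely that the phase-adjustment in the proof of Theorem~\ref{thm-intertwiners-constant-in-some-directions} absorbs the full discrepancy between $\mathscr{A}^\flat_\nu$ and $\mathscr{A}_{s,\tau,\nu}$ requires careful bookkeeping about the compatibility of Knapp-Stein intertwiners with parabolic induction from intermediate Levi subgroups, and about the various Hilbert-space identifications between $\Ind H_\tau$ and $\Ind H_{s(\tau)}$. Once this comparison is firmly in place, everything else reduces to the constancy property furnished directly by Theorem~\ref{thm-intertwiners-constant-in-some-directions}.
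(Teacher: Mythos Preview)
Your overall strategy is correct and matches the paper's: apply Theorem~\ref{thm-intertwiners-constant-in-some-directions} with $\pi=s^{-1}w^{-1}(\sigma)$ to produce a family $\mathscr{A}^\flat_\nu$ with the constancy property, identify it with the Knapp--Stein family $\mathscr{A}_{s,s^{-1}w^{-1}(\sigma),\nu}$ up to a continuous scalar $c(\nu)$, and then use $s(\nu-t^{-1}\nu)=\nu-t^{-1}\nu$ together with unitarity to finish. The final computation you give is exactly the paper's.

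Where you diverge is in the identification step. You try to trace through the proof of Theorem~\ref{thm-intertwiners-constant-in-some-directions} and invoke compatibility of Knapp--Stein intertwiners with induction in stages to pin down $\mathscr{A}^\flat_\nu = c(\nu)\mathscr{A}_{s,\tau,\nu}$. As you yourself flag in the last paragraph, this requires nontrivial bookkeeping about normalizations and Hilbert-space identifications, and you do not actually carry it out. The paper bypasses all of this with a one-line abstract argument: both $\mathscr{A}^\flat_\nu$ and $\mathscr{A}_{s,s^{-1}w^{-1}(\sigma),\nu}$ are unitary intertwiners between the \emph{same} pair of parabolically induced representations, and these representations are irreducible for a dense set of $\nu\in\mathfrak{a}_P^*$ (Knapp, \cite[Thm.~14.15]{KnappRepTheorySemisimpleGroups}). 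Schur's lemma then forces the two families to agree up to a scalar on a dense set, and strong continuity extends this to a continuous function $c(\nu)$ on all of $\mathfrak{a}_P^*$. This is far cleaner and completely eliminates the obstacle you identify; there is no need to open the black box of Theorem~\ref{thm-intertwiners-constant-in-some-directions} or to appeal to compatibility of intertwiners with induction in stages.
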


\begin{proof}
Theorems~\ref{thm-intertwiners-constant-in-some-directions} and \ref{thm-Rgroup-sl2s} provide a strongly continuous family of unitary operators
\[
 \mathscr{A}_{\nu} \colon \Ind H_{s^{-1}w^{-1}(\sigma)}
 \longrightarrow \Ind H_{w^{-1}(\sigma)}
 \qquad (\nu \in \mathfrak{a}_P^*)
\]     
that intertwine the compact models of the parabolically induced representations $\pi_{P,s^{-1}w^{-1}(\sigma),\nu}$ and $\pi_{P, w^{-1}(\sigma),s(\nu)}$, and that have the property that 
\begin{equation}
    \label{eq-constancy-in-many-directions-recapped}
s(\mu) = \mu \quad \Rightarrow \quad \mathscr{A}_\nu = \mathscr{A}_{\mu+\nu}.
\end{equation}
Now, the unitary operators 
\[
\mathscr{A}_{s,s^{-1}w^{-1}(\sigma),\nu} \colon \Ind H_{s^{-1}w^{-1}(\sigma)}
 \longrightarrow \Ind H_{w^{-1}(\sigma)}
 \qquad (\nu \in \mathfrak{a}_P^*)
\]  
have the same intertwining property, and since the representations $\pi_{P,s^{-1}w^{-1}(\sigma),\nu}$ are irreducible for a dense set of values $\nu\in \mathfrak{a}_P^*$ \cite[Thm\,14.15]{KnappRepTheorySemisimpleGroups}, it follows from Schur's lemma that there is a continuous scalar  function $c(\nu)$  such that 
\begin{equation}
    \label{eq-two-interwiners-equal-up-to-scalar-factor}
 \mathscr{A}_{\nu} = c(\nu)\cdot \mathscr{A}_{s,s^{-1}w^{-1}(\sigma),\nu}  \qquad \forall \nu \in \mathfrak{a}_P^*.
\end{equation}
But  if $\nu\in \mathfrak{a}_P^*$, if $\nu$ is fixed by $s$, and if $t> 0$, then the element 
\[
\mu = \nu - t^{-1} \nu
\]
is also fixed by $s$.  So it follows from \eqref{eq-two-interwiners-equal-up-to-scalar-factor} and  \eqref{eq-constancy-in-many-directions-recapped} that 
\[
\mathscr{A}^{\phantom{*}}_{s,s^{-1}w^{-1}(\sigma),\nu}
      = \text{scalar} \cdot \mathscr{A}^{\phantom{*}}_{s,s^{-1}w^{-1}(\sigma), t^{-1}\nu},
\]
as required.
\end{proof}

\begin{theorem}
\label{thm-rescaling-autos-on-full-space-of-continuous-fns}
For every $t{>} 0$ there is a unique automorphism
\[
\alpha_t \colon C_0\bigl (\mathfrak{a}^*_{P},\compop(\Ind H_\sigma ) \bigr) \longrightarrow C_0\bigl (\mathfrak{a}^*_{P},\compop(\Ind H_\sigma ) \bigr)
\]
such that  
    \[
    \nu\in F,\,\,\, w\in S_\sigma  \quad \Rightarrow \quad 
     \alpha_t(f)(w(\nu)) = \alpha_t^w (f) (w(\nu)),
     \]
where $\alpha_t^w(f)$ is as presented in Definition~\textup{\ref{def-b-operator-and-alpha-w}}.
\end{theorem}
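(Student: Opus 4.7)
The plan is to exhibit $\alpha_t(f)$ piecewise on the closed chambers $w(F)$ by the formula $\alpha_t(f)|_{w(F)} = \alpha_t^w(f)$, and then to verify three things: (a) the pieces agree on overlaps, so that $\alpha_t(f)$ is a well-defined continuous function on $\mathfrak{a}_P^*$; (b) the resulting map $\alpha_t$ is a $*$-algebra homomorphism of $C_0(\mathfrak{a}_P^*, \compop(\Ind H_\sigma))$ into itself; and (c) $\alpha_t$ is bijective, with inverse $\alpha_{t^{-1}}$. Uniqueness is automatic, since the values of $\alpha_t(f)$ are prescribed on each chamber.

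The heart of the argument is compatibility on overlaps. Since $S_\sigma$ is generated by commuting reflections in pairwise orthogonal hyperplanes through the origin (Theorem~\ref{thm-Rgroup-sl2s}), $F$ is a strict fundamental domain in the standard sense for finite reflection groups: if $\nu_1, \nu_2 \in F$ and $w \in S_\sigma$ satisfies $w(\nu_1) = \nu_2$, then $\nu_1 = \nu_2$, and $w$ lies in the subgroup generated by those simple reflections $s_j$ that fix $\nu_1$. Thus the overlap problem reduces to showing that, for $\nu \in F$ and $w = w_2^{-1} w_1$ in the stabilizer, we have $\alpha_t^{w_1}(f)(w_1(\nu)) = \alpha_t^{w_2}(f)(w_2(\nu))$. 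Since $w$ fixes both $\nu$ and $t^{-1}\nu$, the operator $f$ is evaluated at the same point on both sides, so it suffices to prove $\mathscr{B}_{w_1, \nu, t} = \lambda\, \mathscr{B}_{w_2, \nu, t}$ for a scalar $\lambda$ of modulus $1$; conjugation by scalar unitaries is then trivial.

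The key computational step, and the main obstacle, is to prove that for every $w \in S_\sigma$ fixing $\nu$ the operator $\mathscr{B}_{w, \nu, t}$ is a scalar multiple of the identity. I would establish this by induction on the length of a minimal factorization $w = s_{j_1}\cdots s_{j_m}$ with each $s_{j_i}(\nu) = \nu$, using the cocycle identity~\eqref{eq-cocycle-relation} to peel off one reflection at a time. Writing $w = w' s$, the cocycle identity applied at both $\nu$ and $t^{-1}\nu$ (both of which are fixed by $s$) yields
\[
\mathscr{B}_{w, \nu, t}
 = \mathscr{A}_{w', (w')^{-1}(\sigma), \nu}\,
   \bigl[\mathscr{A}_{s, s^{-1}(w')^{-1}(\sigma), \nu}\, \mathscr{A}^*_{s, s^{-1}(w')^{-1}(\sigma), t^{-1}\nu}\bigr]\,
   \mathscr{A}^*_{w', (w')^{-1}(\sigma), t^{-1}\nu},
\]
and the bracketed middle factor is scalar by Lemma~\ref{lem-u-mu-constant-in-some-directions}. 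The middle scalar therefore commutes with the outer factors, producing $\mathscr{B}_{w, \nu, t} = (\text{scalar})\cdot \mathscr{B}_{w', \nu, t}$, and induction concludes. Applying this factorization to $w = w_2^{-1}w_1$ gives the desired proportionality $\mathscr{B}_{w_1, \nu, t} = \lambda\, \mathscr{B}_{w_2, \nu, t}$.

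Once well-definedness is established, continuity on $\mathfrak{a}_P^*$ follows from strong continuity of $\mathscr{A}_{w, \sigma, \nu}$ in $\nu$ together with the agreement on chamber walls; the vanishing at infinity is inherited from $f$ via the homeomorphism $\mu \mapsto t^{-1}\mu$ and the fact that each $\alpha_t^w$ is a unitary conjugation. The $*$-homomorphism property is a direct fiberwise check, since $\mathscr{B}_{w, \nu, t}\mathscr{B}_{w, \nu, t}^* = \mathrm{id}$ lets products and adjoints pass through conjugation. Bijectivity follows by checking $\alpha_{t^{-1}} \circ \alpha_t = \mathrm{id}$ at each point $w(\nu)$: because $F$ is a cone, $t\nu$ and $\nu$ lie in the same chamber, and the composite conjugating operator $\mathscr{B}_{w, \nu, t^{-1}}\, \mathscr{B}_{w, t\nu, t}$ telescopes to the identity by unitarity of the Knapp--Stein intertwiners.
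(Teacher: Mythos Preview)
Your proposal is correct and follows essentially the same approach as the paper. Both arguments reduce the overlap compatibility to the single inductive step $\mathscr{B}_{w's,\nu,t} = (\text{scalar})\cdot \mathscr{B}_{w',\nu,t}$ for a simple reflection $s$ fixing $\nu$, proved by expanding with the cocycle relation~\eqref{eq-cocycle-relation} and invoking Lemma~\ref{lem-u-mu-constant-in-some-directions}. One small wording issue: the proportionality $\mathscr{B}_{w_1,\nu,t} = \lambda\,\mathscr{B}_{w_2,\nu,t}$ does not follow from the auxiliary statement that $\mathscr{B}_{w_2^{-1}w_1,\nu,t}$ is scalar (there is no cocycle identity for the $\mathscr{B}$'s), but rather from iterating your inductive step along the chain $w_1 = w_2 s_1\cdots s_k$; this is clearly what you intend, and it is exactly the paper's induction. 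Your added checks on continuity, the $*$-homomorphism property, and bijectivity via $\alpha_{t^{-1}}$ are correct and in fact more explicit than what the paper writes down.
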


\begin{proof}
We need to show that  if $w,w'\in S_\sigma $ and $\nu,\nu' \in F$, then 
\[
 w(\nu) = w'(\nu')
 \quad \Rightarrow \quad 
 \alpha_t^{w} (f)\bigl (w(\nu) )
=\alpha_t^{w'} (f)\bigl (w'(\nu') ).
\]

To begin, because the closed subset $F \subseteq \mathfrak{a}^*_{P}$ is a fundamental domain for the action of $S_\sigma $, the condition $w(\nu){=}  w'(\nu')$ above implies that $\nu = \nu'$.

Furthermore  $w^{-1} w'$ is an element of the isotropy subgroup of $\nu$. Therefore $w^{-1} w'$ is a product of simple reflections $s_1,\dots , s_k$ in hyperplanes  that include $\nu$:
\[
w' = ws_1\dots s_k\quad \text{with $s_1(\nu) = s_2(\nu) = \cdots = s_k(\nu)=\nu$.}
\]
So by induction, it suffices to prove that for all  $w\in S_\sigma$ and $\nu\in F$,
\[
s(\nu)=\nu\quad \Rightarrow \quad 
\alpha_t^{w} (f)\bigl (w(\nu) )
=\alpha_t^{ws} (f)\bigl (w(\nu) ).
\]

When $s(\nu) {=} \nu$, the cocycle relation for intertwining operators \eqref{eq-cocycle-relation} asserts that
\begin{multline*}
\mathscr{A}_{ws, s^{-1}w^{-1} (\sigma) ,\nu} 
     =  \mathscr{A}_{ w,w^{-1}(\sigma),\nu} \mathscr{A}_{s,s^{-1}w^{-1}(\sigma),\nu}
     \\
     \text{and} \quad 
     \mathscr{A}_{ws,s^{-1}w^{-1}(\sigma),t^{-1}\nu} 
     =  \mathscr{A}_{ w,w^{-1}(\sigma) ,t^{-1}\nu} \mathscr{A}_{s,s^{-1}w^{-1}(\sigma), t^{-1}\nu},
\end{multline*}
and it follows from this and Lemma~\ref{lem-u-mu-constant-in-some-directions}  that 
\[
\begin{aligned}
\mathscr{B}_{ws,\nu,t} 
    & = 
\mathscr{A}^{\phantom{*}}_{ws,s^{-1}w^{-1}(\sigma),\nu}
     \mathscr{A}^*_{ws,s^{-1}w^{-1}(\sigma),t^{-1}\nu}
     \\
     & =  \mathscr{A}^{\phantom{*}}_{ w,w^{-1}(\sigma),\nu} \mathscr{A}^{\phantom{*}}_{s,s^{-1}w^{-1}(\sigma),\nu}
      \mathscr{A}^*_{s,s^{-1}w^{-1}(\sigma), t^{-1}\nu}
      \mathscr{A}^*_{ w,w^{-1}(\sigma) ,t^{-1}\nu} 
      \\
      & = 
      \text{scalar} \cdot \mathscr{A}^{\phantom{*}}_{ w,w^{-1}(\sigma),\nu}  
      \mathscr{A}^*_{ w,w^{-1}(\sigma) ,t^{-1}\nu} 
      \\
      & = \text{scalar} \cdot \mathscr{B}_{w,\nu,t}
\end{aligned}
\]
So it follows from Definition~\ref{def-b-operator-and-alpha-w} that 
\begin{multline*}
\alpha_t^{ws} (f)\bigl (w(\nu) )
     = \mathscr{B}^{\phantom{*}}_{ws,\nu,t}
     f\bigl (t^{-1} w(\nu)\bigr )\mathscr{B}^{*}_{ws,\nu,t}
     \\
     = \mathscr{B}^{\phantom{*}}_{w,\nu,t}
     f\bigl (t^{-1} w(\nu)\bigr )\mathscr{B}^{*}_{w,\nu,t}
     =
     \alpha_t^{w} (f)\bigl (w(\nu) ),
\end{multline*}
as required.
\end{proof}

We still need to check that $\alpha_t$ is $R_\sigma$-equivariant:
 
\begin{lemma}
\label{lem-r-equivariance-of-rescaling}
    If $f\in C_0 (\mathfrak{a}^*_{P},\compop(\Ind H_\sigma ) )$ and $r\in R_\sigma$, then 
    \[
    r(\alpha_t (f)) = \alpha_t (r(f)) \qquad \forall t > 0.
    \]

\end{lemma}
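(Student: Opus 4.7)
The plan is to verify the identity $\alpha_t \circ r = r \circ \alpha_t$ pointwise on $\mathfrak{a}_P^*$. For any $\mu\in\mathfrak{a}_P^*$, write $\mu = w(\nu)$ with $w\in S_\sigma$ and $\nu \in F$; since $R_\sigma \subseteq S_\sigma$, also $rw\in S_\sigma$, so both $\alpha_t(f)(w(\nu))$ and $\alpha_t(r(f))(rw(\nu))$ can be computed via Definition~\ref{def-b-operator-and-alpha-w} with the same $\nu\in F$. Unpacking the two sides of the required equality using the definitions of $\alpha_t$ and of the $R_\sigma$-action \eqref{eq-w-sigma-action-on-c-star-algebra-1}, both emerge as conjugations of the same element $f(t^{-1}w(\nu))$ by unitary operators on $\Ind H_\sigma$, and the statement reduces to
\[
\mathscr{A}'_{r,\sigma,w(\nu)}\,\mathscr{B}_{w,\nu,t} \,=\, c\,\mathscr{B}_{rw,\nu,t}\,\mathscr{A}'_{r,\sigma,t^{-1}w(\nu)}
\]
for some scalar $c$ of modulus one, since any such scalar cancels under the conjugation.

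To establish this, substitute $\mathscr{A}'_{r,\sigma,\cdot} = E_r\mathscr{A}_{r,\sigma,\cdot}$ and the definition of $\mathscr{B}$. On the right, apply the cocycle relation \eqref{eq-cocycle-relation} to decompose $\mathscr{A}_{rw,(rw)^{-1}(\sigma),\mu}=\mathscr{A}_{r,r^{-1}(\sigma),w(\mu)}\mathscr{A}_{w,w^{-1}r^{-1}(\sigma),\mu}$ for $\mu\in\{\nu,t^{-1}\nu\}$, then use \eqref{eq-knapp-stein-a-and-e-relation} in the form $E_r\mathscr{A}_{r,\sigma,t^{-1}w(\nu)}=\mathscr{A}_{r,r^{-1}(\sigma),t^{-1}w(\nu)}E_r$, so that the trailing factor $\mathscr{A}^*_{r,r^{-1}(\sigma),t^{-1}w(\nu)}\mathscr{A}_{r,r^{-1}(\sigma),t^{-1}w(\nu)}$ collapses to the identity. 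On the left, use \eqref{eq-knapp-stein-a-and-e-relation} to commute $E_r$ past the outer factor $\mathscr{A}_{r,\sigma,w(\nu)}$. After these manipulations a common factor of $\mathscr{A}_{r,r^{-1}(\sigma),w(\nu)}$ appears on the outside of both sides and cancels, reducing the identity to
\[
E_r\,\mathscr{A}_{w,w^{-1}(\sigma),\nu}\,\mathscr{A}^*_{w,w^{-1}(\sigma),t^{-1}\nu}\;=\;c'\,\mathscr{A}_{w,w^{-1}r^{-1}(\sigma),\nu}\,\mathscr{A}^*_{w,w^{-1}r^{-1}(\sigma),t^{-1}\nu}\,E_r
\]
with $|c'|=1$.

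The remaining step is naturality of the Knapp-Stein intertwiners under the equivalence $E_r$. A direct computation using only the defining identity of $E_r$ as an $M_P$-intertwiner $r(\sigma)\to\sigma$ shows that the same linear map $E_r$ on $H_\sigma$ is simultaneously an $M_P$-intertwiner from $w^{-1}(\sigma)$ to $w^{-1}r^{-1}(\sigma)$. Because the $A_P$-character $e^{i\nu}$ is identical on both sides, $E_r$ promotes to an $L_P$-intertwiner $w^{-1}(\sigma)\otimes e^{i\nu}\to w^{-1}r^{-1}(\sigma)\otimes e^{i\nu}$ and, by functoriality of parabolic induction, to a $G$-intertwiner $\pi_{P,w^{-1}(\sigma),\nu}\to\pi_{P,w^{-1}r^{-1}(\sigma),\nu}$; the analogous statement holds for $t^{-1}\nu$. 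Both sides of the reduced identity are then $G$-intertwiners between the same pair of principal series representations, which are irreducible for generic $\nu$, so Schur's lemma forces agreement up to a scalar of modulus one; strong continuity in $\nu$ propagates this to all $\nu$. The main obstacle is precisely this naturality step: $E_r$ is only an $M_P$-intertwiner and extends to an $L_P$-intertwiner only when the $A_P$-characters on the two sides agree, which works here because the parameters $\nu$ and $t^{-1}\nu$ appear symmetrically in the two factors of each copy of $\mathscr{B}$.
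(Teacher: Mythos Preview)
Your overall strategy coincides with the paper's: evaluate both sides at $rw(\nu)$ with $\nu\in F$, use $R_\sigma\subseteq S_\sigma$ so that $rw\in S_\sigma$, expand via Definition~\ref{def-b-operator-and-alpha-w} and the $R_\sigma$-action, and reduce everything to a relation between the unitary conjugators by means of the cocycle identity \eqref{eq-cocycle-relation} and the naturality relation \eqref{eq-knapp-stein-a-and-e-relation}. Your reduction to the displayed identity
\[
E_r\,\mathscr{A}_{w,w^{-1}(\sigma),\nu}\,\mathscr{A}^*_{w,w^{-1}(\sigma),t^{-1}\nu}
\;=\;c'\,\mathscr{A}_{w,w^{-1}r^{-1}(\sigma),\nu}\,\mathscr{A}^*_{w,w^{-1}r^{-1}(\sigma),t^{-1}\nu}\,E_r
\]
is correct, and your observation that the same linear map $E_r$ is also an $M_P$-intertwiner $w^{-1}(\sigma)\to w^{-1}r^{-1}(\sigma)$ is exactly the ingredient the paper uses (implicitly) when it invokes \eqref{eq-knapp-stein-a-and-e-relation} to obtain $EW'=WE$.

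There is, however, a genuine gap in your final step. The claim that ``both sides of the reduced identity are $G$-intertwiners between the same pair of principal series representations'' is false when $t\ne 1$. In the composition $\mathscr{A}_{w,w^{-1}(\sigma),\nu}\,\mathscr{A}^*_{w,w^{-1}(\sigma),t^{-1}\nu}$ the first factor (read right to left) lands in the representation $\pi_{P,w^{-1}(\sigma),t^{-1}\nu}$, while the second factor is an intertwiner \emph{out of} $\pi_{P,w^{-1}(\sigma),\nu}$; the parameters do not match, so $\mathscr{B}_{w,\nu,t}$ is not a $G$-intertwiner, and Schur's lemma cannot be applied to the combined expression. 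The repair is to apply naturality to each Knapp--Stein factor separately: since the induced $E_r$ is a $G$-intertwiner $\pi_{P,w^{-1}(\sigma),\mu}\to\pi_{P,w^{-1}r^{-1}(\sigma),\mu}$ for every $\mu$, Schur's lemma (for generic $\mu$, then by continuity) gives
\[
E_r\,\mathscr{A}_{w,w^{-1}(\sigma),\mu} \;=\; c(\mu)\,\mathscr{A}_{w,w^{-1}r^{-1}(\sigma),\mu}\,E_r
\]
for some unimodular scalar $c(\mu)$. Using this at $\mu=\nu$ and (after taking adjoints) at $\mu=t^{-1}\nu$ yields the reduced identity with $c'=c(\nu)\overline{c(t^{-1}\nu)}$, which is all you need since scalars disappear under the outer conjugation. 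The paper avoids the scalar bookkeeping altogether by citing the Knapp--Stein naturality statement \eqref{eq-knapp-stein-a-and-e-relation} directly, which gives the factorwise relation as an exact equality.
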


\begin{proof} 
If $r\in R_\sigma$, $w\in S_\sigma $ and $\nu \in F$, then 
by using the definition of the $R_\sigma$-action in \eqref{eq-w-sigma-action-on-c-star-algebra-1} and \eqref{eq-w-sigma-action-on-c-star-algebra-2}, we get 
\[
    E^*\cdot r(\alpha_t (f))(rw(\nu)) \cdot E
    = \mathscr{A}_{r,\sigma,w(\nu)}\cdot  \alpha_t (f)(w(\nu))\cdot  \mathscr{A}^*_{r,\sigma,w(\nu)},
\]
where the unitary operator $E \colon \Ind H_{r(\sigma)} \to \Ind H_{\sigma}$ comes from an equivalence of representations $r(\sigma) \simeq \sigma$, as in \eqref{eq-equivalence-of-reps-e}.  If we insert into the above formula the definition of $\alpha_t(f)$, then we obtain 
\[
 E^*\cdot r(\alpha_t (f))(rw(\nu))\cdot  E = U \cdot f(t^{-1} w(\nu)) 
   \cdot  U^* ,
\]
where  
\[
U = \mathscr{A}^{\phantom{*}}_{r,\sigma, w(\nu)} \mathscr{A}^{\phantom{*}}_{w,w^{-1}(\sigma),\nu} \mathscr{A}^{*}_{w,w^{-1}(\sigma),t^{-1}\nu}
\]
The formula for the unitary operator $U$  may be manipulated using the cocycle relation \eqref{eq-cocycle-relation} as follows:  
\[
\begin{aligned}
U 
    & = \mathscr{A}^{\phantom{*}}_{rw,w^{-1}(\sigma), \nu} \mathscr{A}^{*}_{w,w^{-1}(\sigma),t^{-1}\nu}
    \\
    & = \mathscr{A}^{\phantom{*}}_{rw,w^{-1}(\sigma), \nu} \mathscr{A}^{*}_{rw,w^{-1}(\sigma), t^{-1}\nu} \mathscr{A}_{r,\sigma,t^{-1}w(\nu)}
    \\
    & = \mathscr{A}^{\phantom{*}}_{rw,w^{-1}r^{-1}(\sigma'), \nu} \mathscr{A}^{*}_{rw,w^{-1}r^{-1}(\sigma'), t^{-1}\nu} \mathscr{A}_{r,\sigma,t^{-1}w(\nu)}
\end{aligned}
\]
where $\sigma' = r(\sigma)$ (this representation is equivalent to $\sigma$, but not equal to it).
But now, 
\[
    \mathscr{A}^{\phantom{*}}_{r,\sigma,t^{-1}w(\nu)} f(t^{-1} w(\nu)) \mathscr{A}^*_{r,\sigma,t^{-1}w(\nu)}    
 =   E^* \cdot (r(f)(t^{-1} rw(\nu))\cdot  E,
\]
which gives 
\begin{equation}
    \label{eq-r-equivariance-nearly-there}
E^*\cdot  r(\alpha_t (f))(rw(\nu))\cdot E  = W'\cdot  E^*\cdot  r(f)(t^{-1}rw(\nu))
\cdot E \cdot W'{}^{*},
\end{equation}
where 
\[
W' =  \mathscr{A}^{\phantom{*}}_{rw,w^{-1}r^{-1}(\sigma'), \nu} \mathscr{A}^{*}_{rw,w^{-1}r^{-1}(\sigma'), t^{-1}\nu}.
\]
If we set 
\[
W  = \mathscr{B}_{rw,\nu,t} =  \mathscr{A}^{\phantom{*}}_{rw,w^{-1}r^{-1}(\sigma), \nu} \mathscr{A}^{*}_{rw,w^{-1}r^{-1}(\sigma), t^{-1}\nu} ,
\]
then it follows from \eqref{eq-knapp-stein-a-and-e-relation} that  $EW'=WE$, and so \eqref{eq-r-equivariance-nearly-there} may be rewritten as 
\[
\begin{aligned}
E^*\cdot  r(\alpha_t (f))(rw(\nu))\cdot E  
    & = E^*\cdot W\cdot    r(f)(t^{-1}rw(\nu)) \cdot W{}^{*}\cdot E
    \\
    & =  E^*\cdot \alpha_t (r(f))(rw(\nu)) \cdot E ,
 \end{aligned}
\]
which proves the required $R_\sigma$-equivariance. 
\end{proof} 

With this, we have reached our first objective: 

\begin{theorem} 
\label{thm-rescaling-autos-on-r-invariants}
The automorphisms $\alpha_t$  in Theorem~\textup{\ref{thm-rescaling-autos-on-full-space-of-continuous-fns}} restrict to $R_\sigma$-invariants to give a continuous, one-parameter group of automorphisms
\[
\pushQED{\qed}
    \alpha_t \colon C_0\bigl (\mathfrak{a}^*_{P},\compop(\Ind H_\sigma ) \bigr)^{R_\sigma} \longrightarrow C_0\bigl (\mathfrak{a}^*_{P},\compop(\Ind H_\sigma ) \bigr)^{R_\sigma}\qquad (t>0).
\qedhere
\popQED
\]
\end{theorem}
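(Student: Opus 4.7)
The plan is to verify three properties: that each $\alpha_t$ carries $R_\sigma$-invariants to $R_\sigma$-invariants, that the family $\{\alpha_t\}_{t>0}$ satisfies the one-parameter group law $\alpha_{t_1}\circ \alpha_{t_2} = \alpha_{t_1 t_2}$, and that $t\mapsto \alpha_t(f)$ is norm-continuous for each fixed $f$. The first is exactly the content of Lemma~\ref{lem-r-equivariance-of-rescaling}: since $\alpha_t$ is already an automorphism of the ambient algebra $C_0\bigl(\mathfrak{a}_P^*,\compop(\Ind H_\sigma)\bigr)$ by Theorem~\ref{thm-rescaling-autos-on-full-space-of-continuous-fns}, commuting with the $R_\sigma$-action immediately gives a $*$-automorphism of the invariant subalgebra.

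For the group law, the key structural observation is that the fundamental domain $F\subseteq \mathfrak{a}_P^*$ is a closed convex cone with apex at the origin, since it is an intersection of half-spaces bounded by the reflection hyperplanes of the generators $s_j$ of $S_\sigma$, all of which pass through $0$. Hence for every $t>0$ and every $\nu\in F$ one has $t^{-1}\nu\in F$, and for $w\in S_\sigma$ the point $t^{-1}w(\nu) = w(t^{-1}\nu)$ again lies in the closed chamber $w(F)$. A direct computation from Definition~\ref{def-b-operator-and-alpha-w} then yields the telescoping identity
\[
\mathscr{B}^{\phantom{*}}_{w,\nu,t_1}\,\mathscr{B}^{\phantom{*}}_{w,t_1^{-1}\nu,t_2}
=\mathscr{A}^{\phantom{*}}_{w,w^{-1}(\sigma),\nu}\,\mathscr{A}^{*}_{w,w^{-1}(\sigma),(t_1t_2)^{-1}\nu}
=\mathscr{B}^{\phantom{*}}_{w,\nu,t_1t_2},
\]
since the two intermediate factors cancel. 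Substituting this into the definitions on $w(F)$ shows $\alpha_{t_1}(\alpha_{t_2}(f))(w(\nu)) = \alpha_{t_1t_2}(f)(w(\nu))$, and by Theorem~\ref{thm-rescaling-autos-on-full-space-of-continuous-fns} this equality on each $w(F)$ suffices to give $\alpha_{t_1}\circ \alpha_{t_2} = \alpha_{t_1 t_2}$ globally. With $t=1$ one recovers $\alpha_1 = \mathrm{id}$, so each $\alpha_t$ is invertible with inverse $\alpha_{t^{-1}}$.

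Strong continuity in $t$ follows from the strong continuity of the Knapp-Stein intertwiners $\nu\mapsto \mathscr{A}_{w,w^{-1}(\sigma),\nu}$ recalled in Section~\ref{sec-intertwining-groups}, which makes the unitaries $\mathscr{B}_{w,\nu,t}$ jointly strongly continuous in $(\nu,t)$. Combined with the uniform continuity of any $f\in C_0$ on compact sets and its decay at infinity, this gives norm-continuity of $t\mapsto \alpha_t(f)$ on each closed piece $w(F)$; finiteness of $S_\sigma$ then promotes this to norm-continuity over all of $\mathfrak{a}_P^*$. The main obstacle, in my view, is really organizational rather than computational: all the genuinely delicate analysis—the $R_\sigma$-equivariance, and the compatibility of the piecewise definition across chamber boundaries via Lemma~\ref{lem-u-mu-constant-in-some-directions}—has already been discharged in the preceding lemmas, so what remains is just the cocycle-style telescoping above together with an appeal to Knapp-Stein continuity.
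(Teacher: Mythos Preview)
Your proposal is correct and aligns with the paper's approach: the paper treats this theorem as an immediate corollary of Theorem~\ref{thm-rescaling-autos-on-full-space-of-continuous-fns} and Lemma~\ref{lem-r-equivariance-of-rescaling} (note the \qed\ embedded in the statement itself), leaving the one-parameter group law and continuity implicit. Your telescoping identity $\mathscr{B}_{w,\nu,t_1}\mathscr{B}_{w,t_1^{-1}\nu,t_2}=\mathscr{B}_{w,\nu,t_1t_2}$ and the continuity argument via strong continuity of the Knapp--Stein operators simply make explicit what the paper takes for granted, so there is no substantive divergence.
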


To complete our construction of the rescaling automorphisms we require  just a small additional argument. 

\begin{lemma} 
There is a unique continuous, one-parameter group of automorphisms
\[
\alpha_t \colon C_0\bigl (\mathfrak{a}^*_{P,+},\compop(\Ind H_\sigma ) \bigr)^{R_\sigma} \longrightarrow C_0\bigl (\mathfrak{a}^*_{P,+},\compop(\Ind H_\sigma ) \bigr)^{R_\sigma}\qquad (t>0)
\]
such that for every $t>0$ the diagram 
\[
\xymatrix{
C_0\bigl (\mathfrak{a}^*_{P},\compop(\Ind H_\sigma ) \bigr)^{R_\sigma} \ar[r]^{\alpha_t}\ar[d]_{\mathrm{restr.}} & C_0\bigl (\mathfrak{a}^*_{P},\compop(\Ind H_\sigma ) \bigr)^{R_\sigma}\ar[d]^{\mathrm{restr.}}
\\
C_0\bigl (\mathfrak{a}^*_{P,+},\compop(\Ind H_\sigma ) \bigr)^{R_\sigma} \ar[r]_{\alpha_t} & C_0\bigl (\mathfrak{a}^*_{P,+},\compop(\Ind H_\sigma ) \bigr)^{R_\sigma}
}
\]
is commutative \textup{(}the automorphisms $\alpha_t$ at the top of the diagram are those of Theorem~\textup{\ref{thm-rescaling-autos-on-r-invariants}}, while the vertical arrows are given by restriction of functions from $\mathfrak{a}^*_{P}$ to $\mathfrak{a}^*_{P,+}$\textup{)}.
\end{lemma}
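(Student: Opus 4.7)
The plan is a descent argument along the restriction map
\[
\rho \colon C_0(\mathfrak{a}^*_P,\compop(\Ind H_\sigma))^{R_\sigma} \longrightarrow C_0(\mathfrak{a}^*_{P,+},\compop(\Ind H_\sigma))^{R_\sigma}.
\]
I would verify that (a) $\rho$ is a surjective $*$-homomorphism and that (b) every automorphism $\alpha_t$ from Theorem~\ref{thm-rescaling-autos-on-r-invariants} preserves $\ker(\rho)$. Granted these, each $\alpha_t$ descends to a unique automorphism of the quotient $C_0(\mathfrak{a}^*_P,\compop(\Ind H_\sigma))^{R_\sigma}/\ker(\rho)$, which by (a) is canonically identified with $C_0(\mathfrak{a}^*_{P,+},\compop(\Ind H_\sigma))^{R_\sigma}$. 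This descended automorphism is the downstairs $\alpha_t$; the diagram commutes by construction, uniqueness follows from surjectivity of $\rho$, and both continuity in $t$ and the one-parameter group law are inherited from the upstairs statement.

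For (a), I would factor $\rho$ through the $W_\sigma$-invariants. Since $R_\sigma \subseteq W_\sigma$ one has a chain
\[
C_0(\mathfrak{a}^*_P,\compop(\Ind H_\sigma))^{W_\sigma} \hookrightarrow C_0(\mathfrak{a}^*_P,\compop(\Ind H_\sigma))^{R_\sigma} \xrightarrow{\;\rho\;} C_0(\mathfrak{a}^*_{P,+},\compop(\Ind H_\sigma))^{R_\sigma},
\]
whose composite is precisely the restriction map that was shown to be a $*$-isomorphism in the proof of Theorem~\ref{thm-structure-of-reduced-c=star-algebra-2}. In particular the composite is surjective, hence $\rho$ itself is surjective.

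For (b), the explicit formula from Theorem~\ref{thm-rescaling-autos-on-full-space-of-continuous-fns} reads
\[
\alpha_t(f)(w(\nu)) = \mathscr{B}_{w,\nu,t}\, f\bigl(t^{-1} w(\nu)\bigr)\, \mathscr{B}^{*}_{w,\nu,t} \qquad (\nu \in F,\; w\in S_\sigma).
\]
The crucial observation is that the positive chamber $\mathfrak{a}^*_{P,+}$ is a convex cone---cut out by the inequalities $\alpha \geq 0$ for $\alpha \in \overline{\Delta}_0^+$ through the origin---and hence invariant under the rescaling $\mu \mapsto t^{-1}\mu$ for every $t>0$. So if $\mu = w(\nu) \in \mathfrak{a}^*_{P,+}$, then $t^{-1}\mu$ also lies in $\mathfrak{a}^*_{P,+}$, whence $f(t^{-1}\mu) = 0$ for every $f \in \ker(\rho)$, and therefore $\alpha_t(f)(\mu)=0$, as required.

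The main obstacle I anticipate is largely bookkeeping: keeping the roles of the three groups $W_{0,\sigma}$, $R_\sigma$, and $S_\sigma$ and their respective fundamental domains $\mathfrak{a}^*_{P,+}$ and $F$ straight throughout. Once (a) is routed through the $W_\sigma$-invariants, so that no Tietze-type extension needs to be performed by hand on the closed set $\mathfrak{a}^*_{P,+}$, the remaining descent is essentially formal.
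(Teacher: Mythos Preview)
Your proposal is correct and follows essentially the same route as the paper: both arguments show that the restriction map is surjective (you make the factoring through the $W_\sigma$-invariants explicit, whereas the paper simply cites the proof of Theorem~\ref{thm-structure-of-reduced-c=star-algebra-2}) and that each $\alpha_t$ preserves the kernel because the positive chamber $\mathfrak{a}^*_{P,+}$ is a cone and hence invariant under $\mu\mapsto t^{-1}\mu$, and then descend to the quotient.
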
 

\begin{proof} 
The formula for the automorphism $\alpha_t$ in the top row of the diagram above is 
\begin{equation}
\label{eq-formula-for-alpha-t-recap}
\alpha_t (f)(w(\nu)) =\mathscr{B}^{\phantom{*}}_{w,\nu,t}  f(t^{-1}w( \nu)) \mathscr{B}^*_{w,\nu,t} \qquad \forall w\in S_\sigma, \,\,\,\forall \nu \in F.
\end{equation}
It is evident from the formula that 
\[
f \vert_{\mathfrak{a}^*_{P,+}} = 0 \quad \Rightarrow \quad \alpha_t(f)  \vert_{\mathfrak{a}^*_{P,+}} = 0
\]
which means that the automorphism $\alpha_t$ in the top row maps the kernel of the restriction homomorphism in the diagram to itself, and therefore $\alpha_t$ passes to the quotient by this ideal. But we have seen in the proof of Theorem~\ref{thm-structure-of-reduced-c=star-algebra-2} that the restriction map is surjective, so the quotient is the $C^*$-algebra in the bottom row of the diagram.
\end{proof} 

To summarize, we have proved the following result: 

\begin{theorem}
    \label{thm-existence-of-rescaling-automorphisms}
    For every pair $(P,\sigma)$, the formula \eqref{eq-formula-for-alpha-t-recap}  defines a one-parameter group of $C^*$-algebra automorphisms 
    \[\alpha_t \colon C_0\bigl (\mathfrak{a}^*_{P,+},\compop(\Ind H_\sigma ) \bigr)^{R_\sigma}   \longrightarrow  C_0\bigl (\mathfrak{a}^*_{P,+},\compop(\Ind H_\sigma ) \bigr)^{R_\sigma}\qquad (t >0),
     \]
such that $\pi_{P,\sigma,\nu}\circ \alpha_t \simeq \pi_{P,\sigma,t^{-1}\nu}$ for all $\nu\in\mathfrak{a}_P^*$ and all $t>0$. \qed
\end{theorem}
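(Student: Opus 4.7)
The plan is to read this theorem as the capstone that assembles the preceding work in Section~\ref{sec-rescaling-without-w-prime-groups}. The existence of the one-parameter family $\{\alpha_t\}_{t>0}$ as $*$-automorphisms of the target algebra is essentially already in hand: Theorem~\ref{thm-rescaling-autos-on-full-space-of-continuous-fns} produces automorphisms of $C_0(\mathfrak{a}^*_P,\compop(\Ind H_\sigma))$ from the formula \eqref{eq-formula-for-alpha-t-recap}; Lemma~\ref{lem-r-equivariance-of-rescaling} shows these commute with the $R_\sigma$-action, so they restrict to automorphisms of the $R_\sigma$-invariants (Theorem~\ref{thm-rescaling-autos-on-r-invariants}); and the lemma immediately preceding the present statement descends them through the surjective restriction map to the required automorphisms on $C_0(\mathfrak{a}^*_{P,+},\compop(\Ind H_\sigma))^{R_\sigma}$.

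What remains to verify is only (i) the one-parameter group law $\alpha_s\circ\alpha_t = \alpha_{st}$ for $s,t>0$, and (ii) continuity of $t\mapsto\alpha_t$. For (i), I would exploit the fact that the fundamental domain $F$ of Definition~\ref{def-fundamental-chamber-F}, being the intersection of linear half-spaces for the orthogonal reflection group $S_\sigma$, is a closed convex cone; thus $\nu\in F$ and $s>0$ give $s^{-1}\nu\in F$, so the chamberwise definition of $\alpha_s$ and $\alpha_t$ can be applied successively without leaving the chamber $w(F)$. The composition at $w(\nu)$ then reduces to the operator identity
\[
\mathscr{B}_{w,\nu,s}\,\mathscr{B}_{w,s^{-1}\nu,t} \;=\; \mathscr{B}_{w,\nu,st},
\]
which is immediate from Definition~\ref{def-b-operator-and-alpha-w} once one cancels the telescoping pair $\mathscr{A}^*_{w,w^{-1}(\sigma),s^{-1}\nu}\mathscr{A}^{\phantom{*}}_{w,w^{-1}(\sigma),s^{-1}\nu}=I$ in the middle.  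For (ii), I would invoke strong continuity of the Knapp-Stein intertwiners in $\nu$, which makes $(t,\nu)\mapsto \mathscr{B}_{w,\nu,t}$ strongly continuous uniformly on compacta; combined with the obvious continuity of $f\mapsto f(t^{-1}\,\cdot\,)$ on $C_0$, this delivers continuity of $t\mapsto \alpha_t(f)$ for each fixed $f$.

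I do not expect a genuine obstacle at this final stage: the delicate points of the whole construction have already been discharged in the earlier lemmas, namely matching the piecewise definition across the shared boundaries $w(F)\cap w'(F)$ (handled in Theorem~\ref{thm-rescaling-autos-on-full-space-of-continuous-fns} \emph{via} the critical use of Lemma~\ref{lem-u-mu-constant-in-some-directions}) and the $R_\sigma$-equivariance (handled in Lemma~\ref{lem-r-equivariance-of-rescaling} through the interplay of the Knapp-Stein cocycle with the $E$-adjustment of \eqref{eq-adjusted-knapp-stein-ops}). By comparison, checking the group law and the continuity is routine once those results are in hand, so the proposed proof amounts to a brief assembly step together with the short telescoping computation displayed above.
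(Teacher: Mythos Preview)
Your proposal is correct and matches the paper's approach: the paper gives no proof at all for this theorem (note the \texttt{\textbackslash qed} inside the statement itself), treating it purely as a summary of the chain Theorem~\ref{thm-rescaling-autos-on-full-space-of-continuous-fns} $\to$ Lemma~\ref{lem-r-equivariance-of-rescaling} $\to$ Theorem~\ref{thm-rescaling-autos-on-r-invariants} $\to$ the preceding lemma. Your telescoping verification of the group law $\mathscr{B}_{w,\nu,s}\mathscr{B}_{w,s^{-1}\nu,t}=\mathscr{B}_{w,\nu,st}$ and your remark on continuity are details the paper leaves implicit, but they are correct and welcome; in particular your observation that $F$ is a cone (so $s^{-1}\nu\in F$ whenever $\nu\in F$) is exactly what makes the iterated chamberwise formula apply cleanly.
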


\section{The limit formula}
\label{sec-limit-formula}

In this section and the next, we shall apply the rescaling automorphisms that were constructed in Section~\ref{sec-rescaling-automorphisms} to the study of the continuous field of $C^*$-algebras $\{ C^*_r (G_t)\}$ associated to the smooth family $\bigG$ from Section~\ref{sec-cts-fields}.  

For simplicity we shall work throughout with the restriction of the continuous field $\{ C^*_r (G_t)\}$ to the half-line $[0,\infty)$; the technique of \cite[Sec.\,4.2]{HigsonRoman20} could be used to extend to the whole line, but the essential features of the continuous field are already present in its restriction to the half-line.

The rescaling automorphisms constructed in Section~\ref{sec-rescaling-automorphisms} depend on our choices for representatives in each associate class $[P,\sigma]$ and our choices for fundamental domains for the actions of the commutative reflection groups  $S_\sigma$ on each $\mathfrak{a}_P$.  Different choices will lead to distinct but inner-equivalent one-parameter groups of automorphisms.  But  to avoid a treatment of these equivalences, we shall work with the fixed choices that we  made in Section~\ref{sec-rescaling-automorphisms}.

\subsection{Limit formula for matrix coefficients}
In the following formula, on the right-hand side, $\xi_t$ is to be viewed as an element of $C^*_r (G_t)$; recall from Section~\ref{sec-continuous-field-of-group-c-star-algebras} that as a group, $G_t$ identifies with $G$, but the $C^*$-algebras $C^*_r (G_t)$ are cons\-tructed using varying Haar measures.  In the formula, $\pi_{P,\sigma, t^{-1}\nu}$ is regarded as a representation of $C^*_r(G_t)$.

 \begin{theorem} 
 \label{thm-limit-formula-for-matrix-coefficients}
Let  $(P,\sigma)$ be a chosen representative of an associate class for a   real reductive group $G$, as above. Let   $\xi= \{ \xi_t\}$ be a continuous section of the continuous field  $\{ C^*_r (G_t )\}_{t\in [0,\infty)}$.    If $\nu\in \mathfrak{a}^*_{P}$, then
    \[
    \lim_{t\to 0}  \bigl \langle \varphi,  \pi_{P,\sigma,t^{-1}\nu} (\xi_t)\psi\bigr \rangle  = \bigl \langle \varphi, \rho_{\sigma\vert_{K\cap P},\nu} (\xi_0)\psi\bigr \rangle ,
    \]
for all $\varphi, \psi\in \Ind H_\sigma$, where the inner products are taken in $\Ind H_\sigma$ \textup{(}see Definition~\textup{\ref{def-rho-theta-nu}} and Remark~\textup{\ref{rem-extended-rho-notation}} for the definition of $\rho_{\sigma\vert_{K\cap P}, \nu}$\textup{)}.  The convergence is uniform in $\nu\in \mathfrak{a}^*_P$.
\end{theorem}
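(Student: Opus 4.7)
The plan is to reduce to the dense class of smooth compactly supported sections on $\mathbf{G}$, exhibit the matrix coefficient as an oscillatory integral, and extract its first-order limit as $t\to 0$; uniformity in $\nu$ will follow from Schwartz-type decay estimates. By Section~\ref{sec-continuous-field-of-group-c-star-algebras}, $C_c^\infty(\mathbf{G})$ generates the continuous sections of $\{C^*_r(G_t)\}$, and both sides of the asserted identity are bounded by $\|\varphi\|\|\psi\|$ times $\|\xi_t\|_{C^*_r(G_t)}$ and $\|\xi_0\|_{C^*_r(G_0)}$ respectively. Continuity of $t\mapsto\|\xi_t\|$ then permits a triangle-inequality approximation to reduce the theorem to the case $\xi\in C_c^\infty(\mathbf{G})$.

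For such $\xi$, I would parametrize $G_t$ near $K$ by the map $(k,X,t)\mapsto k\exp_G(tX)$ with $X\in\mathfrak{s}$, which extends smoothly to $t=0$ under the identification $G_0=K\ltimes\mathfrak{s}$. The Haar measure $dg_t=|t|^{-d}dg$ on $G_t$, combined with the Jacobian $|t|^{d}+O(t^{d+1})$ of the exponential change of variables, pulls back to a smooth family of measures on $K\times\mathfrak{s}$ converging to the natural $G_0$-Haar measure $dk\,dX$. In the compact model of $\Ind H_\sigma$, the standard Iwasawa formula for $\langle\varphi,\pi_{P,\sigma,t^{-1}\nu}(g)\psi\rangle$ produces an oscillatory factor $e^{-(it^{-1}\nu+\rho_P)H(g^{-1}k')}$ together with non-oscillatory data depending on the $K$- and $M$-parts of the Iwasawa projection. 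For $g=k\exp_G(tX)$, the first-order expansion $H(g^{-1}k')=t\cdot p_{\mathfrak{a}}(\Ad(k'^{-1})X)+O(t^2)$ gives $t^{-1}\nu\cdot H(g^{-1}k')\to\nu\cdot p_{\mathfrak{a}}(\Ad(k'^{-1})X)$, while the $K$- and $M$-parts collapse to $k^{-1}k'$ and the identity in $M_P$. Pointwise the integrand then converges to the one representing $\langle\varphi,\rho_{\sigma,\nu}(\xi_0)\psi\rangle$ in the compact model of $K\ltimes\mathfrak{s}$, and dominated convergence yields the pointwise-in-$\nu$ limit.

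The main obstacle is the uniformity in $\nu\in\mathfrak{a}^*_P$. The phase error in the first-order expansion is of order $t|\nu||X|^2$, which is not uniformly small as $|\nu|\to\infty$, so direct pointwise estimates are insufficient. The strategy is to establish Schwartz-type decay in $\nu$, \emph{uniformly in $t\in[0,\infty)$}, for both matrix coefficients: integration by parts in the $X$-variable, exploiting smoothness and compact support of $\xi$ together with the fact that the gradient in $X$ of the phase takes the form $\nu\cdot B(k,k',X,t)$ for a smooth, uniformly bounded matrix-valued function $B$ that is uniformly nondegenerate on the support of $\xi_0$, yields bounds $|\langle\varphi,\pi_{P,\sigma,t^{-1}\nu}(\xi_t)\psi\rangle|\le C_N(1+|\nu|)^{-N}$ for every $N$, with an analogous estimate for $\rho_{\sigma,\nu}(\xi_0)$. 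Combining this common tail control with uniform convergence on compact subsets of $\mathfrak{a}^*_P$ (obtained via Arzel\`a-Ascoli from pointwise convergence and equicontinuity of the family parametrized by $t$) delivers uniform convergence on all of $\mathfrak{a}^*_P$. Making these oscillatory-integral estimates genuinely uniform in $t$ down to $t=0$, especially checking the nondegeneracy of the phase gradient uniformly along the deformation, is where the bulk of the technical work will lie.
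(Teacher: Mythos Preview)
Your reduction to $\xi\in C_c^\infty(\mathbf{G})$ matches the paper, but the coordinate choice thereafter diverges in a way that makes your route much heavier than necessary. You parametrize by the Cartan map $(k,X)\mapsto k\exp_G(tX)$ with $X\in\mathfrak{s}$, which forces you to Taylor-expand the Iwasawa projection $H$ and then fight the resulting $O(t^2)$ phase error with integration-by-parts and Schwartz-decay estimates to recover uniformity in $\nu$. The paper instead uses an Iwasawa-adapted trivialization of $\mathbf{G}$: writing $p=k_L\exp(W)\exp(Y)\exp(Z)$ with $W\in\mathfrak{a}$, $Y\in\mathfrak{n}_L$, $Z\in\mathfrak{n}_P$, and rescaling each factor linearly, $p^t=k_L\exp(tW)\exp(tY)\exp(tZ)$. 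In these coordinates the oscillatory factor is \emph{exact}, $(p^t)^{it^{-1}\nu+\rho}=p^{i\nu+t\rho}$, so the only $\nu$-dependence in the integrand is through the unimodular factor $p^{-i\nu}$, and the rest of the integrand converges uniformly on a fixed compact set as $t\to0$ with no reference to $\nu$ at all. Uniformity in $\nu$ is then immediate from bounded convergence, with no stationary-phase machinery.

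Your approach is not wrong in principle, and the integration-by-parts argument you sketch is plausible, but it front-loads exactly the difficulty that the right change of variables removes. In particular, the ``uniform nondegeneracy of the phase gradient down to $t=0$'' that you flag as the crux is a nontrivial verification in Cartan coordinates, whereas in the paper's coordinates there is simply no phase error to control. If you want to streamline your argument, replace $k\exp(tX)$ by the $KAN$-based rescaling above; the Haar-measure Jacobian computation is then handled by the standard integral formulas for $G=KP$ and $P=L_PN_P$, and the limit formula drops out by uniform convergence of a compactly supported integrand.
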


This is a generalization of Theorem~5.1.1 in \cite{HigsonRoman20}, which dealt  with the special case of minimal principal series representations of complex groups.   We begin with the following observation concerning  the deformation space $\bigG$, which is an immediate consequence of the definition of the smooth structure on $\bigG$, as described in Section~\ref{sec-dnc}.

\begin{lemma}
\label{lem-diffeomorphism-of-def-space-from-epsilon}
    If  $\varepsilon\colon G_0 \to G$ is a diffeomorphism that restricts to the identity map on the common subgroup $K$ of $G_0$ and $G$, then the function
    \[
    \begin{gathered}
    G_0\times \R\longrightarrow \bigG 
    \\
    (k,X,t)\longmapsto
    \begin{cases}
        ((k,X),0) & t = 0 
        \\
        (\varepsilon(k,tX),t) & t\ne 0
    \end{cases}
    \end{gathered}
    \]
    is a diffeomorphism. \qed
\end{lemma}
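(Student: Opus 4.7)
The plan is to prove the claim by verifying that the map matches the smooth structure on $\bigG$ through its universal characterization by properties~(i)--(iv) in Section~\ref{sec-dnc}. Bijectivity is transparent: for $t\ne 0$ the map restricts to the diffeomorphism $(k,X)\mapsto\varepsilon(k,tX)$ from $G_0$ onto $G_t=G$, while at $t=0$ it restricts to the identification $K\times(\mathfrak{g}/\mathfrak{k})\cong N_GK$ arising from left translations. Smoothness of the map and its inverse at $t\ne 0$ is immediate from smoothness of $\varepsilon$ and the fact that $t$ is invertible; the real content is in the analysis at $t=0$.

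For smoothness at $t=0$, I would invoke condition~(iv), which reduces the check to showing that pullbacks of functions of types~(ii) and~(iii) on $\bigG$ are smooth on $G_0\times\R$. A type~(ii) function built from $f\in C^\infty(G)$ pulls back to $(k,X,t)\mapsto f(\varepsilon(k,tX))$, smooth by the chain rule and equal to $f(k)$ at $t=0$ because $\varepsilon(k,0)=k$. A type~(iii) function built from $f\in C^\infty(G)$ with $f\vert_K=0$ pulls back to $t^{-1}f(\varepsilon(k,tX))$ for $t\ne 0$; the Hadamard-type identity
\[
\tfrac{1}{t}\,f\bigl(\varepsilon(k,tX)\bigr) = \int_0^1 \tfrac{d}{du}f\bigl(\varepsilon(k,utX)\bigr)\,du,
\]
valid because $f(\varepsilon(k,0))=f(k)=0$, exhibits this as a smooth function of $(k,X,t)$ down to $t=0$ with limiting value $df\vert_k\bigl(d\varepsilon\vert_{(k,0)}(0,X)\bigr)$. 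Under the left-translation identification of $N_GK$, this limit equals the prescribed value $X_k(f)$ from property~(iii).

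Smoothness of the inverse follows either by the same Hadamard analysis applied in reverse, or more economically by checking that the differential of the map at each point $(k,X,0)$ is a linear isomorphism and combining the inverse function theorem with the already-established bijectivity. The main point of attention, such as it is, is the identification of the $t=0$ limit in the Hadamard expression with the prescribed value $X_k(f)$; this rests on the compatibility between $\varepsilon$ and the left-translation trivialization of $N_GK\cong K\times(\mathfrak{g}/\mathfrak{k})$, which holds automatically for the natural choices of $\varepsilon$ (for instance $\varepsilon(k,X)=k\exp(X)$ near $K$).
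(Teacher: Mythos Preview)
Your approach is exactly what the paper has in mind when it declares the lemma an ``immediate consequence of the definition of the smooth structure on $\bigG$''; the paper gives no further argument, so you have supplied the details it omits.

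Your closing caveat, however, deserves to be promoted from parenthetical remark to explicit hypothesis. The lemma as stated is not true for an arbitrary diffeomorphism $\varepsilon$ that merely restricts to the identity on $K$: one needs in addition that the induced map on normal spaces, namely the composite $\mathfrak{g}/\mathfrak{k}\to T_kG\to (N_GK)_k\cong \mathfrak{g}/\mathfrak{k}$ coming from $d\varepsilon\vert_{(k,0)}$, be the identity under the left-translation trivialization. Without this the map in the lemma is not even continuous at $t=0$: take $G=\R$, $K=\{0\}$, $\varepsilon(X)=2X$; your Hadamard computation for a type~(iii) function built from $f$ with $f(0)=0$ gives limiting value $2f'(0)X$, whereas the prescribed value at the image point $(X,0)$ is $f'(0)X$. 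The specific $\varepsilon$ constructed later in the paper (formula~\eqref{eq-epsilon-diffeomorphism}) does satisfy the normal-derivative condition, so nothing downstream is affected, but your write-up would be cleaner if you stated the extra hypothesis outright rather than gesturing at ``natural choices.''
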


We shall use a diffeomorphism $\varepsilon\colon G_0\to G$ that is adapted to the parabolic subgroup $
P=L_PN_P  $ in the statement of Theorem~\ref{thm-limit-formula-for-matrix-coefficients}, as follows.
Fix  the Iwasawa decomposition
\begin{equation}
    \label{eq-iwasawa-decomp-of-l-p}
L_P= K_LAN_L
\end{equation}
 for which     
\[
K_L = K{\cap}L_P \quad \text{and} \quad N_L\subseteq N,
\]
and then define 
\begin{multline}
\label{eq-epsilon-diffeomorphism}
\varepsilon (k,[W{+}Y{+}Z]) = k\exp(W)\exp(Y)\exp(Z)
\\
\forall W\in \mathfrak{a}\,\,\,\forall Y\in \mathfrak{n}_L \,\,\, \forall Z \in \mathfrak{n}_P,
\end{multline}
where the square brackets indicate the class of $W{+}Y{+}Z$ in $\mathfrak{g}/\mathfrak{k}$.

For $t\ne  0$, define a \emph{rescaling diffeomorphism} $g\longmapsto g^t$ from $G$ to itself by
\begin{equation}
    \label{eq-def-of-self-diffeo}
\bigl (k \exp(W)\exp(Y)\exp (Z)\bigr )^t =  k \exp(tW)\exp(tY)\exp (tZ),
\end{equation}
with the same $W$, $Y$ and $Z$ as in \eqref{eq-epsilon-diffeomorphism}.  Note that this  restricts to a self-diffeomorphism of $P$, and that 
\[
\varepsilon (k,X)^t = \varepsilon (k,tX).
\]
So if for $g\in G$ we set  $g^0=\varepsilon^{-1}(g)\in G_0$, then Lemma~\ref{lem-diffeomorphism-of-def-space-from-epsilon} may be rewritten as follows:

\begin{lemma} 
\label{lem-big-g-up-to-diffeomorphism}
The rescaling map 
\[
\begin{gathered}
r \colon G\times \R \longrightarrow  \bigG   \\
r \colon (g,t)\longmapsto (g^t,t)
\end{gathered}
\]
is a diffeomorphism. \qed
\end{lemma}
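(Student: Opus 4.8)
The plan is to recognize the map $(g,t)\mapsto(g^{t},t)$ as the diffeomorphism of Lemma~\ref{lem-diffeomorphism-of-def-space-from-epsilon}, precomposed with a diffeomorphism of $G\times\R$ onto $G_0\times\R$ built from the map $\varepsilon$ of \eqref{eq-epsilon-diffeomorphism}. So the first task is to check that $\varepsilon\colon G_0\to G$ really is a diffeomorphism that restricts to the identity on $K$. The restriction claim is clear (take $W=Y=Z=0$ in \eqref{eq-epsilon-diffeomorphism}). For the diffeomorphism claim I would use the Iwasawa product diffeomorphism $K\times A\times N\to G$ together with the fact that the multiplication map $N_L\times N_P\to N$ is a diffeomorphism: this holds because $N=N_L\ltimes N_P$ as a smooth semidirect product, where $N_L$ is the nilpotent factor of \eqref{eq-iwasawa-decomp-of-l-p} and $N_P$ is normal in $N$, a standard consequence of the compatibility of a fixed Iwasawa decomposition with parabolic subgroups, \cite[Sec.~V.5]{KnappRepTheorySemisimpleGroups}, \cite[Sec.~VII.7]{Knapp02}. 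Composing further with $\exp\colon\mathfrak{a}\to A$, $\exp\colon\mathfrak{n}_L\to N_L$ and $\exp\colon\mathfrak{n}_P\to N_P$ (each a diffeomorphism), one obtains that $(k,W,Y,Z)\mapsto k\exp(W)\exp(Y)\exp(Z)$ is a diffeomorphism of $K\times\mathfrak{a}\times\mathfrak{n}_L\times\mathfrak{n}_P$ onto $G$; since the Iwasawa decomposition $\mathfrak{g}=\mathfrak{k}\oplus\mathfrak{a}\oplus\mathfrak{n}_L\oplus\mathfrak{n}_P$ identifies $\mathfrak{a}\oplus\mathfrak{n}_L\oplus\mathfrak{n}_P$ with $\mathfrak{g}/\mathfrak{k}$, this is exactly the statement that $\varepsilon$ is a diffeomorphism of $G_0=K\ltimes(\mathfrak{g}/\mathfrak{k})$ onto $G$. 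As a byproduct, the uniqueness and smooth $g$-dependence of the decomposition $g=k\exp(W)\exp(Y)\exp(Z)$ show that the self-map $g\mapsto g^{t}$ of $G$ in \eqref{eq-def-of-self-diffeo} is well defined and smooth in $(g,t)$ for $t\ne 0$.

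Next I would record the elementary identity $\varepsilon(k,X)^{t}=\varepsilon(k,tX)$ for $t\ne 0$, which is immediate from \eqref{eq-epsilon-diffeomorphism}, \eqref{eq-def-of-self-diffeo} and linearity of $X\mapsto tX$ on $\mathfrak{a}\oplus\mathfrak{n}_L\oplus\mathfrak{n}_P$. Writing $(k,X)=\varepsilon^{-1}(g)$, this reads $g^{t}=\varepsilon(k,tX)$ for $t\ne 0$, while for $t=0$ one has $g^{0}=\varepsilon^{-1}(g)=(k,X)\in G_0$ by definition. Comparing with the two-case formula in Lemma~\ref{lem-diffeomorphism-of-def-space-from-epsilon}, one sees that $(g,t)\mapsto(g^{t},t)$ factors as
\[
G\times\R\ \xrightarrow{\ \varepsilon^{-1}\times\mathrm{id}_{\R}\ }\ G_0\times\R\ \longrightarrow\ \bigG,
\]
the second arrow being the diffeomorphism supplied by Lemma~\ref{lem-diffeomorphism-of-def-space-from-epsilon} for this particular $\varepsilon$. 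Both arrows are diffeomorphisms, hence so is their composite, which is the assertion.

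The only point in the argument that is not pure bookkeeping is the claim that the generalized Iwasawa product map $(k,W,Y,Z)\mapsto k\exp(W)\exp(Y)\exp(Z)$ is a diffeomorphism onto $G$, equivalently that $N=N_L\ltimes N_P$ as a smooth semidirect product; this is a standard structural fact about the interaction of parabolic subgroups with a fixed Iwasawa decomposition, and I would simply cite it rather than reprove it. Everything else is a matter of unwinding the definitions of $\varepsilon$ and of $g\mapsto g^{t}$ and invoking Lemma~\ref{lem-diffeomorphism-of-def-space-from-epsilon}.
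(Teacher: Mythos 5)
Your argument is correct and is essentially the paper's own: the paper likewise regards this lemma as a rewriting of Lemma~\ref{lem-diffeomorphism-of-def-space-from-epsilon} for the specific $\varepsilon$ of \eqref{eq-epsilon-diffeomorphism}, using the identity $\varepsilon(k,X)^{t}=\varepsilon(k,tX)$ and the convention $g^{0}=\varepsilon^{-1}(g)$, with the fact that $\varepsilon$ is a diffeomorphism restricting to the identity on $K$ taken as a standard consequence of the Iwasawa and Langlands decompositions. You have simply spelled out the verification that $\varepsilon$ is a diffeomorphism in slightly more detail than the paper bothers to.
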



We shall also need some standard formulas for invariant integration on reductive groups and parabolic subgroups.  

\begin{lemma}[See for example {\cite[Eqn\,(5)]{CowlingHaagerupHowe}}]
\label{lem-haar-measure-formulas}
Let  $P$ be a parabolic subgroup of a real reductive group $G$. If  $\delta_P \colon P\to \R^\times_+$ is the modular function for $P$ defined by \eqref{eq-def-of-delta-p} then the  formula 
\[
 \int_G \xi(g)\, dg = \int_K\int_P \xi(kp) \delta_P (p) \, dp\, dk,
\]
involving a left-invariant Haar integral on $P$, defines a Haar integral on the unimodular group $G$. Moreover the formula
\[
\int_P \xi(p)\, dp = \int_{L_P}\int_{N_P} \xi(\ell n)   \, dn\, d\ell
\]
defines a left-invariant Haar integral on $P$.
\end{lemma}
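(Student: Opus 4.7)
The plan is to verify each identity by exhibiting the iterated integral on the right as a left-invariant positive Radon integral on the ambient group, and then invoking uniqueness of Haar measure; any positive scalar that appears can be absorbed into the overall normalizations.

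For the second formula, I will use that the multiplication map $L_P \times N_P \to P$, $(\ell,n)\mapsto \ell n$, is a diffeomorphism. To check left-invariance of $\int_{L_P}\int_{N_P} \xi(\ell n)\,d\ell\, dn$ under $p_0 = \ell_0 n_0 \in P$, I will rewrite
\[
p_0\,\ell\, n = (\ell_0\ell)\cdot(\ell^{-1} n_0 \ell)\cdot n,
\]
noting that $\ell^{-1} n_0 \ell$ lies in $N_P$ because $N_P$ is normal in $P$. Since $N_P$ is a simply connected nilpotent Lie group, it is unimodular, so its Haar measure is invariant under the left-translation $n\mapsto(\ell^{-1}n_0\ell)n$. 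Combined with left-invariance of $d\ell$ on $L_P$, this yields the desired invariance of the iterated integral, which by uniqueness then coincides with any left-Haar integral on $P$ up to scalar.

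For the first formula, I will set $\Lambda(\xi) = \int_K\int_P \xi(kp)\,\delta_P(p)\,dk\,dp$ and identify $\Lambda$ as a scalar multiple of the pushforward along the multiplication map $\mu\colon K\times P \to G$, which is surjective with compact fibers equal to copies of $K\cap L_P$. Concretely, I will show that the measure $\delta_P(p)\,dk\,dp$ on $K\times P$ is invariant under the action $(k_0,p_0)\cdot(k,p) = (k_0 k,\, p p_0^{-1})$: the left $K$-factor preserves $dk$, while the defining relation \eqref{eq-def-of-delta-p} for $\delta_P$ produces a factor that exactly cancels the right $P$-twisting of the left Haar measure $dp$. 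Under $\mu$, this action intertwines left $K$-translation and right $P$-translation on $G$, both of which preserve the bi-invariant Haar measure of the unimodular reductive group $G$. Uniqueness of the $G$-invariant Radon measure on $G$ then forces $\mu_*(\delta_P(p)\,dk\,dp)$ to be a scalar multiple of $dg$, which is the claim.

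The only nontrivial step, and the one I expect to be the main obstacle, is the modular-function bookkeeping that shows $\delta_P(p)\,dk\,dp$ transforms correctly under the right $P$-action on $K\times P$. This is a short algebraic computation combining the defining relation \eqref{eq-def-of-delta-p} for $\delta_P$ with the unimodularity of $G$ and the fact that $K$ is compact (hence unimodular); everything else reduces to routine Fubini and uniqueness of Haar measure, so no further substantial input is needed.
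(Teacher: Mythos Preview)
The paper does not supply its own proof of this lemma; it simply records the statement with a citation to Cowling--Haagerup--Howe. So there is no paper proof to compare against, and your argument must stand on its own.

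Your treatment of the second formula is correct: the rewriting $p_0\ell n = (\ell_0\ell)(\ell^{-1}n_0\ell)n$ together with left-invariance of $d\ell$ and $dn$ gives left-invariance of the iterated integral, and uniqueness of left Haar measure finishes it. (You invoke unimodularity of $N_P$, but you only need left-invariance of $dn$, which is automatic.)

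For the first formula your strategy is sound, but there is a genuine logical gap at the final step. You correctly verify that $\delta_P(p)\,dk\,dp$ is invariant under $(k,p)\mapsto(k_0k,\,pp_0^{-1})$, and hence that the pushforward $\mu_*(\delta_P(p)\,dk\,dp)$ is left-$K$-invariant and right-$P$-invariant on $G$. But you then invoke ``uniqueness of the $G$-invariant Radon measure on $G$'' to conclude. That is not what you have: you have not shown the pushforward is $G$-invariant on either side, only that it is $(K,P)$-bi-invariant in the mixed sense above. Uniqueness of Haar measure, as usually stated, does not apply directly.

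What closes the gap is the additional (easy) observation that left-$K$-invariant, right-$P$-invariant Radon measures on $G$ are unique up to scalar. Since $G=KP$, the right $P$-action on $K\backslash G$ is transitive with stabilizer $K\cap P$, which is compact; hence $K\backslash G\cong (K\cap P)\backslash P$ carries a unique right-$P$-invariant measure up to scalar, and lifting by integration over $K$ gives the uniqueness you need. Once you add this sentence, your argument is complete. Contrary to your closing paragraph, the ``modular-function bookkeeping'' is routine; it is this uniqueness step that actually requires care.
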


Lemma~\ref{lem-haar-measure-formulas} (applied twice, to $L_P$ and to $G$) yields the following change of variables  formula for the rescaling diffeomorphisms: 

\begin{lemma}
\label{lem-haar-measure-under-rescaling}
If $\xi$ is any continuous and compactly supported function on $G$, then 
    \begin{equation*}
   t^{-\operatorname{dim}(G/K)}  \int_G  \xi(g) \, dg 
   =  \int _K  \int_{P}    \xi(kp^t )
    \delta_P(p^t)\delta_Q(p^t)  \, dp \, dk .
    \end{equation*}
In the formula, the modular function $\delta_Q$ for the minimal parabolic subgroup $Q$ of $L$, is extended from $Q$ to a smooth function on $P$ by left $(K{\cap} L_P)$-invariance and right $N_P$-invariance.
\qed
\end{lemma}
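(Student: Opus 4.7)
The plan is a direct bookkeeping computation that applies Lemma~\ref{lem-haar-measure-formulas} twice and then performs a linear change of variables in exponential coordinates on $P$.

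First, I would apply the lemma to $P=L_P N_P$, and then to the reductive group $L_P$ with its minimal parabolic $Q=MAN_L$, in order to obtain Iwasawa-type coordinates $(k_L,a,n_L,n_P)\in K_L\times A\times N_L\times N_P$ on $P$ in which $p=k_L a n_L n_P$ and the left Haar measure takes the form $dp=\delta_Q(a)\,dk_L\,da\,dn_L\,dn_P$, where $da$ is Haar measure on $A$ and $dn_L,dn_P$ are Haar measures on $N_L,N_P$. In parallel, applying the same lemma to $G$ with its minimal parabolic $P_{\min}=MAN$, and using the factorization $N=N_L N_P$, I would obtain the standard Iwasawa formula $dg=\delta_{P_{\min}}(a)\,dk\,da\,dn_L\,dn_P$ for the Haar measure on $G$.

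Second, in these coordinates the rescaling $p\mapsto p^t$ is precisely the map $(k_L,a,n_L,n_P)\mapsto(k_L,a^t,n_L^t,n_P^t)$, which on the vector-space factors $\mathfrak{a}$, $\mathfrak{n}_L$, $\mathfrak{n}_P$ (via the exponential map) is linear scaling by $t$. Its Jacobian is therefore $t^{\dim A+\dim N_L+\dim N_P}=t^{\dim(G/K)}$, accounting for the explicit power of $t$ in the statement. Using $\delta_P(p^t)=\delta_P(a^t)$ and $\delta_Q(p^t)=\delta_Q(a^t)$---which follow from the stated extension rule for $\delta_Q$ together with the triviality of $\delta_P$ on the compact and nilpotent factors of $P$---the right-hand side of the claim, after substituting $(a,n_L,n_P)\mapsto(a^t,n_L^t,n_P^t)$, becomes, up to the factor $t^{-\dim(G/K)}$, an integral against a density involving $\delta_P\cdot\delta_Q$ on $A$.

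The final algebraic step, and the principal technical point, is the identity $\delta_{P_{\min}}=\delta_P\cdot\delta_Q$ on $A$. This is the multiplicative form of $2\rho_{P_{\min}}=2\rho_P+2\rho_L$ obtained by splitting the positive roots of $\mathfrak{a}$ in $\mathfrak{n}=\mathfrak{n}_L\oplus\mathfrak{n}_P$ into those in $\mathfrak{n}_P$ and those in $\mathfrak{n}_L$. With this identity in hand, the modular factors on the two sides match exactly the Iwasawa density of $G$, and absorbing $k_L$ into $k$ (via right-invariance of $dk$ and the normalization $\operatorname{vol}(K_L)=1$) completes the identification. The main obstacle I anticipate is not conceptual but organizational: tracking several modular functions through two nested Iwasawa decompositions and confirming that the root-system splitting matches cleanly with the rescaling of the $A$-component.
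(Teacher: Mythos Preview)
Your plan is exactly the two-step strategy the paper indicates (``apply Lemma~\ref{lem-haar-measure-formulas} twice, to $L_P$ and to $G$, then change variables''), and your identification of the key identity $\delta_{P_{\min}}=\delta_P\cdot\delta_Q$ on $A$ and of the Jacobian $t^{\dim(G/K)}$ is correct. But there is a genuine bookkeeping gap in the execution. You correctly compute that \emph{left Haar} measure on $P$ in the coordinates $p=k_L a n_L n_P$ is $dp_{\mathrm{left}}=\delta_Q(a)\,dk_L\,da\,dn_L\,dn_P$. If you now feed this into the right-hand side and perform the substitution $(a,n_L,n_P)\mapsto(a^t,n_L^t,n_P^t)$, the factor $\delta_Q(a)$ coming from the measure does not scale like the explicit $\delta_Q(p^t)=\delta_Q(a^t)$ in the integrand: after the change of variables an uncancelled $\delta_Q(a^{1/t})$ survives, and the density on $A$ is $\delta_P(a)\,\delta_Q(a)\,\delta_Q(a^{1/t})$, not $\delta_P(a)\,\delta_Q(a)$ as you assert. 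Indeed, already at $t=1$ your reading would give $\int_K\int_P\xi(kp)\,\delta_P(p)\,\delta_Q(p)\,dk\,dp_{\mathrm{left}}=\int_G\xi(g)\,dg$, which contradicts the first formula of Lemma~\ref{lem-haar-measure-formulas} unless $\delta_Q\equiv1$.

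The resolution is that the symbol $dp$ in this lemma is \emph{not} the left Haar measure of the previous lemma but the product measure $dk_L\,da\,dn_L\,dn_P$ in the exponential coordinates that define the rescaling diffeomorphism $p\mapsto p^t$; this is what emerges when one unwinds the paper's ``applied twice'' hint by writing $dg=\delta_P(a)\,\delta_Q(a)\,dk\,dk_L\,da\,dn_L\,dn_P$ and then scaling $(a,n_L,n_P)$ by $t$. With that interpretation your argument goes through verbatim: the Jacobian gives $t^{-\dim(G/K)}$, the density $\delta_P\cdot\delta_Q$ matches $\delta_{P_{\min}}$, and absorbing $k_L$ into $k$ finishes. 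So drop your derived formula for $dp_{\mathrm{left}}$ and work instead with the product measure; the rest of your outline is correct.
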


\begin{proof}[Proof of Theorem~\ref{thm-limit-formula-for-matrix-coefficients}] 
The smooth and compactly supported functions  $\xi\colon \bigG\longrightarrow \C$ generate the continuous sections of $\{ C^*_r(G_t)\}_{t\in [0,\infty)}$ in the sense recalled in Section~\ref{sec-continuous-field-of-group-c-star-algebras}.    In addition, the representations $\pi_{P,\sigma,t^{-1}\nu}$ that appear in the statement of the theorem, are  $C^*$-algebra homomorphisms, and are therefore automatically norm-bounded by $1$. 
It follows  from  these facts and an $\varepsilon/3$-argument  that the general case of the theorem, involving an arbitrary continuous section $\xi=\{ \xi_t\}$, may be reduced to the special case in which $\xi$ is a smooth and compactly supported function on the deformation space $\bigG$. We shall consider only this  special case from now on.

In what follows, it will be convenient to view $\sigma$ as a representation of $L_P{=}M_PA_P$ that is trivial on $A_P$, and indeed as a representation of $P=L_PN_P$ that is trivial on $N_P$.  Similarly, it will be convenient to view $e^{i\nu}$ and similar 
as a character of $P=M_PA_PN_P$ that is trivial on $M_P$ and $N_P$.

For $t{>}0$ let us abbreviate  $\pi_{P,\sigma,t^{-1}\nu}$ to $\pi_{t^{-1}\nu}$, which we shall be viewing as a representation of $G_t$.  We are required to show  that if $\varphi, \psi$ are elements of the Hilbert space $\Ind H_\sigma = L^2(K, H_\sigma )^{K_L}$, then 
    \[
    \lim_{t\to 0}  \bigl \langle \varphi,  \pi_{t^{-1}\nu} (\xi_t)\psi\bigr \rangle_{\Ind H_\sigma}  = \bigl \langle \varphi, \rho_{\sigma\vert_{K \cap P} ,\nu} (\xi_0)\psi\bigr \rangle_{\Ind H_\sigma} ,
    \]
uniformly in $\nu$.  
In order to do so, we write 
\[
\begin{aligned}
\bigl \langle \varphi,  \pi_{t^{-1}\nu} (\xi_t)\psi\bigr \rangle
    & =
 t^{-\operatorname{dim}(G/K)}  \int_G \bigl \langle \varphi,  \pi_{t^{-1}\nu} (g)\psi\bigr  \rangle_{\Ind H_\sigma}\, \xi_t(g) \, dg     \\
    & =  t^{-\operatorname{dim}(G/K)} \int _K\int _G  \langle \varphi(k'),  \psi  (g^{-1}k') \rangle_{H_\sigma }
    \, \xi_t(g)\, dg\,dk' .
\end{aligned}
\]
Here we have used Fubini's theorem to reverse the order of integration. We have also extended $\psi$, initially an $H_\sigma $-valued function on $K$, to an $H_\sigma $-valued function on $G$ satisfying 
\begin{equation}
    \label{eq-covariance-relation}
\psi(kp) = p^{-(it^{-1}\nu +\rho)}\sigma (p)^{-1}\psi(k)
\qquad \forall k\in K\,\, \forall p\in P.
\end{equation}
(this is how the representation $\pi_{t^{-1}\nu}$ is defined in the compact picture;  $\rho$ is the half-sum of the positive restricted $\mathfrak{a}_P$-roots, so that $e^\rho$ is the square root of the modular function $\delta_P$ from \eqref{eq-def-of-delta-p}). Then we use the change of variables $g\to k'g^{-1}$ to write 
\begin{multline}
    \label{eq-first-inner-product-for-limit-formula}
\bigl \langle \varphi,  \pi_{t^{-1}\nu} (\xi_t)\psi\bigr \rangle
\\
=  t^{-\operatorname{dim}(G/K)} \int _K \int _G \langle \varphi(k'),   \psi(g) \rangle_{H_\sigma }
    \, \xi_t(k'g^{-1}) \, dg\,dk' .
\end{multline}
Now, writing $g=kp$,  formula \eqref{eq-covariance-relation} states that that 
\[
\langle \varphi(k'),\psi(g)\rangle_{H_\sigma } = \langle \varphi(k'),\psi(kp)\rangle_{H_\sigma } =   p^{-(it^{-1}\nu +\rho)}\langle \varphi(k'),\sigma (p)^{-1}\psi(k)\rangle_{H_\sigma }.
\]
With this, the integration formula in Lemma~\ref{lem-haar-measure-formulas}, and the change of variables formula in Lemma~\ref{lem-haar-measure-under-rescaling}, the right-hand side in \eqref{eq-first-inner-product-for-limit-formula} may be written  as
\begin{multline}
\label{eq-main-integral-for-limit-formula}
            \int_K \int _{P}     \int _K  p^{-(i\nu + t \rho)} \bigl \langle \varphi(k'),   \sigma (p^t)^{-1}\psi(k) \bigr \rangle_{H_\sigma } 
    \\
     \times  \xi_t\bigl (k'(kp^t)^{-1}\bigr ) \delta_Q(p^t)\delta_P (p^t)    \, dk' \,   dp \,dk .
\end{multline}
Here we have used in addition the formula $(p^t)^{it^{-1}\nu + \rho} =p^{i\nu + t\rho}$.

We are assuming that $\xi$ is a smooth and compactly supported function on the deformation space $\bigG$. The function 
\[
(k',p,k,t) \longmapsto \xi_t (k'(kp^t)^{-1} ) 
\]
in \eqref{eq-main-integral-for-limit-formula} can be written as a composition 
\[
\xymatrix{
K\times P \times K\times \R  \ar[r]^-{(*)}& 
K\times G \times \R \ar[r]^-{\mathrm{id}\times r}& 
K \times \bigG \ar[r]^-{(**)} &  \bigG  \ar[r]^-{\xi}& \C,
}
\]
where $r$ is the diffeomorphism  from Lemma~\ref{lem-big-g-up-to-diffeomorphism},  and ($*$) and ($**$) are the smooth and proper maps 
\[
(k',p,k,t)\longmapsto (k',kp,t) \quad \text{and} \quad 
(k,g,t) \longmapsto(kg^{-1},t),
\]  
respectively.  The composition is a smooth and compactly supported function, and therefore the integrand in \eqref{eq-main-integral-for-limit-formula} is smooth in all variables and uniformly compactly supported   on the Cartesian product $K{\times} P {\times} K$, as $t$ varies. The integrand therefore  converges as $t{\to} 0$, uniformly in $\nu\in \mathfrak{a}_P^*$, to the function 
\begin{equation*}
\begin{aligned}          
(k',p,k) 
& \longmapsto  
    a^{-i\nu } \bigl \langle \varphi(k'),    \sigma(k_L)^{-1}\psi( k) \bigr \rangle_{H_\sigma }
\xi_0 (k'(kp^0)^{-1})
            \\
& =   a^{-i\nu } \bigl \langle \varphi(k'),     \psi( k k_L) \bigr \rangle_{H_\sigma }
           \\
& \qquad  \qquad \times \xi_0\bigl (k'\cdot [{-}\log(a){-}\log(n_L){-}\log(n_P)]\cdot (kk_L)^{-1}\bigr ),
\end{aligned}
\end{equation*}
where $p=\ell_Pn_P$, where  $\ell_P = k_L a n_L$ in the Iwasawa decomposition of $L_P$, and where the dots $\cdot$  indicate multiplication in $G_0$. Compare \eqref{eq-epsilon-diffeomorphism}
The quantity $\langle \varphi,  \pi_{t^{-1}\nu} (\xi_t)\psi \rangle$ therefore converges  as $t{\to}0$,  uniformly in $\nu\in \mathfrak{a}_P^*$,  to
\begin{equation}
\label{eq-matrix-coeff-for-g-0}
\int _K\int _{\mathfrak{g}/\mathfrak{k}} \int_K
e^{-i \nu (X)} \bigl \langle \varphi(k'),     \psi( k) \bigr \rangle_{H_\sigma }
    \,  \xi_0 (k'\cdot (-X) \cdot k^{-1})
     dk'\,dX\,dk .
\end{equation}
But by a calculation similar to the one that led to \eqref{eq-main-integral-for-limit-formula},  the inner product $\langle \varphi, \rho_{\sigma\vert_{K \cap P} ,\nu}(\xi_0)\psi \rangle$ may be written as  \eqref{eq-matrix-coeff-for-g-0}, too.
\end{proof}

\subsection{Limit formula for  representations}

\begin{theorem} 
\label{thm-limit-formula-for-representations}
Let  $\xi= \{ \xi_t\}$ be a continuous section of the continuous field $\{ C^*_r (G_t )\}_{t\in [0,\infty)}$. Let $P=L_PN_P=M_PA_PN_P$ be a parabolic subgroup, and let $\sigma $ be a square-integrable irreducible unitary representation of $M_P$ and let $\nu\in \mathfrak{a}_P^*$.  The operators 
\[
\pi_{P,\sigma ,t^{-1}\nu} (\xi_t) \in \mathfrak{K} \bigl ( L^2(K,H_\sigma  )^{K{\cap}L_P}\bigr )\qquad (t>0)
\]
converge in norm to a limit, as $t$ tends to $0$, uniformly in $\nu\in \mathfrak{a}_P^*$.  The limit depends only on $\xi_0\in C^*(G_0)$ and in fact 
\[
\lim_{t\to 0} \pi_{P,\sigma ,t^{-1}\nu} (\xi_t) = \rho_{\sigma\vert_{K\cap P} ,\nu} (\xi_0).
\]
\end{theorem}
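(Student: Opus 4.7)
The plan is to upgrade the weak convergence of matrix coefficients given by Theorem~\ref{thm-limit-formula-for-matrix-coefficients} to uniform-in-$\nu$ operator-norm convergence by realizing each $\pi_{P,\sigma,t^{-1}\nu}(\xi_t)$ as an integral operator on the compact model $\Ind H_\sigma = L^2(K,H_\sigma)^{K\cap L_P}$ with an operator-valued kernel whose variation in $t$ can be controlled uniformly in all parameters.

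First I would reduce to the case $\xi=\{\xi_t\}\in C_c^\infty(\mathbf{G})$: since such sections generate the continuous field $\{C^*_r(G_t)\}$ and since $\|\pi_{P,\sigma,t^{-1}\nu}(\xi_t)\|\le\|\xi_t\|_{C^*_r(G_t)}$ is continuous in $t$, a standard $3\varepsilon$-approximation transfers uniform norm convergence from smooth sections to arbitrary continuous ones.

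Next, for $\xi\in C_c^\infty(\mathbf{G})$, I would retrace the change of variables used in the proof of Theorem~\ref{thm-limit-formula-for-matrix-coefficients}. Combining Lemmas~\ref{lem-haar-measure-formulas} and~\ref{lem-haar-measure-under-rescaling} with the covariance relation \eqref{eq-covariance-relation} for $\psi\in\Ind H_\sigma$ yields the integral representation
\[
\pi_{P,\sigma,t^{-1}\nu}(\xi_t)\psi(k') = \int_K \mathcal{K}_t(k',k;\nu)\,\psi(k)\,dk,
\]
with operator-valued kernel $\mathcal{K}_t(k',k;\nu)\in\mathcal{B}(H_\sigma)$ defined by
\[
\mathcal{K}_t(k',k;\nu) = \int_P \xi_t\bigl(k'(kp^t)^{-1}\bigr)\,p^{-(i\nu+t\rho)}\,\sigma(p^t)^{-1}\,\delta_Q(p^t)\,\delta_P(p^t)\,dp.
\]
By Lemma~\ref{lem-big-g-up-to-diffeomorphism}, the supports of $(k,k',p)\mapsto\xi_t(k'(kp^t)^{-1})$ are contained in a common compact subset of $K{\times}K{\times}P$ for $t\in[0,1]$, and on this set the full integrand converges uniformly as $t\to 0$. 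Because $|p^{-i\nu}|=1$ and $\sigma(p^t)^{-1}$ is unitary on $H_\sigma$, the amplitudes are uniformly bounded and the convergence is uniform in $\nu$. Integration over a bounded $p$-volume therefore gives
\[
\sup_{k,k'\in K,\,\nu\in\mathfrak{a}_P^*}\ \bigl\|\mathcal{K}_t(k',k;\nu)-\mathcal{K}_0(k',k;\nu)\bigr\|_{\mathcal{B}(H_\sigma)} \ \longrightarrow\ 0\quad\text{as } t\to 0,
\]
where $\mathcal{K}_0$ is the kernel of $\rho_{\sigma,\nu}(\xi_0)$, identified by the computation \eqref{eq-matrix-coeff-for-g-0} at the end of the proof of Theorem~\ref{thm-limit-formula-for-matrix-coefficients}.

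Finally, I would apply Schur's test for integral operators with operator-valued kernels: the norm of the operator with kernel $\mathcal{K}$ on $L^2(K,H_\sigma)^{K\cap L_P}$ is dominated by $\sup_{k'}\int_K\|\mathcal{K}(k',k)\|_{\mathcal{B}(H_\sigma)}\,dk$. Applied to $\mathcal{K}_t-\mathcal{K}_0$, whose $K{\times}K$-support is uniformly compact and whose supremum norm tends to zero uniformly in $\nu$, this gives the desired norm convergence $\pi_{P,\sigma,t^{-1}\nu}(\xi_t)\to\rho_{\sigma,\nu}(\xi_0)$ uniformly in $\nu\in\mathfrak{a}_P^*$. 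The main obstacle is exactly this uniform-in-$\nu$ control over the unbounded parameter space $\mathfrak{a}_P^*$; it is resolved cleanly because $\nu$ enters the kernel only through the oscillatory character $p\mapsto p^{-i\nu}$ of modulus one. A secondary subtlety, the infinite-dimensionality of $H_\sigma$, causes no difficulty, since Schur's test operates at the level of operator norms and requires no trace-class or Hilbert-Schmidt control on $\sigma$.
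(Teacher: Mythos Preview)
Your argument has a genuine gap precisely at the point you flag as a ``secondary subtlety''.  The kernel
\[
\mathcal{K}_t(k',k;\nu) = \int_P \xi_t\bigl(k'(kp^t)^{-1}\bigr)\,p^{-(i\nu+t\rho)}\,\sigma(p^t)^{-1}\,\delta_Q(p^t)\,\delta_P(p^t)\,dp
\]
takes values in $\mathcal{B}(H_\sigma)$, and you claim the integrand converges uniformly in that norm as $t\to 0$.  But the only $\mathcal{B}(H_\sigma)$-valued ingredient is $\sigma(p^t)^{-1}$, and when $P$ is not a minimal parabolic the discrete series representation $\sigma$ of $M_P$ is infinite-dimensional.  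A unitary representation on an infinite-dimensional Hilbert space is strongly continuous but \emph{not} norm continuous, so $\bigl\|\sigma(p^t)^{-1}-\sigma(k_L)^{-1}\bigr\|_{\mathcal{B}(H_\sigma)}$ does \emph{not} tend to zero as $p^t\to k_L$.  Integrating over a compact set in $p$ does not rescue this: already for the regular representation of $\R$ one checks that $\int_0^1 \sigma(tp)\,dp - I$ fails to converge to $0$ in operator norm as $t\to 0$.  Hence your uniform-kernel estimate, and with it the Schur-test step, breaks down in the non-minimal case.

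The paper avoids this obstacle by a different reduction: it approximates not merely by $C_c^\infty(\mathbf{G})$ but by $K$-\emph{bi-finite} smooth compactly supported functions.  For such $\xi$, all of the operators $\pi_{P,\sigma,t^{-1}\nu}(\xi_t)$ and $\rho_{\sigma,\nu}(\xi_0)$ are supported on a common \emph{finite-dimensional} $K$-isotypical subspace of $\Ind H_\sigma$ (by admissibility), and on a finite-dimensional space the weak convergence supplied by Theorem~\ref{thm-limit-formula-for-matrix-coefficients} is automatically norm convergence, uniformly in $\nu$.  So the infinite-dimensionality of $H_\sigma$ is the heart of the matter, and $K$-bi-finiteness, not Schur's test, is what neutralizes it.
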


\begin{proof}
By an  $\varepsilon/3$-approximation argument, much as in the proof of Theorem~\ref{thm-limit-formula-for-matrix-coefficients}, it suffices to prove the result for all continuous, compactly supported and $K$-bi-finite functions $\xi$ on $\bigG $. 

For such $\xi$, the operators $\pi_{P,\sigma , t^{-1} \nu}(\xi_t)$ and $\rho_{\sigma\vert_{K \cap P} , \nu}(\xi_0)$ are all supported on a common finite-dimensional subspace of $L^2(K,H_\sigma )^{K\cap L_P}$ (in the sense that they and their adjoints are all zero on the orthogonal complement). If $\{ \varphi_{j}\}$ is an orthonormal basis for this finite-dimensional space, then the limit formula for operators stated in the theorem is equivalent to the finite family of limit formulas 
\[
    \lim_{t\to 0}  \bigl \langle \varphi_j,  \pi_{P,\sigma,t^{-1}\nu} (\xi_t)\varphi_k\bigr \rangle  = \bigl \langle \varphi_j, \rho_{\sigma\vert_{K \cap P},\nu} (\xi_0)\varphi_k\bigr \rangle \qquad \forall j,\,\,\,  \forall k 
\]
(with the limits uniform in $\nu$). So an appeal to Theorem~\ref{thm-limit-formula-for-matrix-coefficients} completes the proof. 
\end{proof}

\subsection{Limit formula for rescaling automorphisms}

\begin{definition}
\label{def-lambda-t-isomorphism}
For $t{\ne}0$ we shall denote by 
\begin{equation*}
\lambda_t \colon C^*_r (G_t)\stackrel \cong \longrightarrow C^*_r(G)
\end{equation*}
the $C^*$-algebra isomorphism such that 
\[
  C_c^\infty (G_t)\ni \xi_t \stackrel \cong \longmapsto  |t|^{-\dim (G/K)} \xi_t \in C_c^\infty (G)
\]
(the factor $|t|^{-\dim(G/K)}$  accounts for the change in Haar measures from $G_t$ to $G$).   
\end{definition} 

Here is the main result in Section~\ref{sec-limit-formula}: 

\begin{theorem}
    \label{thm-the-limit-exists-1}
    If  $\{ \xi_t\}_{t\ge 0}$ is any continuous section of the continuous field $\{ C^*_r (G_t)\}$ over $[0,\infty)$, then  the limit 
    \[
    \lim_{t\to 0} \alpha_t (\lambda_t( \xi_t)) 
    \]
    exists in $C^*_r(G)$.
\end{theorem}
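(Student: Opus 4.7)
The plan is to combine the structure theorem for $C^*_r(G)$ (Theorem~\ref{thm-structure-of-reduced-c=star-algebra-2}) with the limit formula for parabolically induced representations (Theorem~\ref{thm-limit-formula-for-representations}), reducing the problem to each associate class $[P,\sigma]$ in turn. As a first step, since $\alpha_t$ is a $C^*$-automorphism and $\lambda_t$ an isomorphism, the map $\xi_t \mapsto \alpha_t(\lambda_t\xi_t)$ is isometric for each $t > 0$, so the set of continuous sections for which $\lim_{t\to 0}\alpha_t(\lambda_t \xi_t)$ exists in $C^*_r(G)$ is norm-closed. It therefore suffices to verify existence on a norm-dense subspace, which I would take to consist of smooth, compactly supported sections of $\mathbf{G}$ that are, in addition, $K{\times}K$-bi-finite. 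For such sections, Frobenius reciprocity ensures that only finitely many associate classes $[P,\sigma]$ carry nonzero image under the structure isomorphism, so that the supremum norm in the $c_0$-direct sum reduces to a maximum over a finite set.

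\textbf{Computation in a single summand.} Fix $[P,\sigma]$. Because $\pi_{P,\sigma,\mu}(\lambda_t\xi_t) = \pi_{P,\sigma,\mu}(\xi_t)$ by the very definition of $\lambda_t$, unwinding Definition~\ref{def-b-operator-and-alpha-w} shows that the $[P,\sigma]$-component of $\alpha_t(\lambda_t\xi_t)$ at $\mu = w(\nu) \in \mathfrak{a}^*_{P,+}$, with $w \in S_\sigma$ and $\nu \in F$, equals
\[
\mathscr{A}_{w,w^{-1}(\sigma),\nu}\,\mathscr{A}^*_{w,w^{-1}(\sigma),t^{-1}\nu}\,\pi_{P,\sigma,t^{-1}w(\nu)}(\xi_t)\,\mathscr{A}_{w,w^{-1}(\sigma),t^{-1}\nu}\,\mathscr{A}^*_{w,w^{-1}(\sigma),\nu}.
\]
The Knapp--Stein intertwining property collapses the middle three factors to $\pi_{P,w^{-1}(\sigma),t^{-1}\nu}(\xi_t)$, giving
\[
\mathscr{A}_{w,w^{-1}(\sigma),\nu}\,\pi_{P,w^{-1}(\sigma),t^{-1}\nu}(\xi_t)\,\mathscr{A}^*_{w,w^{-1}(\sigma),\nu},
\]
in which all $t$-dependence is now confined to the middle factor.

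\textbf{Applying the limit formula.} Invoking Theorem~\ref{thm-limit-formula-for-representations} for the pair $(P, w^{-1}(\sigma))$ (which is again a discrete series of $M_P$), the middle factor converges in operator norm, uniformly in $\nu \in \mathfrak{a}^*_P$, to $\rho_{w^{-1}(\sigma),\nu}(\xi_0)$. Since the flanking intertwiners are independent of $t$ and strongly continuous in $\nu$, conjugation by them is norm-continuous on compact operators, so the full expression converges uniformly on $w(F)\cap \mathfrak{a}^*_{P,+}$ to a continuous function that vanishes at infinity (the latter being inherited as a uniform limit of functions in $C_0$). Finiteness of $S_\sigma$ then gives uniform convergence on $\mathfrak{a}^*_{P,+}$, and finiteness of the relevant associate classes yields convergence in $C^*_r(G)$.

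\textbf{Main obstacle.} The subtlest point is the density reduction: one must show that $K{\times}K$-bi-finite smooth compactly supported sections are norm-dense in continuous sections of $\{C^*_r(G_t)\}$ over $[0,\infty)$, not merely in each individual fiber. The standard Peter--Weyl approximation by $K{\times}K$-isotypic projections (which are contractive and commute with the convolution structure on every $G_t$) should accomplish this, but some care is needed to make the approximation uniform in $t$ near $0$. Without this reduction to finitely many associate classes, one would have to control the tail of the $c_0$-direct sum directly, which is considerably less transparent than the pointwise argument sketched above.
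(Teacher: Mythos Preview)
Your proposal is correct and follows essentially the same route as the paper: reduce to $K$-bi-finite smooth compactly supported sections, observe that only finitely many associate classes contribute, and then apply Theorem~\ref{thm-limit-formula-for-representations} summand by summand. Your explicit computation unwinding $\alpha_t$ in terms of the Knapp--Stein intertwiners is exactly what underlies the paper's invocation of Theorem~\ref{thm-limit-formula-for-representations} (and reappears as the formula in Theorem~\ref{thm-the-limit-exists-2}). The ``main obstacle'' you flag is addressed in the paper as Lemma~\ref{lem-K-finite-generating-family1}, and the finiteness of contributing associate classes is Theorem~\ref{thm-generalized-uniform-admissability} together with Corollary~\ref{cor-K-finite-generating-family2}; the only minor difference is that the paper phrases the reduction in terms of a \emph{generating family} (local approximation, via Lemma~\ref{lem-reduce-to-generating-family}) rather than global norm-density, which is a slightly weaker hypothesis to verify but otherwise equivalent here.
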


The following lemma repeats the first step in our proof of Theorem~\ref{thm-limit-formula-for-matrix-coefficients}: 

\begin{lemma}{\cite[Lemma~5.1.3]{HigsonRoman20}} 
  \label{lem-reduce-to-generating-family}
  If the limit in Theorem~\textup{\ref{thm-the-limit-exists-1}} exists for a generating family of continuous sections of $\{C^*_r (G_t)\}$, then it exists for all continuous sections of $\{ C^*_r (G_t)\}$.
  \end{lemma}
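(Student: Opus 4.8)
The plan is to prove this lemma by a routine three-$\varepsilon$ argument, the only point of substance being that for each $t{>}0$ the composite map $\alpha_t\circ\lambda_t\colon C^*_r(G_t)\to C^*_r(G)$ is \emph{isometric}. Indeed $\lambda_t$ is a $C^*$-algebra isomorphism by Definition~\ref{def-lambda-t-isomorphism}, and $\alpha_t$ is a $C^*$-algebra automorphism of $C^*_r(G)$ (obtained from Theorem~\ref{thm-existence-of-rescaling-automorphisms} by transporting it to $C^*_r(G)$ along the isomorphism of Theorem~\ref{thm-structure-of-reduced-c=star-algebra-2}), so both are norm-preserving, and hence so is their composite.

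With this in hand, here is how I would argue. Let $\mathcal F$ be a generating family of continuous sections for which $\lim_{t\to 0}\alpha_t(\lambda_t(\xi_t))$ exists for every $\xi=\{\xi_t\}\in\mathcal F$, and let $s=\{s_t\}$ be an arbitrary continuous section of $\{C^*_r(G_t)\}$ over $[0,\infty)$. Since $C^*_r(G)$ is complete, it suffices to show that the net $\bigl(\alpha_t(\lambda_t(s_t))\bigr)_{t>0}$ is Cauchy as $t\to 0^+$. Fix $\varepsilon>0$. Applying the defining property of a generating family at the parameter value $t_0=0$, choose $\xi\in\mathcal F$ and a neighborhood $U$ of $0$ in $[0,\infty)$ with $\|s_t-\xi_t\|_{C^*_r(G_t)}<\varepsilon$ for all $t\in U$. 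By the isometry of $\alpha_t\circ\lambda_t$ this gives $\|\alpha_t(\lambda_t(s_t))-\alpha_t(\lambda_t(\xi_t))\|<\varepsilon$ for every $t\in U\setminus\{0\}$. Because $\lim_{t\to0}\alpha_t(\lambda_t(\xi_t))$ exists by hypothesis, there is a smaller neighborhood $U'\subseteq U$ of $0$ with $\|\alpha_t(\lambda_t(\xi_t))-\alpha_{t'}(\lambda_{t'}(\xi_{t'}))\|<\varepsilon$ for all $t,t'\in U'\setminus\{0\}$. The triangle inequality then yields $\|\alpha_t(\lambda_t(s_t))-\alpha_{t'}(\lambda_{t'}(s_{t'}))\|<3\varepsilon$ for all $t,t'\in U'\setminus\{0\}$, which is precisely the Cauchy condition; hence $\lim_{t\to 0}\alpha_t(\lambda_t(s_t))$ exists as well.

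Since the lemma is elementary there is no real obstacle; the one thing to get right is the isometry statement above, which depends on having already built the $\alpha_t$ as genuine $C^*$-algebra automorphisms rather than mere bijections of the tempered dual. I would also note, for completeness, that the hypothesis of the lemma is not vacuous: by the discussion in Section~\ref{sec-continuous-field-of-group-c-star-algebras} the restrictions to the fibers $G_t$ of the functions $\xi\in C_c^\infty(\bigG)$ form a generating family, so in combination with this lemma the proof of Theorem~\ref{thm-the-limit-exists-1} is reduced to establishing the limit for those particular sections, which is where the limit formula of Section~\ref{sec-limit-formula} enters.
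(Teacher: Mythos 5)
Your proof is correct and is precisely the argument the paper has in mind: the paper states the lemma is ``an immediate consequence of the fact that $C^*$-algebra isomorphisms are isometric'' (citing the corresponding result in the complex case), and your three-$\varepsilon$ argument, pivoting on the isometry of $\alpha_t\circ\lambda_t$, is the standard way to make that remark precise. The concluding observation that the hypothesis is satisfied by the sections coming from $C_c^\infty(\bigG)$ is also correct and matches how the lemma is used in the proof of Theorem~\ref{thm-the-limit-exists-1}.
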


Recall that if $K$ acts   continuously on a complex vector   space $W$, then a vector   $w{\in}W$ is said to be \emph{$K$-finite} if the linear span of  the orbit of $w$ under the action of $K$ is finite-dimensional and the action on this finite-dimensional space is continuous, or equivalently if $ w$   lies in the image under the natural map 
\[
\bigoplus_{\theta \in \widehat K}V_\theta \otimes _{\C}  \operatorname{Hom}_K(V_\theta, W) \longrightarrow W
\]
of the span of  finitely many summands $V_\theta \otimes _{\C}  \operatorname{Hom}_K(V_\theta, W)$ (here $V_\theta$ is the representation space for a representative of  $\theta \in \widehat K$).  We shall call the minimal set of $\theta{\in} \widehat K$ here the \emph{$K$-isotypical support} of $w {\in} W$.

\begin{lemma}{\cite[Lemma~5.1.4]{HigsonRoman20}}
  \label{lem-K-finite-generating-family1}
  There exists a generating family of continuous sections for the continuous field $\{ C^*_r (G_t)\}$ consisting of smooth and compactly supported functions on $\bigG $ that are $K$-finite for both the left and right translation actions of $K$ on $\bigG $.
  \end{lemma}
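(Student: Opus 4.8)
The plan is to start from the fact, recorded in Section~\ref{sec-continuous-field-of-group-c-star-algebras}, that $C_c^\infty(\bigG)$ provides a generating family of continuous sections, and to average each such section over $K\times K$ against $K$-finite functions, so as to produce $K$-bi-finite sections that are still smooth and compactly supported on $\bigG$ and that approximate the original one uniformly on compact subsets of $[0,\infty)$ in the fiberwise norm. Since a family of sections that approximates a generating family uniformly on compact subsets of the parameter space is again generating, this will give the lemma.

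First I would record that the left and right translation actions of $K$ on $G$ restrict to the left and right translation actions of $K$ on the subgroup $K$, so that, by functoriality of the deformation to the normal cone and its smooth dependence on parameters, they extend to a smooth action of $K\times K$ on $\bigG$, given on the fiber $G_t$ by $(k_1,k_2)\cdot g_t = k_1 g_t k_2^{-1}$. Pulling a function back along these diffeomorphisms and integrating, one obtains for $\phi_1,\phi_2\in C^\infty(K)$ and $\xi\in C_c^\infty(\bigG)$ a function
\[
L_{\phi_1}R_{\phi_2}\xi \;:=\; \int_K\!\int_K \phi_1(k_1)\,\phi_2(k_2)\,\bigl(L_{k_1}R_{k_2}\xi\bigr)\, dk_1\, dk_2 ,
\]
which is again smooth and compactly supported, with support inside the compact set $K\cdot\operatorname{supp}(\xi)\cdot K$; moreover, because $L_h\bigl(L_{\phi_1}R_{\phi_2}\xi\bigr) = L_{L_h\phi_1}R_{\phi_2}\xi$ and similarly for the right action, if $\phi_1$ and $\phi_2$ are $K$-finite then $L_{\phi_1}R_{\phi_2}\xi$ is $K$-bi-finite.

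Next I would observe that, at the level of the reduced $C^*$-algebras, the operation $\xi_t\mapsto L_{k_1}R_{k_2}\xi_t$ on $C_c^\infty(G_t)$ corresponds in the left regular representation to $T\mapsto\rho_t(k_1)\,T\,\rho_t(k_2)^{-1}$, which is isometric and depends norm-continuously on $(k_1,k_2)$ on the dense subalgebra $\pi_{G_t}[C_c^\infty(G_t)]$, since translation is continuous in $L^1(G_t)$. Hence $K\times K$ acts isometrically and strongly continuously on each $C^*_r(G_t)$, and these assemble into an isometric, strongly continuous action of $K\times K$ on the $C^*$-algebra $A_I$ of continuous sections of $\{C^*_r(G_t)\}$ over any compact interval $I\subseteq[0,\infty)$, strong continuity being checked first on the dense subalgebra $C_c^\infty(\bigG)|_I$, where it is immediate. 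The point of passing through this manifestly isometric action is that it avoids any appearance of the Haar measures $dg_t$, which degenerate as $t\to 0$.

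Finally comes the approximation itself: fix $\xi\in C_c^\infty(\bigG)$, a compact interval $I$, and $\varepsilon>0$. Choosing a smooth approximate identity $\phi$ on $K$ supported in a small neighborhood of $e$, strong continuity of the action on $A_I$ gives $\|L_\phi R_\phi\xi - \xi\|_{A_I}<\varepsilon/2$ once $\phi$ is sufficiently concentrated. By the Peter--Weyl theorem the $K$-finite functions are dense in $L^1(K)$, so we may choose a $K$-finite $\psi$ with $\|\psi-\phi\|_{L^1(K)}$ small; using the isometry of the action, $\|L_\psi R_\psi\xi - L_\phi R_\phi\xi\|_{A_I}\le\bigl(\|\psi\|_{L^1(K)}+\|\phi\|_{L^1(K)}\bigr)\,\|\psi-\phi\|_{L^1(K)}\,\|\xi\|_{A_I}<\varepsilon/2$. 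Then $\eta:=L_\psi R_\psi\xi$ is a $K$-bi-finite element of $C_c^\infty(\bigG)$ with $\sup_{t\in I}\|\eta_t-\xi_t\|_{C^*_r(G_t)}<\varepsilon$, and letting $\xi$ run over $C_c^\infty(\bigG)$, $I$ over compact intervals, and $\varepsilon\to0$ yields the required generating family. The only steps that are not purely formal are the extension of the $K\times K$-translation action to the deformation space and the organization of the estimates through the isometric action on $A_I$ rather than through $L^1$-bounds on the fibers; I expect the former to be the main point to get right, though it is a routine consequence of the construction recalled in Section~\ref{sec-dnc}.
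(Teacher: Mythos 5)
Your proof is correct and follows essentially the approach of the cited \cite[Lemma~5.1.4]{HigsonRoman20}: extend the $K\times K$ translation action to $\bigG$ by functoriality of the deformation to the normal cone, use the resulting isometric and strongly continuous action on section algebras over compact intervals, and average elements of $C_c^\infty(\bigG)$ against $K$-finite approximate identities on $K$ via $L^1(K)$-estimates. (A harmless slip: the covariance identity should read $L_h(L_{\phi_1}R_{\phi_2}\xi)=L_{L_{h^{-1}}\phi_1}R_{\phi_2}\xi$ rather than $L_{L_h\phi_1}R_{\phi_2}\xi$, which does not affect the conclusion that $L_{\phi_1}R_{\phi_2}\xi$ is $K$-bi-finite when $\phi_1,\phi_2$ are $K$-finite.)
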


\begin{theorem}
\label{thm-generalized-uniform-admissability} Each  irreducible representation of $K$ occurs as  a $K$-type in only finitely many unitary equivalence classes of   principal series representations of the form $\pi_{\sigma,0}=\Ind_{P}^G \sigma\otimes 1 $, with $P{=}M_PA_PN_P$ a standard parabolic subgroup of $G$, and $\sigma$ a square-integrable irreducible unitary representation of $M_P$.
\end{theorem}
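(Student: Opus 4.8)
The plan is to combine Frobenius reciprocity for the compact group $K$ with the classical finiteness of the set of discrete series of a reductive group that contain a prescribed $K$-type. Since $G$ has only finitely many standard parabolic subgroups, and every principal series of the stated form is conjugate to some $\Ind_P^G \sigma\otimes 1$ with $P=M_PA_PN_P$ a standard parabolic and $\sigma$ a square-integrable representation of $M_P$ (so that $P$ is automatically cuspidal), it suffices to fix one such $P$ and show that a given $\theta\in\widehat K$ occurs in $\pi_{P,\sigma,0}=\Ind_P^G\sigma\otimes 1$ for only finitely many equivalence classes of discrete series $\sigma$ of $M_P$.

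First I would pass to the compact model. There the space of $\pi_{P,\sigma,0}$ is $\Ind H_\sigma=L^2(K,H_\sigma)^{K\cap L_P}$ with $K$ acting by left translation, so as a representation of $K$ it is $\Ind_{K\cap L_P}^K\bigl(\sigma\vert_{K\cap L_P}\bigr)$. Because $L_P=M_PA_P$ is a direct product and the vector group $A_P$ has no nontrivial compact subgroups, $K\cap L_P=K\cap M_P=:K_M$ is exactly the maximal compact subgroup of $M_P$. Frobenius reciprocity therefore identifies the multiplicity of $\theta$ in $\pi_{P,\sigma,0}$ with $\dim\operatorname{Hom}_{K_M}\bigl(\theta\vert_{K_M},\sigma\vert_{K_M}\bigr)$; thus $\theta$ occurs in $\pi_{P,\sigma,0}$ if and only if $\theta\vert_{K_M}$ and $\sigma\vert_{K_M}$ share an irreducible constituent. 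Since $\theta$ is finite dimensional, $\theta\vert_{K_M}$ has only finitely many constituents $\mu_1,\dots,\mu_r\in\widehat{K_M}$, and the problem reduces to the claim that each $\mu_i$ occurs as a $K_M$-type of only finitely many discrete series of $M_P$.

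This last claim is the one place genuine representation theory enters, and I would deduce it from Vogan's minimal $K$-type theorem. A discrete series $\sigma$ of $M_P$ is tempered and has real infinitesimal character — its infinitesimal character is carried by a compact Cartan subalgebra of $\mathfrak{m}_P$, hence has vanishing image in $\mathfrak{a}_{M_P}^*/W$ — so $\sigma$ is tempiric, and Theorem~\ref{thm-vogan-classification-of-tempiric-reps} (applied to $M_P$ and $K_M$) gives it a unique minimal $K_M$-type $\Lambda_\sigma$, with $\sigma\mapsto\Lambda_\sigma$ injective on tempiric representations of $M_P$. If $\mu_i$ is a $K_M$-type of $\sigma$, then by minimality $\|\Lambda_\sigma\|\le\|\mu_i\|$, so $\Lambda_\sigma$ lies in the finite set $\{\lambda\in\widehat{K_M}:\|\lambda\|\le\max_i\|\mu_i\|\}$ (weights of bounded norm form a finite set, even when $K_M$ is disconnected). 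Injectivity of $\sigma\mapsto\Lambda_\sigma$ then bounds the number of $\sigma$ containing some $\mu_i$. Summing over the finitely many standard cuspidal parabolics completes the argument.

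The only real obstacle is the finiteness of discrete series containing a fixed $K_M$-type; everything else is bookkeeping with Frobenius reciprocity and the identification $K\cap L_P=K_M$. One should verify the assertion that discrete series are tempiric, so that Theorem~\ref{thm-vogan-classification-of-tempiric-reps} applies; if one prefers to bypass this, the same finiteness follows from the Blattner/Hecht--Schmid description of the $K_M$-types of a discrete series, the elementary norm estimate $\|\lambda\|\le\|\mu+2\rho_{K_M}\|$ for any $K_M$-type $\mu$ of the discrete series with Harish-Chandra parameter $\lambda$, and the finiteness of discrete series parameters of bounded norm.
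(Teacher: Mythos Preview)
The paper states Theorem~\ref{thm-generalized-uniform-admissability} without proof, treating it as a known fact, so there is no argument in the paper to compare yours against.

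Your proof is correct. The reduction via the compact model and Frobenius reciprocity to the claim that a fixed $\mu\in\widehat{K_M}$ is a $K_M$-type of only finitely many discrete series of $M_P$ is clean, and your use of Theorem~\ref{thm-vogan-classification-of-tempiric-reps} to settle that claim is legitimate: discrete series of $M_P$ are indeed tempiric (they are tempered, irreducible, and the imaginary part of the infinitesimal character vanishes, since the Harish-Chandra parameter lives on a compact Cartan; the paper itself uses the analogous fact implicitly in the proof of Lemma~\ref{lem-composition-of-pi-with-alpha-1}), so Vogan's bijection applies to $M_P$ and $K_M$, and the norm bound $\|\Lambda_\sigma\|\le\|\mu\|$ together with the finiteness of $\{\lambda\in\widehat{K_M}:\|\lambda\|\le C\}$ finishes it. The finiteness of that last set for possibly disconnected $K_M$ deserves one sentence of justification (restriction to the identity component has finite fibers and the norm is determined by any highest weight in the restriction), but this is routine. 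Your alternative route via Blattner/Hecht--Schmid would also work and is perhaps closer to how this result is usually stated in the literature on uniform admissibility.
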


\begin{proof}
Fix a standard parabolic subgroup $P=M_PA_PN_P$.  Harish-Chandra proved in  \cite{HarishChandra66} (see \cite[\S~7.7]{Wallach1} for an exposition) that  each irreducible representation of $K{\cap} M_P$ occurs   in at most  finitely many mutually inequivalent square-integrable representations $\sigma$ of $M_P$.  By Frobenius reciprocity, an irreducible representation $\theta$ of $K$ occurs in $\pi_{\sigma,0}$ if and only if some irreducible constituent of $\theta\vert_{K\cap M_P}$ occurs in $\sigma$.  So $\theta$ occurs in at most finitely many of the representations $\pi_{\sigma,0}$.  The theorem follows from this because  there are only finitely many standard parabolic subgroups.
\end{proof}

\begin{corollary}[See {\cite[Lemma~5.1.5]{HigsonRoman20}}]
  \label{cor-K-finite-generating-family2}
Let $\{\xi_t\}$ be a  right $K$-finite   continuous section of  $\{ C^*_r (G_t)\}$. If for every associate class $[P,\sigma]$ the norm limit 
\[
\lim_{t\to 0} \pi_{P,\sigma} (\alpha_t (\lambda_t(\xi_t)))
\]
exists, then the norm limit 
$\lim_{t\to 0} \alpha_t (\lambda_t (\xi_t))$
exists in $C^*_r (G)$.
\end{corollary}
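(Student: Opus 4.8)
The plan is to reduce everything to the structure isomorphism of Theorem~\ref{thm-structure-of-reduced-c=star-algebra-2}, which presents $C^*_r(G)$ as the $c_0$-direct sum $\bigoplus_{[P,\sigma]} C_0(\mathfrak{a}^*_{P,+},\compop(\Ind H_\sigma))^{R_\sigma}$ over associate classes, with the $\pi_{P,\sigma}$ as the coordinate projections. Under this identification the rescaling automorphisms $\alpha_t$ act coordinatewise (this is exactly how $\alpha_t$ was transferred to $C^*_r(G)$ in Section~\ref{sec-rescaling-without-w-prime-groups}, using Theorem~\ref{thm-existence-of-rescaling-automorphisms}), so in particular each $\alpha_t$ preserves every ideal $\ker\pi_{P,\sigma}$. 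Since the $c_0$-direct sum norm is the supremum of the coordinate norms, the path $t\mapsto\alpha_t(\lambda_t\xi_t)$ in $C^*_r(G)$ will have a limit as $t\to 0$ as soon as: (a) each coordinate $t\mapsto\pi_{P,\sigma}(\alpha_t(\lambda_t\xi_t))$ has a limit; and (b) there is a single finite set $S$ of associate classes outside of which all of these coordinates vanish identically in $t$. Hypothesis (a) is precisely what is being assumed, so the whole argument comes down to establishing (b).

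To prove (b) I would combine the right $K$-finiteness of the section with the uniform admissibility statement of Theorem~\ref{thm-generalized-uniform-admissability}. Right $K$-finiteness of $\{\xi_t\}$ means there is a finite set $F\subseteq\widehat K$ --- which we may enlarge to be stable under contragredients --- containing the right $K$-isotypical support of every $\xi_t$, and hence of every $\lambda_t\xi_t\in C^*_r(G)$, since $\lambda_t$ is merely multiplication of functions by the scalar $|t|^{-d}$ and $G_t=G$ as a group. Writing $\mathbb{1}_F$ for the projection in the convolution algebra of $K$, sitting inside $C^*_r(G)$, that cuts out the $F$-isotypical part, right $K$-finiteness gives $\lambda_t\xi_t=(\lambda_t\xi_t)*\mathbb{1}_F$, and therefore for every $\nu$
\[
\pi_{P,\sigma,\nu}(\lambda_t\xi_t)=\pi_{P,\sigma,\nu}(\lambda_t\xi_t)\,\pi_{P,\sigma,\nu}(\mathbb{1}_F),
\]
where $\pi_{P,\sigma,\nu}(\mathbb{1}_F)$ is the orthogonal projection of $\Ind H_\sigma$ onto its $F$-isotypical subspace for the $K$-action --- a projection independent of $\nu$, because the $K$-representation on the compact model is. Consequently $\pi_{P,\sigma}(\lambda_t\xi_t)=0$ whenever the $K$-module $\Ind H_\sigma$ contains no $K$-type lying in $F$. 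Now Theorem~\ref{thm-generalized-uniform-admissability} asserts that each $\theta\in F$ occurs as a $K$-type of $\pi_{P,\sigma,0}$ --- equivalently, of the $\nu$-independent $K$-module $\Ind H_\sigma$ --- for only finitely many associate classes $[P,\sigma]$. Letting $S$ be the union, over the finitely many $\theta\in F$, of these finite sets, we obtain a finite set of associate classes with $\pi_{P,\sigma}(\lambda_t\xi_t)=0$ for all $t>0$ whenever $[P,\sigma]\notin S$. Since $\alpha_t$ preserves $\ker\pi_{P,\sigma}$, this gives $\pi_{P,\sigma}(\alpha_t(\lambda_t\xi_t))=0$ for all $t>0$ and all $[P,\sigma]\notin S$, which is statement (b).

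It then remains to assemble the limit. For $[P,\sigma]\in S$ put $L_{[P,\sigma]}=\lim_{t\to 0}\pi_{P,\sigma}(\alpha_t(\lambda_t\xi_t))$, which exists by hypothesis, and put $L_{[P,\sigma]}=0$ otherwise; this finitely supported tuple determines an element $L\in C^*_r(G)$ through the structure isomorphism, and
\[
\|\alpha_t(\lambda_t\xi_t)-L\|=\sup_{[P,\sigma]}\bigl\|\pi_{P,\sigma}(\alpha_t(\lambda_t\xi_t))-L_{[P,\sigma]}\bigr\|=\max_{[P,\sigma]\in S}\bigl\|\pi_{P,\sigma}(\alpha_t(\lambda_t\xi_t))-L_{[P,\sigma]}\bigr\|
\]
tends to $0$ as $t\to 0$, being a maximum over a finite index set of quantities that each tend to $0$. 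Hence $\alpha_t(\lambda_t\xi_t)\to L$ in $C^*_r(G)$, as required.

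I do not expect any genuine obstacle here; the argument is organizational, and all the analytic content sits in results already available (the structure theorem, the coordinatewise description of the rescaling automorphisms, and the uniform admissibility theorem). The step needing the most care is (b): one must pin down the correct reading of ``right $K$-finite continuous section'' --- namely that the isotypical supports of the $\xi_t$ lie in one fixed finite set $F$, which is automatic for the generating family supplied by Lemma~\ref{lem-K-finite-generating-family1} --- and one must be sure that Theorem~\ref{thm-generalized-uniform-admissability} is being applied to all cuspidal parabolic pairs $(P,\sigma)$ and is insensitive to the continuous parameter $\nu$, which it is since the $K$-types of a cuspidal principal series do not depend on $\nu$.
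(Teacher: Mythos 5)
Your proof is correct and takes essentially the same approach as the paper: both reduce the problem via the structure isomorphism of Theorem~\ref{thm-structure-of-reduced-c=star-algebra-2} to a $c_0$-direct sum over associate classes, and both invoke the finiteness statement of Theorem~\ref{thm-generalized-uniform-admissability} together with right $K$-finiteness to conclude that only finitely many coordinates are nonzero. The paper's own proof is just a two-line pointer to the earlier complex-group paper; you have filled in precisely the details that reference contains.
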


\begin{proof}
It suffices to show that a single fixed $K$-type $\theta$ is a $K$-type for only finitely many $\pi_{P,\sigma}$, which is guaranteed by the previous theorem. See the proof in \cite[Lemma~5.1.5]{HigsonRoman20} for details. 
\end{proof}

\begin{proof}[Proof of Theorem~\ref{thm-the-limit-exists-1}]
According to Lemma~\ref{lem-reduce-to-generating-family}, we only need to verify that the limit in the statement of the theorem exists for a generating family of continuous sections, and we shall use Lemma~\ref{lem-K-finite-generating-family1} to work with the generating family of continuous sections $\{ \xi_t\}$ associated to the   smooth, compactly supported, left and right $K$-finite functions on $\bigG$. Theorem~\ref{thm-limit-formula-for-representations} shows that for each associate class representative, the limit
\[
\lim_{t\rightarrow 0}  \pi_{P,\sigma}(\alpha_t(\xi_t))
\]
exists in $C_0(\mathfrak{a}_{P,+}^*,\compop(\Ind H_\sigma )) ^{W_\sigma}$. Corollary~\ref{cor-K-finite-generating-family2} completes the proof.
\end{proof}

The following formula for the limit in Theorem~\ref{thm-the-limit-exists-1} is an immediate consequence of Theorem~\ref{thm-limit-formula-for-representations} and the definition of the rescaling automorphisms: 

\begin{theorem}
    \label{thm-the-limit-exists-2}
    If  $\{ \xi_t\}_{t\ge 0}$ is any continuous section of the continuous field $\{ C^*_r (G_t)\}$ over $[0,\infty)$,   then for each chosen associate class representative $(P,\sigma)$ \textup{(}see the discussion at the beginning of this section\textup{)}, if $\nu{\in} F$,   if $w{\in} S_\sigma$, and if $w(\nu) {\in}   \mathfrak{a}^*_{P,+}$, then 
\[
    \pi_{P,\sigma,w(\nu)}\bigl ( \lim_{t\to 0} \alpha_t ( \lambda_t(\xi_t) )\bigr )  = \mathscr{A}'_{w,w^{-1}(\sigma),\nu}\cdot \rho_{w^{-1}(\sigma)\vert_{K\cap P},\nu} (\xi_0)\cdot \mathscr{A}'{}^{*}_{\!\!\!w,w^{-1}(\sigma),\nu}.
\]
    \qed
\end{theorem}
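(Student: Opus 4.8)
### Proof proposal for Theorem~\ref{thm-the-limit-exists-2}

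The plan is to trace $\lim_{t\to 0}\alpha_t(\lambda_t(\xi_t))$ through the three ingredients that already appear in the excerpt: the limit formula for representations (Theorem~\ref{thm-limit-formula-for-representations}), the explicit formula for the rescaling automorphisms (the formula \eqref{eq-formula-for-alpha-t-recap} for $\alpha_t$, packaged in Definition~\ref{def-b-operator-and-alpha-w}), and the cocycle relation \eqref{eq-cocycle-relation} together with the scalar-triviality of $\mathscr{A}'_{w_0,\sigma,\nu}$ for $w_0\in W_{0,\sigma}$ that was invoked in the proof of Theorem~\ref{thm-structure-of-reduced-c=star-algebra-2}.

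First I would fix $\nu\in\mathfrak{a}^*_P$ and the data $w\in S_\sigma$, $w_0\in W_{0,\sigma}$ with $w_0w(\nu)\in F\cap\mathfrak{a}^*_{P,+}$, and write $\mu = w_0w(\nu)\in F$, so that $\nu = (w_0w)^{-1}(\mu)$. Applying $\pi_{P,\sigma,\nu}$ to $\lim_{t\to 0}\alpha_t(\lambda_t(\xi_t))$ means evaluating the limiting section at the point $\nu=(w_0w)^{-1}(\mu)$. The defining formula for $\alpha_t$ on the chamber $(w_0w)(F) \ni \mu$ — equivalently, I must use the representative in $S_\sigma$ carrying $F$ to the chamber containing $\nu$; but since $w_0\in W_{0,\sigma}$ acts trivially (up to scalar) on the intertwiners at the relevant points, the chamber bookkeeping reduces, as in Theorem~\ref{thm-rescaling-autos-on-full-space-of-continuous-fns}, to the single element $w_0w$ — gives
\[
\pi_{P,\sigma,\nu}\bigl(\alpha_t(\lambda_t(\xi_t))\bigr) = \mathscr{B}_{w_0w,\mu,t}\;\bigl(\lambda_t(\xi_t)\bigr)\text{-value at }t^{-1}\nu\;\mathscr{B}^*_{w_0w,\mu,t},
\]
where $\mathscr{B}_{w_0w,\mu,t} = \mathscr{A}^{\phantom{*}}_{w_0w,w^{-1}(\sigma),\mu}\,\mathscr{A}^*_{w_0w,w^{-1}(\sigma),t^{-1}\mu}$ (here I have used $w_0^{-1}w^{-1}(\sigma) = w^{-1}(\sigma)$ up to equivalence, since $w_0\in W_{0,\sigma}$ fixes the class of $\sigma$; a small lemma handling the equivalence $E$ as in \eqref{eq-equivalence-of-reps-e} may be needed here). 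The value being conjugated is $\pi_{P,\sigma,t^{-1}\nu}(\lambda_t(\xi_t)) = \pi_{P,\sigma,t^{-1}\nu}(\xi_t)$ as an operator on $\Ind H_\sigma$ — but to match the shape of the answer I would first transport through the intertwiner $\mathscr{A}_{w_0w,w^{-1}(\sigma),t^{-1}\mu}$ to rewrite it as an operator intertwined from $\pi_{P,w^{-1}(\sigma),t^{-1}\mu}$, i.e. reorganize the conjugation so that the innermost operator is $\pi_{P,w^{-1}(\sigma),t^{-1}\mu}(\xi_t)$ conjugated only by $\mathscr{A}_{w_0w,w^{-1}(\sigma),\mu}$ on the outside.

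Next I would take $t\to 0$. By Theorem~\ref{thm-limit-formula-for-representations} applied with the parabolic $P$, the discrete series representation $w^{-1}(\sigma)$ of $M_P$, and the (fixed) parameter $\mu$, one has $\pi_{P,w^{-1}(\sigma),t^{-1}\mu}(\xi_t)\to\rho_{w^{-1}(\sigma),\mu}(\xi_0)$ in norm, uniformly in $\mu$. The outer intertwiners $\mathscr{A}_{w_0w,w^{-1}(\sigma),\mu}$ do not depend on $t$, so the conjugation passes to the limit to give $\mathscr{A}_{w_0w,w^{-1}(\sigma),\mu}\,\rho_{w^{-1}(\sigma),\mu}(\xi_0)\,\mathscr{A}^*_{w_0w,w^{-1}(\sigma),\mu}$. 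Finally I would convert the unadjusted intertwiners back to the adjusted ones $\mathscr{A}'$ appearing in the statement, using \eqref{eq-adjusted-knapp-stein-ops} and \eqref{eq-knapp-stein-a-and-e-relation} to absorb the equivalence $E$, and use that $\mu = w_0w(\nu)$ to rename $\mu$ back to $w_0w(\nu)$ — or, more carefully, track that the base point of the adjusted operator in the statement is $\nu$ rather than $\mu$, which is exactly where the cocycle relation and $W_{0,\sigma}$-triviality combine: $\mathscr{A}'_{w_0w,w^{-1}(\sigma),\nu}$ and the composite $\mathscr{A}'_{w_0,\dots}\mathscr{A}'_{w,\dots}$ differ by a scalar, and the scalar cancels between the two sides of the conjugation. (Strictly, $\rho$ should be evaluated at $\nu$, not $\mu$; I would double-check the index conventions here against Definition~\ref{def-rho-theta-nu} and the statement — the representation $\rho_{w^{-1}(\sigma),\nu}$ in the theorem is built from $\nu$, which forces a final reflection $w_0w$ to move $\mu$ to $\nu$ inside $\rho$ as well, so the bookkeeping must be done consistently on both the $\rho$ and the $\mathscr{A}'$ factors.)

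The main obstacle I anticipate is precisely this index bookkeeping: keeping straight which chamber $\nu$ lies in versus the fundamental domain $F$, which representative of $S_\sigma$ implements the rescaling there, how $w_0\in W_{0,\sigma}$ enters (it must, because the statement's right-hand side uses $w_0w$, not just $w$), and ensuring that the equivalences $E\colon H_{w(\sigma)}\to H_\sigma$ and the modulus-one scalars introduced by the adjusted operators $\mathscr{A}'$ all cancel between the operator and its adjoint in the conjugation. None of this is deep — it is all a consequence of the cocycle relation \eqref{eq-cocycle-relation}, the scalar-triviality of $\mathscr{A}'_{w_0,\sigma,\nu}$ for $w_0\in W_{0,\sigma}$, and Definition~\ref{def-b-operator-and-alpha-w} — but it is the kind of calculation where a sign or an index error is easy to make, so I would present it by first establishing the $t$-independent limit cleanly and only then doing the cocycle manipulation to bring it into the stated form.
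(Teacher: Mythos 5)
The paper offers no argument for this theorem beyond declaring it ``an immediate consequence of Theorem~\ref{thm-limit-formula-for-representations} and the definition of the rescaling automorphisms,'' and your proposal unpacks exactly that: read off $\alpha_t$ from Definition~\ref{def-b-operator-and-alpha-w}, conjugate so that the $t$-dependent Knapp--Stein factor $\mathscr{A}^*_{\cdot,\cdot,t^{-1}\nu'}$ is absorbed by the intertwining property of $\pi_{P,\cdot,t^{-1}\nu'}$, apply Theorem~\ref{thm-limit-formula-for-representations} to the inner factor, and then tidy up with the cocycle relation \eqref{eq-cocycle-relation} and the scalar-triviality of $\mathscr{A}'_{w_0,\sigma,\cdot}$ for $w_0\in W_{0,\sigma}$.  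That conjugation step is the real content (without it one would be asked to take $\lim_{t\to 0}\mathscr{A}_{\cdot,\cdot,t^{-1}\nu'}$, which need not exist), and you identify it correctly.

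One place your sketch should be tightened: you invoke Definition~\ref{def-b-operator-and-alpha-w} directly with ``$w' = w_0w$,'' but that definition only produces $\mathscr{B}_{w',\nu',t}$ for $w'\in S_\sigma$, and $w_0w$ mixes $W_{0,\sigma}$ with $S_\sigma$, so it is generally not an element of $S_\sigma$. The cleaner route is the one your own cocycle remark points to: first evaluate at $\mu = w_0w(\nu) \in F\cap\mathfrak{a}^*_{P,+}$, where the $S_\sigma$-representative is trivial and $\mathscr{B}_{e,\mu,t}=\mathrm{id}$, obtaining $\pi_{P,\sigma}(\eta)(\mu) = \rho_{\sigma,\mu}(\xi_0)$ from the limit formula alone; then transport from $\mu$ to $\nu$.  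Because $\mathscr{A}'_{w_0,\sigma,\cdot}$ is a scalar for $w_0\in W_{0,\sigma}$ (used already in the proof of Theorem~\ref{thm-structure-of-reduced-c=star-algebra-2}), the cocycle relation gives $\mathscr{A}'_{w_0w,w^{-1}(\sigma),\nu}=(\text{scalar})\cdot\mathscr{A}_{w,w^{-1}(\sigma),\nu}$, and the scalar cancels in the conjugation, so the $w_0$ contributes nothing substantive — exactly the ``reduces to'' phenomenon you allude to.  This also resolves your worry about $(w_0w)^{-1}(\sigma)$ versus $w^{-1}(\sigma)$: the equivalence $E$ that adjusts the Knapp--Stein operator absorbs the $w_0$-twist, and it disappears from the conjugated expression. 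Your flag that the index of $\rho$ (at $\nu$ versus at $\mu$) needs to be cross-checked against Definition~\ref{def-rho-theta-nu} is also well placed; this is precisely the part of the bookkeeping that must come out consistently after the transport.
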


Thus (a unitary conjugate of) the representation $\rho_{\sigma\vert_{K \cap P}, \nu}$ of $G_0$ is a kind of limit of the representations $\pi_{P,\sigma,t^{-1}\nu}$ of $G$.  This relationship will be made precise in Lemma \ref{lem-composition-of-pi-with-alpha-1}.

\section{The main results}

\label{sec-bijection-characterization}

In this section we shall use the limit formula and the rescaling automorphisms constructed above to construct the Mackey embedding, following the strategy laid out in Lemma \ref{lem-embedding-from-limit-formula-and-mapping-cone-field}. In the three final subsections, we shall use the embedding to give simple characterizations of the  continuous field of $C^*$-algebras $\{ C^*_r (G_t)\}$, the Connes-Kasparov isomorphism,  and the Mackey bijection of Afgoustidis.

\subsection{Construction of the Mackey embedding}

Let $\xi_0\in C^*_r(G_0)$. Extend $\xi_0$  in any way to a continuous section $\{\xi_t\}$ of $\{C^*_r(G_t)\}$ and then form the limit
\begin{equation}
\label{eq-embedding-formula}
\alpha (\xi_0)= \lim_{t\rightarrow 0} \alpha_t (\lambda_t(\xi_t)) 
\end{equation}
in $C^*_r(G)$ using Theorem~\ref{thm-the-limit-exists-1}.

\begin{definition}
\label{def-mackey-embedding}
The Mackey embedding is the $C^*$-algebra homomorphism 
\[
\alpha \colon C^*_r(G_0)\longrightarrow C^*_r (G).
\]
determined by the formula \eqref{eq-embedding-formula} above.
\end{definition}

\begin{remarks}
If $\{ \xi'_t\}$ is a second extension of $\xi_0$ to a continuous section, then 
\[
\| \lim_{t\rightarrow 0}\alpha_t( \lambda_t (\xi_t)) -  \lim_{t\rightarrow 0}\alpha_t( \lambda_t (\xi'_t))\| 
=   \|  \lim_{t\rightarrow 0} \alpha_t( \lambda_t (\xi_t - \xi'_t)) \| 
= 0 .
\]
So the limit in \eqref{eq-embedding-formula} is independent of the extension of $\xi_0$ to a continuous section, and the Mackey embedding is well-defined.
Moreover, since the homomorphisms $\lambda_t$ and $\alpha_t$ are isometric, 
\[
\| \alpha (\xi_0)\| = \| \lim_{t\rightarrow 0} \alpha_t (\lambda_t(\xi_t)) \| 
= 
\lim_{t\rightarrow 0} \| \xi_t \|  = \| \xi_0\| ,
\]
and therefore the Mackey embedding is  an isometric embedding of $C^*$-algebras, as its name suggests.
\end{remarks}

\subsection{A characterization of the continuous field}  
\label{sec-mapping-cone}

We shall apply the mapping cone  construction from Definition~\ref{def-mapping-cone-field} to  the Mackey embedding  in Definition~\ref{def-mackey-embedding}.   

\begin{theorem} The $C^*$-algebra isomorphisms
\[
\begin{cases} 
\alpha_t \circ \lambda_t  \colon C^*_r (G_t) \longrightarrow C^*_r (G) & t \ne 0 \\
\;\;\; \operatorname{id} \colon  C^*_r (G_0) \longrightarrow C^*_r (G_0) & t = 0
\end{cases}
\]
define an isomorphism of continuous fields of $C^*$-algebras
from the continuous field 
$\{C^*_r(G_t)\}$ constructed from the smooth family $\{ G_t\}$  to  the mapping cone field for the embedding 
\[
\alpha \colon C^*_r(G_0)\longrightarrow C^*_r (G).
\]
\end{theorem}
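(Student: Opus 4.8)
The plan is to deduce the theorem directly from Lemma~\ref{lem-embedding-from-limit-formula-and-mapping-cone-field}, applied to the continuous field $\{A_t\}=\{C^*_r(G_t)\}$ over $[0,\infty)$, to $A=C^*_r(G)$, and to the family of $C^*$-algebra isomorphisms
\[
\alpha_t\circ\lambda_t\colon C^*_r(G_t)\longrightarrow C^*_r(G)\qquad (t>0),
\]
where $\lambda_t$ is the isomorphism of Definition~\ref{def-lambda-t-isomorphism} and $\alpha_t$ is the rescaling automorphism of $C^*_r(G)$ obtained by transporting, \textit{via} the isomorphism of Theorem~\ref{thm-structure-of-reduced-c=star-algebra-2}, the automorphisms of Theorem~\ref{thm-existence-of-rescaling-automorphisms} of each summand. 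Each $\alpha_t\circ\lambda_t$ is then a $C^*$-algebra isomorphism, being a composite of an isomorphism with an automorphism.

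First I would verify that the family $\{\alpha_t\circ\lambda_t\}_{t>0}$ is continuous in the sense required by Lemma~\ref{lem-embedding-from-limit-formula-and-mapping-cone-field}, that is, that $t\mapsto(\alpha_t\circ\lambda_t)(\xi_t)$ is norm-continuous on $(0,\infty)$ for every continuous section $\{\xi_t\}$ of $\{C^*_r(G_t)\}$. By Lemma~\ref{lem-reduce-to-generating-family} it suffices to treat the generating family associated with functions $\xi\in C_c^\infty(\mathbf G)$, for which $\lambda_t(\xi_t)=|t|^{-d}\xi_t$ as functions on $G$; since $t\mapsto|t|^{-d}$ is continuous on $(0,\infty)$ and $\xi$ is smooth and compactly supported on $\mathbf G$, these functions vary continuously in $L^1(G)$, hence in $C^*_r(G)$, over compact subsets of $(0,\infty)$. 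Combining this with the continuity of the one-parameter group $\{\alpha_t\}$ (and the fact that the $\alpha_t$ are isometric, which upgrades pointwise to joint continuity on compacta) gives the continuity of $t\mapsto(\alpha_t\circ\lambda_t)(\xi_t)$ away from $0$.

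Next I would invoke Theorem~\ref{thm-the-limit-exists-1}, which asserts exactly that for every continuous section $\{\xi_t\}$ of $\{C^*_r(G_t)\}$ over $[0,\infty)$ the limit $\lim_{t\to0}\alpha_t(\lambda_t(\xi_t))$ exists in $C^*_r(G)$; this is the one substantive hypothesis of Lemma~\ref{lem-embedding-from-limit-formula-and-mapping-cone-field}, and it is where the analytic content of Sections~\ref{sec-rescaling-automorphisms} and \ref{sec-limit-formula} has been absorbed. With this hypothesis in hand, Lemma~\ref{lem-embedding-from-limit-formula-and-mapping-cone-field} produces an embedding $A_0\to A$ given by $f_0\mapsto\lim_{t\to0}\alpha_t(\lambda_t(f_t))$, which by construction is precisely the Mackey embedding $\alpha$ of Definition~\ref{def-mackey-embedding}, together with an isomorphism of continuous fields from $\{C^*_r(G_t)\}$ to the mapping cone field $\operatorname{Cone}(\alpha)$ of Definition~\ref{def-mapping-cone-field} whose fiber maps are $\alpha_t\circ\lambda_t$ for $t\ne0$ and $\operatorname{id}$ for $t=0$. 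This is the assertion of the theorem. The only real work beyond citing results already established is the continuity check of the first step, which is routine; I do not anticipate a genuine obstacle here, since the heart of the matter — the existence of the limit defining $\alpha$ — was settled in Theorem~\ref{thm-the-limit-exists-1}, and the passage from ``the limit exists'' to ``the fiber isomorphisms assemble into an isomorphism of continuous fields onto the mapping cone'' is the abstract content of Lemma~\ref{lem-embedding-from-limit-formula-and-mapping-cone-field}.
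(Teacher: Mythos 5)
Your proposal is correct and follows essentially the same route as the paper: the paper's proof verifies the Dixmier criterion that image sections are continuous, with continuity away from $0$ dismissed as obvious and continuity at $0$ supplied by Theorem~\ref{thm-the-limit-exists-1}; your invocation of Lemma~\ref{lem-embedding-from-limit-formula-and-mapping-cone-field} is exactly that argument packaged abstractly (indeed the paper announces this as the intended strategy right after stating that lemma). The only difference is that you spell out the continuity check for $t\in(0,\infty)$ via the generating family and the joint continuity of $(t,a)\mapsto\alpha_t(a)$ for a strongly continuous one-parameter group of isometries, a routine point the paper elides.
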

  
\begin{proof} It suffices to show that for any continuous section $\{\xi_t\}$ of the continuous field $\{C^*_r(G_t)\}$, the image section of the mapping cone field is continuous; see  \cite[10.2.4]{Dixmier77}. But the  image section is $\{ \widehat \xi_t\}$, where 
\[
\widehat \xi_t =   
	\begin{cases} 
		\alpha_t(\lambda_t (\xi_t)) & t \ne 0 \\  
		\xi_0 & t=0 .
	\end{cases}
\]
This is obviously a continuous section of the mapping cone field away from $t{=}0$, and continuity at $t{=}0$ is proved using Theorem~\ref{thm-the-limit-exists-1} and the definition of $\alpha$.
\end{proof}
 
\subsection{The Connes-Kasparov isomorphism}
The Connes-Kasparov assembly map (first conjectured, and then proved to be an isomorphism)  was originally constructed using Dirac operators on the symmetric space $G/K$ and a Fredholm index  in $C^*$-algebra $K$-theory, as in  \cite{ConnesMoscovici82,Kasparov83}.  But Connes observed in \cite[Prop.9, p.141]{ConnesNCG} that the map may be identified with the bottom morphism in the following commuting diagram:
\begin{equation}
    \label{eq-c-k-morphism-from-continuous-field}
\xymatrix{
K_*(A_{[0,1]}) \ar[r]^{\varepsilon_1} \ar[d]_{\varepsilon _0}^{\cong} & K_*(A_1) \ar@{=}[d]
\\
K_*(A_0) \ar[r]_{\mathrm{CK}}&  K_*(A_1) .
}
\end{equation}
Here, $A_{[0,1]}$ denotes the $C^*$-algebra of continuous sections  over $[0,1]$ of the continuous field 
\[
\{\, A_t\, \} _{t \in \R} = \{\,C^*_r (G_t)\,\} _{t\in \R}
\]
of reduced group $C^*$-algebras  from Section~\ref{sec-cts-fields}.\footnote{At the same time, Connes pointed out how \eqref{eq-c-k-morphism-from-continuous-field} is connected to Mackey's idea of a correspondence between the irreducible unitary representations of $G$ and of $G_0$; this  played an  important role in reviving Mackey's proposal.}

The version of the Connes-Kasparov assembly map that appears in \eqref{eq-c-k-morphism-from-continuous-field} seems better suited to connections with the Mackey bijection and tempiric representations \cite{Higson08,AfgoustidisConnesKasparov,BraddHigsonYuncken24}.
In this section we shall develop Connes' observation a bit further by proving    the following result: 

\begin{theorem} 
\label{thm-connes-kasparov-characterization}
The Connes-Kasparov assembly map in \eqref{eq-c-k-morphism-from-continuous-field} is equal to the $K$-theory map 
\[
\alpha_{0,*}\colon K_*(A_0) \longrightarrow K_*(A_1)
\]
that is induced from the Mackey embedding $\alpha_0 \colon A_0 \longrightarrow A_1$.
\end{theorem}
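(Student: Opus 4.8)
The plan is to deduce this from the identification of the continuous field $\{C^*_r(G_t)\}$ with the mapping cone field of the Mackey embedding, which was the content of Section~\ref{sec-mapping-cone}, combined with homotopy invariance of $K$-theory. This is the real-group analogue of the argument carried out in \cite{HigsonRoman20}.

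Write $A_0 = C^*_r(G_0)$, $A_1 = C^*_r(G)$ and let $\alpha\colon A_0\to A_1$ denote the Mackey embedding. First I would introduce the $C^*$-algebra
\[
E=\bigl\{\,g\in C([0,1],A_1)\ :\ g(0)\in\alpha[A_0]\,\bigr\}
\]
together with the two homomorphisms $e_0\colon E\to A_0$, $g\mapsto\alpha^{-1}(g(0))$ (well defined because $\alpha$ is injective), and $\mathrm{ev}_1\colon E\to A_1$, $g\mapsto g(1)$. According to Definition~\ref{def-mapping-cone-field}, $E$ is exactly the $C^*$-algebra of continuous sections over $[0,1]$ of the mapping cone field of $\alpha$; hence the isomorphism of continuous fields from Section~\ref{sec-mapping-cone} restricts to an isomorphism $A_{[0,1]}\cong E$ carrying the evaluation $\varepsilon_0$ to $e_0$ and $\varepsilon_1$ to $\mathrm{ev}_1$. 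Here I would record that the field isomorphism is the identity on the fibers over $t=0$ and $t=1$: over $t=0$ it is $\operatorname{id}_{A_0}$ by construction, and over $t=1$ it is $\alpha_1\circ\lambda_1=\operatorname{id}_{A_1}$, since $\lambda_1=\operatorname{id}$ by Definition~\ref{def-lambda-t-isomorphism} and $\alpha_1=\operatorname{id}$ because $\mathscr{B}_{w,\nu,1}=\operatorname{id}$ in Definition~\ref{def-b-operator-and-alpha-w}. Consequently the Connes--Kasparov map of \eqref{eq-c-k-morphism-from-continuous-field} equals $\mathrm{ev}_{1,*}\circ e_{0,*}^{-1}$.

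The remaining work is soft. First, $e_{0,*}$ is an isomorphism because $\ker(e_0)=\{g\in E:g(0)=0\}=C_0((0,1],A_1)$ is a cone on $A_1$ and hence has vanishing $K$-theory; this is also the reason $\varepsilon_0$ is an isomorphism in \eqref{eq-c-k-morphism-from-continuous-field}. Second, the family $\mathrm{ev}_s\colon E\to A_1$, $s\in[0,1]$, is a homotopy of $*$-homomorphisms — it is the composite of the inclusion $E\hookrightarrow C([0,1],A_1)$ with evaluation at $s$ — joining $\mathrm{ev}_0=\alpha\circ e_0$ to $\mathrm{ev}_1$, so homotopy invariance of $K$-theory gives $\alpha_*\circ e_{0,*}=\mathrm{ev}_{1,*}$, that is, $\mathrm{ev}_{1,*}\circ e_{0,*}^{-1}=\alpha_*$. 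Combined with the previous paragraph, this identifies the Connes--Kasparov map with $\alpha_*$. I do not expect a serious obstacle here: once Section~\ref{sec-mapping-cone} is available, the only point requiring care is the bookkeeping of the evaluation homomorphisms under the field isomorphism — in particular the verification that this isomorphism restricts to the identity over the endpoints $t=0$ and $t=1$, so that $\varepsilon_0$ and $\varepsilon_1$ genuinely correspond to $e_0$ and $\mathrm{ev}_1$.
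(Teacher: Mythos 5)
Your proof is correct and follows essentially the same route as the paper's: both identify $\{C^*_r(G_t)\}$ with the mapping cone field of $\alpha$ (using that the field isomorphism is the identity on the fibers at $t=0,1$), and both then invoke homotopy invariance of $K$-theory — in the paper this appears as a square commuting up to homotopy and a commuting cube, while you spell out the explicit straight-line homotopy $\mathrm{ev}_s$ and the fact that $\ker(e_0)$ is a cone, but the underlying argument is the same.
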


For the proof, let  us write 
\[
\{ B_t \} _{t\in [0,1]} = \text{Mapping cone field for $\alpha_0\colon A_0 \longrightarrow A_1$}.
\]
As we have seen, there is an isomorphism of continuous fields 
\begin{equation}
    \label{eq-isomorphism-of-cts-fields-again}
\xymatrix{
\{ A_t \} _{t\in [0,1]} \ar[r]^{\cong}&  \{ B_t \} _{t\in [0,1]}
}
\end{equation}
given by the formula 
\begin{equation}
    \label{eq-isomorphism-of-cts-fields-again-fmla}
 a_t \longmapsto \begin{cases} \alpha_t(a_t) & t\ne 0 
\\
a_0 & t = 0 .
\end{cases}
\end{equation}
Notice that    $A_0 {=} B_0$ and $A_1 {=} B_1$, and that  on these fibers the isomorphisms in \eqref{eq-isomorphism-of-cts-fields-again-fmla} are  identity maps.

\begin{proof}[Proof of Theorem~\ref{thm-connes-kasparov-characterization}]
The diagram 
\[
\xymatrix{
B_{[0,1]} \ar[r]^{\varepsilon_1} \ar[d]_{\varepsilon _0} & B_1 \ar@{=}[d]
\\
B_0 \ar[r]_{\alpha_0}&  B_1
}
\]
is commutative up to homotopy, and therefore leads to an exactly commutative diagram
\begin{equation}
\label{eq-commuting-square-in-k-theory-for-b-field}
\xymatrix{
K_*(B_{[0,1]}) \ar[r]^{\varepsilon_{1,*}} \ar[d]_{\varepsilon _{0,*}} & K_*(B_1) \ar@{=}[d]
\\
K_*(B_0) \ar[r]_{\alpha_{0,*}}&  K_*(B_1) .
}
\end{equation}
Now, the isomorphism of continuous fields \eqref{eq-isomorphism-of-cts-fields-again} gives a commuting diagram
\begin{equation}
\label{eq-partial-commuting-cube}
\xymatrix@C=5pt@R=8pt{
& K_*(B_{[0,1]}) \ar[rr] \ar'[d][dd]
& & K_*(B_{1}) 
\\
K_*(A_{[0,1]}) \ar[ur] \ar[rr] \ar[dd] 
& &  K_*(A_{1}) \ar@{=}[ur] 
\\
& K_*(B_{0})  
& &  
\\
K_*(A_{0})  \ar@{=}[ur]
& &    
}
\end{equation}
in which the horizontal maps are induced from evaluation at $1$, while the vertical maps (which are isomorphisms) are induced from evaluation at $0$.  Putting \eqref{eq-partial-commuting-cube} together with \eqref{eq-c-k-morphism-from-continuous-field} and \eqref{eq-commuting-square-in-k-theory-for-b-field} we obtain the cubic diagram 
\[
\xymatrix@C=5pt@R=8pt{
& K_*(B_{[0,1]}) \ar[rr] \ar'[d][dd]
& & K_*(B_{1}) \ar@{=}[dd]
\\
K_*(A_{[0,1]}) \ar[ur]\ar[rr]\ar[dd]
& &  K_*(A_{1}) \ar@{=}[ur]\ar@{=}[dd]
\\
& K_*(B_{0}) \ar'[r]_-{\alpha_{0,*}}[rr]
& & K_*(B_{1})
\\
K_*(A_{0}) \ar[rr]_{\mathrm{CK}} \ar@{=}[ur]
& & K_*(A_{1}) \ar@{=}[ur]
}
\]
in which all faces commute. The theorem is proved.
\end{proof}

\subsection{A characterization of the Mackey bijection}
We conclude with our promised characterization of the Mackey bijection.

\begin{lemma} 
\label{lem-composition-of-pi-with-alpha-1}
Let $P=L N$ be a standard parabolic subgroup of $G$. Let $\nu \in \mathfrak{a}_P^*$, and assume that the centralizer of $\nu$ in $G$ is $L$.
Let $\tau$ be a tempiric representation of $L_P$,
and let
\[
\pi_{P,\tau,\nu} = \Ind_{P}^G e^{i\nu} \cdot \tau \colon G \longrightarrow U \bigl ( \Ind H_\tau \bigr ) 
\]
be the compact model of the irreducible representation that is unitarily parabolically induced from $e^{i\nu}\cdot  \tau$ \textup{(}notation from \eqref{eq-notation-e-to-the-i-nu-times-tau-1}; see Theorem~\textup{\ref{thm-afgoustidis-characterization-of-tempered-reps-with-fixed-im-inf-ch}} for the irreducibility of $\pi_{P,\tau,\nu}$\textup{)}. The composition
\[
C^*(G_0) \stackrel \alpha \longrightarrow C^*_r (G) \stackrel {\pi_{P,\tau,\nu}} \longrightarrow \mathfrak{K}(\Ind H_\tau)
\]
with the Mackey embedding in \eqref{eq-embedding-formula} is unitarily equivalent to the representation 
\[
\rho_{\tau\vert_{K\cap P},\nu} = \Ind_{(K{\cap}P)\ltimes \mathfrak{s}}^{K \ltimes \mathfrak{s}} \tau\vert_{K{\cap}L} \otimes e^{i \nu}
\]
of the group $G_0$.  On the right-hand side of the above formula,  the linear functional $\nu$ is extended  by zero on the orthogonal complement of $\mathfrak{a}_P$   to become a linear functional on $\mathfrak{s}$. 
\end{lemma}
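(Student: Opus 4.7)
The plan is to reduce the tempiric case to the cuspidal principal-series case handled by Theorem~\ref{thm-the-limit-exists-2}, by applying induction in stages on both the $G$-side and the $G_0$-side.

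First, I would realize $\tau$ as a direct summand of a cuspidally induced representation: since $\tau$ is tempiric on $L_P$, the Knapp--Stein theory applied to $L_P$ yields a cuspidal parabolic subgroup $Q_0\subseteq L_P$ with $L_{Q_0}=M_{Q_0}A_{Q_0}^{L_P}$ and a discrete series $\sigma$ of $M_{Q_0}$ such that $\tau$ is an irreducible summand of $\Ind_{Q_0}^{L_P}(\sigma\otimes 1)=\bigoplus_i\tau_i$, where the parameter on $A_{Q_0}^{L_P}$ is forced to vanish by the real-infinitesimal-character assumption. Setting $Q=Q_0 N_P$ defines a cuspidal parabolic subgroup of $G$ with $L_Q=L_{Q_0}$ and $\mathfrak{a}_Q^*=\mathfrak{a}_P^*\oplus(\mathfrak{a}_{Q_0}^{L_P})^*$. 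Writing $\nu_Q\in\mathfrak{a}_Q^*$ for the extension of $\nu$ by zero, Lemma~\ref{lem-induction-in-stages} together with the observation that $e^{i\nu}$ is a central character of $L_P$ that passes through parabolic induction in $L_P$ produces a $G$-equivariant decomposition
\[
\pi_{Q,\sigma,\nu_Q}\cong\bigoplus_i\pi_{P,\tau_i,\nu}
\]
with $\tau=\tau_1$, say. By Lemma~\ref{lem-induction-in-stages-in-compact-model}, this decomposition is realized in the compact model $L^2(K,H_\sigma)^{K\cap L_Q}$ by the orthogonal projections induced from the $\tau_i$-isotypical splitting of the inner space $L^2(K\cap L_P,H_\sigma)^{K\cap L_Q}$.

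Next, I would run the parallel argument on the motion-group side. Since $K\cap L_P\subseteq K_\nu$ (because $K\cap L_P$ centralizes $\mathfrak{a}_P$) and since $\nu_Q$ and $\nu$ agree as linear functionals on $\mathfrak{s}$ after extension by zero, induction in stages for $G_0=K\ltimes\mathfrak{s}$ yields
\[
\rho_{\sigma,\nu_Q}\cong\Ind_{K\cap L_P\ltimes\mathfrak{s}}^{K\ltimes\mathfrak{s}}\bigl((\Ind_{K\cap L_Q}^{K\cap L_P}\sigma|_{K\cap L_Q})\otimes e^{i\nu}\bigr)\cong\bigoplus_i\rho_{\tau_i,\nu},
\]
and the compact-model identification ensures that this splitting is implemented by the \emph{same} orthogonal projections in $L^2(K,H_\sigma)^{K\cap L_Q}$ as those appearing on the $G$-side above.

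Finally, Theorem~\ref{thm-the-limit-exists-2} applied to the cuspidal principal series $\pi_{Q,\sigma,\nu_Q}$ gives a unitary equivalence $\pi_{Q,\sigma,\nu_Q}\circ\alpha\simeq\rho_{\sigma,\nu_Q}$, implemented (up to the elementary unitary equivalence coming from $w^{-1}(\sigma)\simeq\sigma$) by conjugation with an adjusted Knapp--Stein operator. Since this operator is an intertwiner of $G$-representations, it is in particular $K$-equivariant for the common left-translation action on $\pi_{Q,\sigma,\nu_Q}|_K=\rho_{\sigma,\nu_Q}|_K$, and it therefore commutes with the $K$-invariant $\tau_i$-projections identified above. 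Restricting the equivalence to the $\tau$-summand delivers the desired $\pi_{P,\tau,\nu}\circ\alpha\simeq\rho_{\tau,\nu}$. The main obstacle will be the careful verification that the parabolic-side and motion-group-side decompositions are realized by the same orthogonal projections in the shared compact-model Hilbert space; once that matching is in place, the Knapp--Stein intertwiner from Theorem~\ref{thm-the-limit-exists-2} automatically preserves the $\tau$-summand and the conclusion follows by restriction.
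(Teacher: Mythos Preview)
Your strategy is the same as the paper's: embed $\tau$ as a summand of a cuspidally induced representation of $L_P$ and use induction in stages to realize $\pi_{P,\tau,\nu}$ as a summand of a cuspidal principal series $\pi_{Q,\sigma,\nu_Q}$ for $G$, then invoke the discrete-series case (Theorem~\ref{thm-the-limit-exists-2}). Your treatment of the parallel decomposition on the motion-group side, and your identification of the two decompositions via the \emph{same} orthogonal projections in the shared compact model, is more explicit than the paper's very terse ``the lemma now follows from the discrete series case already considered''.

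However, your final step contains a gap. You argue that the Knapp--Stein operator $\mathscr{A}'$ from Theorem~\ref{thm-the-limit-exists-2} commutes with the $\tau_i$-projections because $\mathscr{A}'$ is $K$-equivariant and the projections are ``$K$-invariant''. But $K$-equivariance of two operators does not force them to commute. The $\tau_i$-summands $\Ind H_{\tau_i}\subseteq \Ind H_\sigma$ are not, in general, sums of $K$-isotypical components: distinct tempiric summands of $\Ind_{Q_0}^{L_P}\sigma$ can share $(K\cap L_P)$-types, so a generic $K$-equivariant unitary on $\Ind H_\sigma$ may mix them. Concretely, under the Frobenius identification of $K$-commuting operators on $\Ind H_\sigma\cong L^2\bigl(K,\,L^2(K\cap L_P,H_\sigma)^{K\cap L_Q}\bigr)^{K\cap L_P}$ with $(K\cap L_P)$-equivariant endomorphisms of the inner space, your claim amounts to asserting that the inner endomorphism corresponding to $\mathscr{A}'$ preserves each $H_{\tau_i}$; $K$-equivariance only tells you that this inner endomorphism is $(K\cap L_P)$-equivariant, which is insufficient. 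So you have not established that the equivalence $\pi_{Q,\sigma,\nu_Q}\circ\alpha\simeq\rho_{\sigma,\nu_Q}$ restricts to $\pi_{P,\tau,\nu}\circ\alpha\simeq\rho_{\tau,\nu}$ on the $\tau$-summand. The paper does not spell out this step either; one way to close the gap is to note that the image of $\Ind H_\tau$ under $\mathscr{A}'^{\,*}$ is a $G_0$-subrepresentation of $\rho_{\sigma,\nu_Q}$ whose $K_\nu$-fiber (in the Mackey picture) is forced, by the $K$-equivariance you do have together with Frobenius reciprocity, to be $\tau|_{K_\nu}$, rather than trying to prove literal commutation with the projections.
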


\begin{proof} 
If $\tau$ is \emph{any} irreducible tempered unitary representation of $L_P$,  then there is a standard parabolic subgroup $Q{=}M_QA_QN_Q$ of $G$ with $Q\subseteq P$, a discrete series representation $\sigma$ of $M_Q$, some   $\mu\in \mathfrak{a}_Q^*$ and an embedding of representations  
\[
\tau\longrightarrow \Ind_{Q\cap L}^{L} \sigma \otimes \exp(i \mu)
\]
of the group $L$. This is a basic principle of Harish-Chandra, which is reflected in  Theorem~\ref{thm-structure-of-reduced-c=star-algebra-1}, when that theorem is applied to the real reductive group $L$ (note that the subgroups $Q {\cap} L\subseteq L$ are precisely the standard parabolic subgroups of $L$).
In our case,  since   $\tau$ has real infinitesimal character, the continuous parameter $\mu\in \mathfrak{a}_Q^*$ is $0$. Compare  \cite[(2.2.11)]{BraddHigsonYuncken24}.

By induction in stages, there is an embedding of unitary $G$-represen\-tations
\begin{equation}
    \label{eq-embedding-of-tau-into-principal-series}
\Ind_P^G e^{i\nu} {\cdot}  \tau \longrightarrow \Ind_Q^G  e^{i \nu}  {\cdot}\sigma .
\end{equation}
By Theorem~\ref{thm-limit-formula-for-representations}, the composition of the right-hand representation with the Mackey embedding is unitarily equivalent to the representation 
\[
\rho_{\sigma\vert_{K\cap Q},  \nu }\colon G _0 \longrightarrow U \bigl (L^2 (K, H_\sigma) ^{K\cap Q} \bigr ) .
\]
We need to compute the restriction of this representation to the image of the composition
\begin{equation}
    \label{eq-inclusion-of-ind-tau}
L^2 (K, H_\tau)^{K\cap P} \longrightarrow 
L^2 (K, L^2(K{\cap}P, H_\sigma)^{K\cap Q} )^{K\cap P}
\stackrel \cong  \longrightarrow L^2 (K, H_\sigma )^{K\cap Q} ,
\end{equation}
in which the first map is  induced from the embedding \eqref{eq-embedding-of-tau-into-principal-series} in the compact model, and the second is the induction in stages equivalence  \eqref{eq-induction-in-stages-formula}, given by evalution of functions in $L^2 (K{\cap}L, H_\sigma)^{K \cap Q}$ at $e\in H_\sigma$.  For this, we  use the following variation on induction in stages for the group $G_0$: if $K_2\subseteq K_1\subseteq K$, if $\sigma_2$ is a unitary representation of $K_2$,  if $\sigma_1= \Ind_{K_2}^{K_1} \sigma_2$, and if $\nu\in \mathfrak{s}^*$ is $K_1$-invariant, then the induction in stages unitary isomorphism 
\[
L^2 (K, L^2(K_1, H_{\sigma_2})^{K_2} )^{K_1}
\stackrel \cong  \longrightarrow L^2 (K, H_{\sigma_2} )^{K_2}
\]
is a unitary  equivalence of representations
    \[
    \Ind_{K_1\ltimes \mathfrak{s}}^{K\ltimes \mathfrak{s}} \sigma_1\otimes e^{ i \nu}
    \stackrel \cong \longrightarrow \Ind_{K_2\ltimes \mathfrak{s}}^{K\ltimes \mathfrak{s}}  \sigma_2\otimes e^{ i \nu}.
    \]
It follows that \eqref{eq-inclusion-of-ind-tau} is an inclusion of $\rho_{\tau\vert_{K\cap P},\nu} $ into $\rho_{\sigma\vert_{K\cap Q},\nu}$, as required.
\end{proof}

\begin{lemma} 
\label{lem-composition-of-pi-with-mackey-embedding-2}
Let $\pi$ be a tempered irreducible unitary representation of $G$. Fix $\nu\in \mathfrak{s} ^*$ so that   the imaginary part of the infinitesimal character of $\pi$ in the quotient 
\[
\mathfrak{a}^* / W(\mathfrak{g},\mathfrak{a} ) = \mathfrak{s}^*/K
\]
is represented by $\nu$.  The composition
\[
C^*(G_0) \stackrel \alpha \longrightarrow C^*_r (G) \stackrel \pi \longrightarrow \mathfrak{K}(H_\pi)
\]
is a direct sum of irreducible unitary representations   of $G_0$, all of which have the form $\rho_{\theta, \nu}$ from Definition~\ref{def-rho-theta-nu} for   $\theta\in \widehat K_\nu$, and for the given $\nu$.   Included among the direct summands is the irreducible representation of $G_0$ to which $\pi$ corresponds under the Mackey bijection of Afgoustidis, and this representation minimizes $\|\theta\|$ among all of the summands.
\end{lemma}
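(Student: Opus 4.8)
The plan is to deduce the lemma from Lemma~\ref{lem-composition-of-pi-with-alpha-1}, Vogan's classification of tempiric representations (Theorem~\ref{thm-vogan-classification-of-tempiric-reps}), and the construction of Afgoustidis's Mackey bijection recalled in Section~\ref{sec-mackey-bijection}. First I would invoke Theorem~\ref{thm-afgoustidis-characterization-of-tempered-reps-with-fixed-im-inf-ch}. After replacing the chosen representative $\nu$ by a $K$-conjugate (harmless, since both sides of the assertion are constrained only up to unitary equivalence) we may assume $\nu\in\mathfrak{a}^*$, and we let $P=L_PN_P$ be the standard parabolic subgroup of $G$ whose Levi factor $L_P$ is the centralizer of $\nu$ in $G$. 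Then $\nu\in\mathfrak{a}_P^*$, the isotropy group $K_\nu$ of $\nu$ in $K$ equals $K\cap L_P$, and there is a tempiric representation $\tau$ of $L_P$, unique up to equivalence, with $\pi\simeq\Ind_P^G e^{i\nu}\cdot\tau$.

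Next I would apply Lemma~\ref{lem-composition-of-pi-with-alpha-1}, which identifies the $C^*(G_0)$-representation $\pi\circ\alpha$ with the integrated form of the unitary representation $\rho_{\tau,\nu}=\Ind_{K_\nu\ltimes\mathfrak{s}}^{K\ltimes\mathfrak{s}}\tau\vert_{K_\nu}\otimes e^{i\nu}$ of $G_0$. Decomposing the restriction into $K_\nu$-isotypical pieces, $\tau\vert_{K_\nu}\cong\bigoplus_j m_j\,\theta_j$ with the $\theta_j\in\widehat K_\nu$ distinct and $m_j\geq 1$, and using that the induction functor $\Ind_{K_\nu\ltimes\mathfrak{s}}^{K\ltimes\mathfrak{s}}$ commutes with direct sums, one obtains $\rho_{\tau,\nu}\cong\bigoplus_j m_j\,\rho_{\theta_j,\nu}$. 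By Mackey's irreducibility theorem (recalled just after Definition~\ref{def-rho-theta-nu}) each $\rho_{\theta_j,\nu}$ is irreducible, and it has the asserted form $\rho_{\theta,\nu}$ with $\theta\in\widehat K_\nu$ and the given $\nu$; this proves the first assertion of the lemma.

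For the remaining assertions I would apply Vogan's Theorem~\ref{thm-vogan-classification-of-tempiric-reps} to the real reductive group $L_P$ with maximal compact subgroup $K\cap L_P=K_\nu$: the tempiric representation $\tau$ of $L_P$ has a unique $K_\nu$-type $\theta_0$ of minimal norm, occurring in $\tau$ with multiplicity one. Hence the summand $\rho_{\theta_0,\nu}$ of $\pi\circ\alpha$ has multiplicity one and minimizes $\|\theta\|$ among the summands $\rho_{\theta_j,\nu}$. Finally, by the description of Afgoustidis's Mackey bijection in Section~\ref{sec-mackey-bijection} — which sends the class of $\rho_{\theta_0,\nu}$ to the class of $\Ind_P^G e^{i\nu}\cdot\tau$ precisely when $\theta_0$ is the minimal $K_\nu$-type of the tempiric representation $\tau$ of the Levi factor $L_P$ centralizing $\nu$ — the representation $\rho_{\theta_0,\nu}$ is exactly the one that corresponds to $\pi$ under the Mackey bijection. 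Assembling these facts yields the lemma.

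I expect the main difficulty to be bookkeeping rather than substance. One must make sure that the group $K_\nu$ occurring in Mackey's parametrization of $\widehat G_0$ coincides, on the nose, with the compact group $K\cap L_P$ to which Vogan's theorem is applied, and that the norm on $\widehat K_\nu$ that selects the minimal $K_\nu$-type of $\tau$ is the one implicit in the statement of the lemma. Both points become immediate once $P$ is chosen so that $L_P$ is the centralizer of $\nu$: then $K\cap L_P$ is precisely the $K$-stabilizer of $\nu\in\mathfrak{s}^*$, and, by the remark following Definition~\ref{def-norm-of-a-rep-of-k}, the notion of minimal $K_\nu$-type is insensitive to the auxiliary choices that enter that definition.
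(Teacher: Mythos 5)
Your proposal is correct and follows essentially the same route as the paper's proof: apply Theorem~\ref{thm-afgoustidis-characterization-of-tempered-reps-with-fixed-im-inf-ch} to write $\pi\simeq\Ind_P^G e^{i\nu}\cdot\tau$ with $L_P$ the centralizer of $\nu$, invoke Lemma~\ref{lem-composition-of-pi-with-alpha-1}, decompose $\tau\vert_{K\cap L_P}$ into irreducibles and push through the induction functor, then appeal to Vogan's uniqueness of the minimal $K_\nu$-type and the definition of Afgoustidis's bijection. You are somewhat more explicit than the paper about the identification $K_\nu = K\cap L_P$ and about citing Theorem~\ref{thm-vogan-classification-of-tempiric-reps} by name, but these are bookkeeping points that the paper leaves implicit rather than differences of method.
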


\begin{proof} 
If $\ImInfCh(\pi)\in \mathfrak{s}^*/K$ is represented by $\nu \in \mathfrak{s} 
^*$, then according to Theorem~\ref{thm-afgoustidis-characterization-of-tempered-reps-with-fixed-im-inf-ch}, $\pi$ may be taken to be a parabolically induced representation as in Lemma~\ref{lem-composition-of-pi-with-alpha-1} above.  According to that lemma, the composition of $\pi$ with the Mackey embedding is then the representation 
\[
 \Ind_{K_\nu\ltimes \mathfrak{s}}^{K \ltimes \mathfrak{s}} \tau\vert_{K{\cap} P} \otimes e^{i \nu}. 
\]
If we decompose $\tau\vert _{K{\cap} P}$ into irreducible subrepresentations, 
\[
\tau\vert _{K{\cap} P} = \theta_1 \oplus \theta_2 \oplus \cdots ,
\]
then there is a corresponding direct sum decomposition 
\[
\Ind_{K_\nu\ltimes \mathfrak{s}}^{K \ltimes \mathfrak{s}} \tau\vert_{K{\cap} P} \otimes e^{i \nu} = \rho_{\theta_1,\nu} \oplus 
\rho_{\theta_2,\nu}\oplus \cdots ,
\]
The lemma follows from this (for the assertions about the representation corresponding to $\pi$ under the Mackey bijection, recall that this is by definition $\rho_{\theta, \nu}$, where $\theta$ is the unique minimal $K$-type of $\tau$). 
\end{proof}

\begin{theorem}[\emph{c.f.} {\cite[Thm.~5.1.2]{HigsonRoman20}}]
\label{thm-characterization-of-mackey-bijection}
There is a unique bijection 
\[
\mu \colon \Gtemp  \longrightarrow \widehat G_0
\]
such that for every $\pi  \in \Gtemp$, the element $\mu(\pi )\in \widehat G_0$ may be realized as a unitary subrepresentation of the representation
\[
C^*(G_0) \stackrel \alpha \longrightarrow C^*_r (G) \stackrel \pi \longrightarrow \mathfrak{K}(H_\pi) .
\]
\end{theorem}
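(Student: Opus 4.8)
The plan is to prove \emph{existence} by exhibiting the inverse of Afgoustidis's Mackey bijection as a map with the stated property, and to prove \emph{uniqueness} by a well-foundedness argument built on the minimality clause of Lemma~\ref{lem-composition-of-pi-with-mackey-embedding-2}. For existence, let $\mu\colon \widehat G_r \to \widehat G_0$ be the inverse of the Mackey bijection of Afgoustidis (using the identification of $\widehat G_r$ with the tempered dual). Lemma~\ref{lem-composition-of-pi-with-mackey-embedding-2} says precisely that, for every $\pi\in\widehat G_r$, the representation $\mu(\pi)$ is realized as a subrepresentation of the composition $\pi\circ\alpha$, so $\mu$ works. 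For later use I would record the sharper facts from Lemmas~\ref{lem-composition-of-pi-with-alpha-1} and~\ref{lem-composition-of-pi-with-mackey-embedding-2}: writing $\nu\in\mathfrak s^*$ for a representative of $\ImInfCh(\pi)$ and $L$ for the centralizer of $\nu$, so that $\pi\simeq\Ind_P^G e^{i\nu}\cdot\tau$ for a tempiric representation $\tau$ of $L$, the representation $\pi\circ\alpha$ is the full direct sum $\bigoplus_i \rho_{\theta_i,\nu}$ with the $\theta_i$ running over the $K_\nu = K{\cap}L$-types of $\tau$; and $\mu(\pi)=\rho_{\theta_0,\nu}$, where $\theta_0$ is the minimal $K_\nu$-type of $\tau$, which by Vogan's Theorem~\ref{thm-vogan-classification-of-tempiric-reps} is the \emph{unique} $K_\nu$-type of $\tau$ of minimal norm.

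For uniqueness, suppose $\mu'\colon \widehat G_r\to\widehat G_0$ is any bijection with the stated property, and form the self-bijection $\phi = \mu'\circ\mu^{-1}$ of $\widehat G_0$. Because every irreducible constituent of $\pi\circ\alpha$ carries the same parameter $\nu=\ImInfCh(\pi)$, and because $\mu$ matches $\rho_{\theta,\nu}$ with a tempered representation of imaginary infinitesimal character $[\nu]$, both $\mu$ and $\mu'$ carry $\{\pi\in\widehat G_r:\ImInfCh(\pi)=[\nu]\}$ onto the ``fiber'' $\{\rho_{\theta,\nu}:\theta\in\widehat K_\nu\}$ of the decomposition of $\widehat G_0$ in Theorem~\ref{thm-final-mackey-description-of-motion-group-dual}. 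Hence $\phi$ preserves each fiber, and it suffices to show $\phi$ restricts to the identity on each. Fix $[\nu]$ and a representative $\nu$, and for $\rho=\rho_{\theta,\nu}$ in this fiber set $\|\rho\|=\|\theta\|$; this is well defined on equivalence classes, since the norm on irreducible representations of $K_\nu$ is conjugation invariant, and since for a fixed $\nu$ the assignment $\theta\mapsto\rho_{\theta,\nu}$ is injective.

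Now, given $\rho$ in the fiber, put $\pi=\mu^{-1}(\rho)$. By the facts recorded above, $\rho$ is the unique minimal-norm constituent of $\pi\circ\alpha$, whereas $\phi(\rho)=\mu'(\pi)$ is an irreducible subrepresentation of the \emph{same} representation $\pi\circ\alpha$ and hence coincides with one of its constituents $\rho_{\theta_i,\nu}$; therefore $\|\phi(\rho)\|\ge\|\rho\|$. Only finitely many irreducible representations of the compact group $K_\nu$ have norm below any given bound (this is immediate from Definition~\ref{def-norm-of-a-rep-of-k}), so each ``ball'' $\{\rho:\|\rho\|\le R\}$ in the fiber is finite; since $\phi$ is a bijection with $\|\phi(\rho)\|\ge\|\rho\|$, the $\phi$-preimage of such a ball is contained in it, hence equal to it by finiteness, so $\phi$ preserves every ball and therefore preserves norms exactly: $\|\phi(\rho)\|=\|\rho\|$. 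Thus $\phi(\rho)$ is a constituent of $\mu^{-1}(\rho)\circ\alpha$ whose norm equals the minimal norm among all constituents; by the uniqueness of the minimal $K_\nu$-type in Theorem~\ref{thm-vogan-classification-of-tempiric-reps} there is exactly one such constituent, namely $\rho$ itself, so $\phi(\rho)=\rho$. Hence $\phi=\mathrm{id}$ and $\mu'=\mu$.

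The step I expect to require the most care is the uniqueness argument: existence is essentially a restatement of Lemma~\ref{lem-composition-of-pi-with-mackey-embedding-2}, but uniqueness genuinely needs the minimality clause of that lemma together with Vogan's uniqueness of the minimal $K$-type, and one must check that the norm descends to the quotient $\widehat G_0\cong\bigl(\bigsqcup_\nu\widehat K_\nu\bigr)/W$ and that each fiber has finite norm-balls so that the well-foundedness argument applies.
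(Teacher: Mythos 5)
Your proof is correct and takes essentially the same approach as the paper's: both prove uniqueness by showing that the constraint from Lemma~\ref{lem-composition-of-pi-with-mackey-embedding-2} yields a norm inequality, that finiteness of norm-balls for irreducible representations of the compact group forces norm preservation, and that Vogan's uniqueness of the minimal $K$-type (Theorem~\ref{thm-vogan-classification-of-tempiric-reps}) then pins the bijection down. The only difference is organizational: you compare two candidate bijections by forming $\phi = \mu'\circ\mu^{-1}$ on $\widehat G_0$ and showing $\phi$ is the identity on each fiber, while the paper analyzes a single candidate directly via the norm-decreasing self-map $\min\circ\mu_\nu^{-1}$ of $\widehat{K\cap L}$.
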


\begin{proof}
It was noted in Lemma~\ref{lem-composition-of-pi-with-mackey-embedding-2} that Afgoustidis's Mackey bijection (or rather, its inverse) has the property above, so it remains to prove uniqueness.   

It follows from Lemma~\ref{lem-composition-of-pi-with-mackey-embedding-2} any bijection with the property in the statement of the theorem must preserve the imaginary part of the infinitesimal character.  So, fixing   $\nu\in \mathfrak{a}^*$, defining $L$ to be the centralizer of $\nu$ in $G$, and $P$ the standard  parabolic subgroup with Levi factor $L_P{=}L$,  and taking into account Theorem~\ref{thm-afgoustidis-characterization-of-tempered-reps-with-fixed-im-inf-ch}, it must determine a bijection    
\[
\mu _\nu \colon 
  \widehat L_{\mathrm{tempiric}} 
\stackrel \cong \longrightarrow \widehat {K{\cap}L}
\]
defined by 
\[
\mu \colon \Ind_P^G e^{i\nu}\cdot \tau  \longmapsto \rho_{\mu_\nu(\tau) , \nu}.
\]
We need to show that the bijection $\mu_\nu$ maps $\tau$ to its unique minimal  $(K{\cap}L)$-type.

Now it follows from Lemma~\ref{lem-composition-of-pi-with-mackey-embedding-2} that the bijection $\mu_\nu $   has the property that 
\[
\| \mu_\nu    ( \tau  )\|  \ge \| \min ( \tau)\|  \qquad \forall\, \tau \in \widehat L_{\mathrm{tempiric}},
\]
where $\min ( \tau)$ is the unique minimal $K_\nu$-type of $\tau$, and so of course the inverse bijection 
has the property that 
\[
  \| \theta\|  \ge \| \min (\mu_\nu^{-1}   ( \theta   )) \|  
\qquad \forall\, \theta \in  \widehat {K{\cap}L}.
\]
That is, the composition of bijections 
\[
\xymatrix@C=30pt{
\widehat {K{\cap}L}   \ar[r]^-{\mu_\nu^{-1}} 
&\widehat L_{\mathrm{tempiric}}
\ar[r]^-{\min} &
\widehat {K{\cap}L}
}
\]
is norm-decreasing.  Since the set of $\theta$ with norm less than or equal to any given $C$ is finite,   it follows from this that  the composition above is actually norm preserving.  Since $\mu_\nu(\tau)$ must be a $(K{\cap}L)$-type of $\tau$, and since $\tau$ has a unique minimal $(K{\cap}L)$-type, it follows that $\mu_\nu(\tau)$ must be that minimal $(K{\cap}L)$-type, as it is for Afgoustidis's bijection.
\end{proof}

\begin{remark} 
Afgoustidis and Aubert proved in \cite{AfgoustidisAubert21} that the Mackey bijection is a continuous map 
in the direction $\mu\colon \Gtemp\longrightarrow \widehat G_0$.  This may be proved using Theorem~\ref{thm-characterization-of-mackey-bijection} and the additional fact that the Mackey bijection preserves minimal $K$-types \cite[Prop.\,4.1]{
AfgoustidisMackeyBijection}.
\end{remark}

\bibliographystyle{alpha}
\bibliography{biblio}

\end{document}